\documentclass[12pt,twoside]{amsart}

\usepackage[T1]{fontenc}
\usepackage[utf8]{inputenc}
\usepackage{lmodern}
\usepackage{amsmath,amssymb,amsthm,mathtools}
\usepackage{booktabs}
\usepackage{enumitem}
\usepackage{microtype}
\usepackage[a4paper,margin=2.6cm,top=3cm,bottom=3cm]{geometry}
\usepackage[colorlinks=true,linkcolor=black,citecolor=black,urlcolor=black]{hyperref}

\numberwithin{equation}{section}
\newtheorem{theorem}{Theorem}[section]
\newtheorem{proposition}[theorem]{Proposition}
\newtheorem{lemma}[theorem]{Lemma}
\newtheorem{corollary}[theorem]{Corollary}
\theoremstyle{definition}
\newtheorem{definition}[theorem]{Definition}

\theoremstyle{remark}
\newtheorem{remark}[theorem]{Remark}

\title[Characterization of Sobolev regularity of plurisubharmonic functions]
{Characterization of Sobolev regularity of plurisubharmonic functions}

\subjclass[2020]{32U05, 32S45, 46E35}
\keywords{plurisubharmonic function, holomorphic singularity, Sobolev regularity, log-resolution, level-set estimate, Calder\'on--Zygmund theory}

\author{Hongrong Chen}
\address{School of Mathematics (Zhuhai), Sun Yat-Sen University, Zhuhai, Guangdong 519082, P. R. China}
 \email{chenhr73@mail2.sysu.edu.cn}

 \author{Guokuan Shao}
\address{School of Mathematics (Zhuhai), Sun Yat-Sen University, Zhuhai, Guangdong 519082, P. R. China}
\thanks{Guokuan Shao is partially supported by National Key R\& D Program of China No. 2024YFA1015200 and National Natural Science Foundation of China (Grant No. 12001549).}
 \email{shaogk@mail.sysu.edu.cn}

 \author{Wenxuan Wang}
\address{Academy of Mathematics and Systems Science, Chinese Academy of Sciences,
No. 55, Zhongguancun East Road, Haidian District, Beijing 100190, P. R. China}
\email{wxwang@amss.ac.cn}

\thanks{}

\begin{document}

\begin{abstract}
Let $f$ be a nonzero holomorphic germ at $0 \in \mathbb C^n$ with $f(0)=0$, and let $\chi$ be a $C^2$ non-decreasing convex function on the left half-line. We establish sharp necessary and sufficient conditions for the local Sobolev regularity of the plurisubharmonic function
$v=\chi(\log|f|).$ The criteria for the $L^p$-integrability of the classical Laplacian and for $W^{1,p}$-regularity are given by a weighted integral involving $\chi''$ and $\chi'$, respectively, and depend on $f$ only through the smallest multiplicity of $\operatorname{Div}(f)$. We also obtain the $W^{2,p}_{\mathrm{loc}}$ criterion for $1<p<\infty$. At the endpoint $p=1$, we prove that
\[
v\in W^{2,1}_{\mathrm{loc}}
\quad\Longleftrightarrow\quad
\chi'\in L^1((-\infty,A)),
\]
equivalently, $v$ is locally bounded. As applications, we obtain counterexamples to Calder\'on-Zygmund theory and characterize a class of functions in the local Monge--Amp\`ere domain. 
\end{abstract}

\maketitle

\tableofcontents

\section{Introduction}\label{sec:intro}

The Sobolev regularity of  plurisubharmonic functions is a natural problem in several complex variables. Even for plurisubharmonic functions with explicitly prescribed analytic singularities, the integrability of their first and second derivatives can be sensitive to the precise form of the singularity. A particularly basic class is obtained by composing logarithmic holomorphic singularities with convex functions. Namely, if $f$ is a nonzero holomorphic function and $\chi$ is convex and non-decreasing, then
$v=\chi(\log|f|)$ is plurisubharmonic after taking its trivial extension across the zero set of $f$. The regularity of $v$ is governed by two different things: the behavior of $\chi$ and the local geometry of the zero divisor of $f$. This leads naturally to the problem of determining sharp conditions on $\chi$ which characterize the Sobolev regularity of $\chi(\log|f|)$, and of understanding how these conditions depend on the multiplicities and singularities of $\operatorname{Div}(f)$.

A direct motivation for this problem comes from the endpoint of Calder\'on--Zygmund theory. For $1<p<\infty$, local $L^p$-regularity of the distributional Laplacian yields local $W^{2,p}$-regularity, whereas the corresponding implication fails at $p=1$; see, for instance, \cite{GilbargTrudinger}. Recently, Pan-Shao-Wang-Wu \cite{PSWW} showed that this endpoint failure occurs naturally for logarithmic compositions of holomorphic functions. They established energy estimates for level sets of holomorphic functions and constructed functions of the form $\log(-\log|f|)$ whose distributional Laplacians belong to $L^1_{\mathrm{loc}}$ while their full second derivatives do not. They also exhibited closely related functions $\frac{1}{\log(-\log|f|)}$  which regain the $W^{2,1}_{\mathrm{loc}}$ regularity. In complex dimension one, Chen-Shao-Wu-Xia  \cite{CSWX} obtained precise energy asymptotics and counterexample for endpoint $p=\infty$ . These results suggest that the different behaviors observed in such examples should be governed by a general condition on the outer function $\chi$, rather than by the particular logarithmic expressions used in their construction. The purpose of the present paper is to give such a characterization in a general setting. 

Let $U\subset\mathbb{C}^n$ be a neighborhood of the origin. Consider a non-constant holomorphic function $f\in\mathcal{O}(U)$ with $f(0)=0$.  Fix $A\in\mathbb{R}$ and a
non-decreasing convex function $\chi(s)\in C^2((-\infty,A])$.  After shrinking
$U$, assume that $\log|f|\le A$.  On
$U\setminus Z(f)$, where $Z(f)=\{f=0\}\neq\emptyset$, set
\begin{equation}\label{eq:def-v-intro}
v:=\chi(\log|f|).
\end{equation}
The limit of $\chi$ as $s\to -\infty$, possibly equal to $-\infty$, gives the
canonical upper-semicontinuous extension of $v$ across
$Z(f)$.  The extended function is plurisubharmonic and hence locally
integrable \cite{Demailly,Hormander}.

The germ of $f$ factors in the unique factorization ring
$\mathcal{O}_{\mathbb{C}^n,0}$ as
\begin{equation}\label{eq:factorization}
f=u\prod_{j=1}^{N}h_j^{m_j},
\end{equation}
where $u$ is a unit, the $h_j$ are distinct irreducible germs,
and $m_j\ge1$.  We write
\[
m_0=\min_{1\le j\le N}m_j
\]
for the smallest multiplicity of $\operatorname{Div}(f)$. After shrinking $U$, we choose representatives of
$u,h_1,\ldots,h_N$ such that \eqref{eq:factorization} holds throughout $U$.
Away from $Z(f)$, the classical Laplacian of $v$ is
\[
G=\chi''(\log|f|)\,|\nabla\log|f||^2.
\]
$G$ denotes its trivial extension by zero on $Z(f)$ throughout this paper.  The first main
theorem gives both the sharp integrability threshold for $G$ and characterization of
the distributional Laplacian of $v$.

\begin{theorem}
\label{thm:laplace-intro}
For every $1\le p<\infty$, one has
\begin{equation}\label{eq:laplace-criterion-intro}
G\in L^p_{\mathrm{loc}}(U)
\quad\Longleftrightarrow\quad
\int_{-\infty}^{A}|\chi''(s)|^p
   e^{\frac{2(1-p)}{m_0}s}\,\mathrm{d} s<\infty.
\end{equation}
Let
$\ell_\chi=\lim_{s\to-\infty}\chi'(s),$
which exists and is nonnegative.  If $\mu_f$ denotes the positive divisor
measure normalized by $\Delta\log|f|=2\pi\mu_f,$ 
then the distributional Laplacian on $U$ is
\begin{equation}\label{eq:full-distribution-laplace-intro}
\Delta v=G+2\pi\ell_\chi\mu_f.
\end{equation}
\end{theorem}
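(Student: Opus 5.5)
The plan is to reduce both statements to the distribution of $\log|f|$ with respect to a suitably weighted Lebesgue measure. Coarea-type identities and a log-resolution will serve as the tools. Set $\phi=\log|f|$. Then $|\nabla\phi|^2=|\nabla f|^2/|f|^2$, and
\[
\int_K G^p\,\mathrm{d}V=\int_K \chi''(\phi)^p\,|\nabla\phi|^{2p}\,\mathrm{d}V .
\]
By the coarea formula this equals
\[
\int_{-\infty}^{A}\chi''(s)^p\,I_p(s)\,\mathrm{d}s,\qquad I_p(s)=\int_{\{\phi=s\}\cap K}|\nabla\phi|^{2p-1}\,\mathrm{d}\sigma .
\]
Everything therefore reduces to the two-sided asymptotic
\[
I_p(s)\asymp e^{\frac{2(1-p)}{m_0}s}\qquad (s\to-\infty)
\]
on a small ball $K$ around $0$, or more precisely on every neighbourhood of $0$, with constants that depend on the neighbourhood.

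\textbf{Upper bound.} Away from the singular locus of $Z(f)_{\mathrm{red}}$, near a smooth point of $\{h_j=0\}$ we may write $f=u\,h_j^{m_j}$ with $h_j$ a coordinate $z_1$. There $|\nabla\phi|\approx m_j/|z_1|$, and the level set $\{|z_1|\approx e^{s/m_j}\}$ has area $\approx e^{s/m_j}$. This gives $I_p\approx e^{s/m_j}e^{-(2p-1)s/m_j}=e^{2(1-p)s/m_j}$, and the worst case is $m_j=m_0$. Near singular points I would take a log-resolution $\pi:X\to U$ so that $f\circ\pi$ is locally a unit times a monomial $\prod w_i^{a_i}$, and write the integral upstairs with the Jacobian $|J_\pi|^2=\prod|w_i|^{2k_i}$. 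One then has to show that exceptional divisors contribute no more than $e^{2(1-p)s/m_0}$. Rather than a pointwise gradient bound, I would use the cruder estimate $|\nabla\phi|\lesssim |\nabla f|/|f|$ together with the Łojasiewicz inequality $|\nabla f|\ge c|f|^{1-\theta}$, or else estimate $\int_{\{e^{s-1}<|f|<e^{s}\}}|\nabla\phi|^{2p}$ directly in monomial charts. This chart computation is a sum of integrals of the form
\[
\int\prod|w_i|^{2k_i}\Big(\sum a_i/|w_i|\Big)^{2p}\,\mathbf 1\{|w^a|\sim e^s\}.
\]
Bounding it by $e^{2(1-p)s/m_0}$ is the main obstacle, since the exceptional divisors have $a_i\ge m_0$ and $k_i\ge1$, and one must check that the positive discrepancies $k_i$ compensate. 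When $p=1$ the claim is simply that the Lebesgue measure of $\{|f|<r\}$ is comparable to $\int_{\{|f|<r\}}|\nabla\phi|^2$ in the appropriate sense; this case is simpler because $\Delta\phi=0$ off $Z(f)$ and Green's formula can be used.

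\textbf{Lower bound.} Restrict to a small ball around a smooth point of $\{h_{j_0}=0\}$ with $m_{j_0}=m_0$, close to $0$. The local model computed above gives $I_p(s)\gtrsim e^{2(1-p)s/m_0}$. Since the criterion involves only the tail $s\to-\infty$, and $\chi''$ is bounded on compact pieces of $(-\infty,A]$, both directions of the criterion follow.

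\textbf{Distributional Laplacian.} Consider the truncations $v_k=\chi_k(\phi)$, where $\chi_k$ agrees with $\chi$ on $[-k,A]$ and is extended linearly with slope $\chi'(-k)$ below $-k$. For a test function $\psi$ we have
\[
\langle\Delta v_k,\psi\rangle=\int\chi''(\phi)\mathbf 1_{\{\phi>-k\}}|\nabla\phi|^2\psi+\chi'(-k)\,2\pi\int\psi\,\mathrm{d}\mu_f ,
\]
where the last term uses $\Delta\phi=2\pi\mu_f$ and the fact that the linear part contributes $\chi'(-k)\Delta\phi$ on $\{\phi<-k\}$. The boundary terms between the two regions cancel because $\chi_k$ is $C^1$. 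Then $v_k\to v$ in $L^1_{\mathrm{loc}}$, by monotone convergence of psh functions, and $\chi'(-k)\to\ell_\chi$. For the first term, $G$ is $L^1_{\mathrm{loc}}$: the $p=1$ criterion requires $\int\chi''<\infty$, which holds since $\chi'$ is monotone and bounded below by $0$. Dominated convergence then gives \eqref{eq:full-distribution-laplace-intro}.
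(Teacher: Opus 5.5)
Your necessity argument (restricting to a smooth point of a component of multiplicity $m_0$ and using the local model) is the same as the paper's. Your treatment of the distributional Laplacian is correct. The gap is in the sufficiency direction for $p>1$. You need the upper bound $I_p(s)\lesssim e^{2(1-p)s/m_0}$ near the singular locus of $Z(f)$, which you flag as the main obstacle and do not establish.

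Neither tool you propose works as stated. The {\L}ojasiewicz inequality $|\nabla f|\ge c|f|^{1-\theta}$ is a \emph{lower} bound on the gradient, whereas you need an \emph{upper} bound on $|\nabla\phi|^{2p-1}$ along $\{\phi=s\}$. The chart integral $\int\prod|w_i|^{2k_i}(\sum a_i/|w_i|)^{2p}$ replaces $|\nabla_z\phi|\circ\pi$ by the $w$-gradient of $\log|w^a|$. The correct expression involves $(D\pi)^{-1}=\operatorname{adj}(D\pi)/\operatorname{jac}(\pi)$. A crude bound then gives an extra factor $|\operatorname{jac}(\pi)|^{-2p}$, so the weight becomes $|\operatorname{jac}(\pi)|^{2-2p}$, and for $p>1$ the discrepancies work against you rather than compensating.

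Your $p=1$ remark is also off. Already for $f=z_1$ one has $\int_{\{|f|<r\}}|\nabla\phi|^2\,\mathrm{d}V=\infty$, so it cannot be comparable to a Lebesgue measure. In fact $p=1$ is trivial: $\int\chi''=\chi'(A)-\ell_\chi<\infty$ always, and $G\,\mathrm{d}V\le\Delta v$ by subharmonicity.

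The missing idea, which makes the resolution unnecessary for this theorem, is the pointwise bound $|\partial f|\le C|f|^{1-1/m_0}$ near $0$. It follows from $f=u\prod h_j^{m_j}$ by logarithmic differentiation, $|\partial f|/|f|\le C(1+\sum_j|h_j|^{-1})$, together with $|f|^{1/m_0}\le C|h_j|^{m_j/m_0}\le C|h_j|$. Hence $|\nabla\phi|\le Ce^{-s/m_0}$ on $\{\phi=s\}$, and so $I_p(s)\le Ce^{2(1-p)s/m_0}I_1(s)$.

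Everything then reduces to the uniform flux bound $I_1(s)=O(1)$, i.e.\ $\int_{K\cap\{|f|=r\}}|\partial f|\,\mathrm{d}S\le Cr$. The paper gets this from continuity of the fibre volumes $t\mapsto\operatorname{Vol}(\{f=t\}\cap B)$. Your own Green's-formula idea also gives it directly. Take a cutoff $0\le\psi\le1$ equal to $1$ on $K$. Harmonicity of $\phi$ on $\{\phi>s\}$ yields $\int_{\{\phi=s\}}\psi|\nabla\phi|\,\mathrm{d}\sigma=-\int_{\{\phi>s\}}\nabla\psi\cdot\nabla\phi\,\mathrm{d}V$. This is bounded independently of $s$ because $\nabla\log|f|\in L^1_{\mathrm{loc}}$.

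For the distributional Laplacian you take a genuinely different and cleaner route than the paper. The paper first treats $\ell_\chi=0$ by Green's formula on $\{|f|>\varepsilon\}$. It kills the boundary terms with the area bound $\mathcal H^{2n-1}(\{|f|=\varepsilon\})=O(\varepsilon^\gamma)$ from \cite{PSWW} and the flux bound, then subtracts $\ell_\chi s$ and applies Poincar\'e--Lelong. Your $C^1$ linear truncation makes $\chi_k(\phi)-\chi'(-k)\phi$ constant near $Z(f)$ and $C^{1,1}$. So no boundary or area estimates are needed, only Poincar\'e--Lelong.

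Two small points. First, $v_k\uparrow v$, since tangent lines taken further left lie higher. Second, you do not need the $p=1$ criterion at all. Pairing your identity with $\psi\ge0$ and using monotone convergence gives $\int G\psi\le\int v\Delta\psi$, so $G\in L^1_{\mathrm{loc}}$ comes out of the same computation.
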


Theorem~\ref{thm:laplace-intro} also explains the contribution of the zero divisor to the distributional Laplacian.  The condition in \eqref{eq:laplace-criterion-intro} characterizes the local $L^p$-integrability of the classical Laplacian $G$ on $U\setminus Z(f)$, while the term $2\pi\ell_\chi\mu_f$ is the singular part supported on $\operatorname{Div}(f)$.  Consequently, the distributional Laplacian $\Delta v$ is represented by an $L^p_{\mathrm{loc}}$-function if and only if $\ell_\chi=0$ and the condition \eqref{eq:laplace-criterion-intro} holds.  When $1<p<\infty$, the local Calder\'on--Zygmund estimate then yields the $L^p$-integrability of the full  Hessian.  This argument is no longer available at $p=1$, and the $W^{2,1}_{\mathrm{loc}}$-regularity of $v$ must instead be established by estimating its second derivatives directly.

We first turn to the corresponding first-order regularity problem.  On $U\setminus Z(f)$, the chain rule gives
\[
\nabla v=\chi'(\log|f|)\nabla\log|f|,
\]
so the natural candidate for the weak gradient of $v$ is the vector field
\[
H:=\chi'(\log|f|)\nabla\log|f|,
\]
extended by zero on $Z(f)$.  For any $1 \leq p < 2$, the well-known result shows that $v \in \mathrm{W}_{loc}^{1,p}(U)$ \cite[Theorem~1.48]{GZ}.  For $p\ge2$, the next theorem gives a sharp criterion in terms of $\chi'$ and the minimal multiplicity $m_0$, and shows that the $L^p$-integrability of this classical gradient is also sufficient for first-order Sobolev regularity .

\begin{theorem}\label{thm:w1-intro}
Let $2\le p<\infty$.  The following conditions are equivalent:
\begin{equation}\label{eq:w1-criterion-intro}
v\in W^{1,p}_{\mathrm{loc}}(U)
\quad\Longleftrightarrow\quad
H\in L^p_{\mathrm{loc}}(U)
\quad\Longleftrightarrow\quad
\int_{-\infty}^{A}|\chi'(s)|^p
  e^{\frac{2-p}{m_0}s}\,\mathrm{d} s<\infty.
\end{equation}
In this case $H$ is the weak gradient of $v$.
\end{theorem}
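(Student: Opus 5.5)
The plan is to reduce everything to a level-set (coarea/distribution) estimate for $\log|f|$, i.e.\ the same machinery that underlies Theorem~\ref{thm:laplace-intro}. Write $\phi=\log|f|$. Since $|H|^p=|\chi'(\phi)|^p|\nabla\phi|^p$ is a function of $\phi$ times $|\nabla\phi|^p$, the core quantity is the ``level-set energy''
\[
E_p(t):=\int_{\{\phi<t\}\cap B}|\nabla\phi|^{p}\,\mathrm{d}V ,\qquad t\to-\infty,
\]
on a small ball $B$. I would establish the two-sided asymptotic
\[
E_p(t)\asymp e^{\frac{2-p}{m_0}t}\quad (p>2),\qquad E_2(t)\asymp 1,
\]
or, more usefully, that the layer integrals over $\{t-1<\phi<t\}$ are comparable to $e^{\frac{2-p}{m_0}t}$ for $t\ll0$. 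Heuristically, near a smooth point of a component $h_j=0$ with $h_j=z_1$, we have $|\nabla\phi|\approx m_j/|z_1|$ and $\{t-1<\phi<t\}$ is an annulus of radius $r\approx e^{t/m_j}$ and area $\approx r^2$, giving a contribution $r^{2-p}=e^{\frac{2-p}{m_j}t}$; the worst component is the one with smallest $m_j$ when $p>2$. For the upper bound I would pass to a log-resolution $\pi$ of $f$ (the paper's keywords suggest this). In monomial charts $f\circ\pi=\text{unit}\cdot\prod w_k^{a_k}$, and the pulled-back integrand with Jacobian $|J_\pi|^2=\prod|w_k|^{2b_k}$ is estimated chart by chart. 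One must check that exceptional divisors never give a worse exponent than $m_0$; the inequality needed is essentially $(b_k+1)/a_k\ge 1/m_0$ together with control of $|\nabla\phi|$ pulled back. This is the main obstacle. I expect to handle it by noting that $|\nabla\phi|\le C/\operatorname{dist}(\cdot,Z(f))$ is not enough, and instead using the Łojasiewicz-type bound $|\nabla f|\gtrsim|f|^{1-1/m_0}$ away from a codimension-2 set, or by directly bounding $|\partial\phi|$ in the chart via $\sum_k a_k|dw_k|/|w_k|$ and integrating. The lower bound is easy: restrict to a neighborhood of a smooth point of $\{h_{j_0}=0\}$ with $m_{j_0}=m_0$, where the model computation above is exact up to constants.

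Given this layer estimate, write $\int_B|H|^p=\sum_{k}\int_{\{-k-1<\phi<-k\}}|\chi'(\phi)|^p|\nabla\phi|^p$. Since $\chi'\ge0$ is monotone, it is comparable across a unit layer in the sense that $\chi'(-k-1)\le\chi'\le\chi'(-k)$. Hence
\[
\int_B|H|^p\asymp\sum_k \chi'(-k)^p e^{-\frac{2-p}{m_0}k}
\asymp\int_{-\infty}^A\chi'(s)^pe^{\frac{2-p}{m_0}s}\,\mathrm{d}s,
\]
with the index shift absorbed because $e^{\frac{2-p}{m_0}}$ is a fixed constant. This proves that $H\in L^p_{\mathrm{loc}}$ is equivalent to the integral criterion. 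For $p=2$ the weight is $1$ and the same argument applies, with the layer energy bounded above and below by constants.

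It remains to identify $H$ as the weak gradient when $H\in L^p_{\mathrm{loc}}$ (then $v\in L^p_{\mathrm{loc}}$ as well, e.g.\ by boundedness or a Poincaré argument). Use the truncations $v_N=\max(v,\chi(-N))=\chi(\max(\phi,-N))$, which are locally Lipschitz with gradient $H\mathbf 1_{\{\phi>-N\}}$ a.e. Then $v_N\to v$ in $L^1_{\mathrm{loc}}$ by monotone convergence ($v$ is psh, hence $L^1_{\mathrm{loc}}$), and $\nabla v_N\to H$ in $L^1_{\mathrm{loc}}$ by dominated convergence, since $Z(f)$ is null. Passing to the limit in $\int v_N\partial\psi=-\int\partial_jv_N\,\psi$ gives $\nabla v=H$. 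Conversely, if $v\in W^{1,p}_{\mathrm{loc}}$, its weak gradient agrees with the classical one on the open set $U\setminus Z(f)$, hence equals $H$ a.e., and so $H\in L^p_{\mathrm{loc}}$. Combining the three steps gives \eqref{eq:w1-criterion-intro}.
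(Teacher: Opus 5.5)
\textbf{Verdict: there is a genuine gap in the sufficiency direction.} Several parts of your proposal are sound: the necessity arguments (the weak gradient equals $H$ off the null set $Z(f)$, and the model $f=w_1^{m_0}$ at a smooth point of a minimal-multiplicity component), the reduction to unit layers $\{t-1<\phi<t\}$, and the identification of $H$ as the weak gradient. Your truncation $v_N=\chi(\max(\phi,-N))$ is in fact cleaner than the paper's integration by parts on $\{|f|>\varepsilon\}$, because it avoids the level-set area bound. (For $v\in L^p_{\mathrm{loc}}$, just use that $v$ is psh; it need not be bounded.)

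The gap is the implication from the integral condition to $H\in L^p_{\mathrm{loc}}$. Everything rests on the upper bound $\int_{K\cap\{t-1<\phi<t\}}|\nabla\phi|^p\,\mathrm dV\le Ce^{\frac{2-p}{m_0}t}$, which you call the ``main obstacle'' and leave unproved. Neither suggested route closes it.
\begin{itemize}
\item The \L{}ojasiewicz-type inequality $|\nabla f|\gtrsim|f|^{1-1/m_0}$ bounds $|\nabla\phi|$ from \emph{below}, so it is useless for an upper estimate. It also cannot hold uniformly near $0$: for $f=z_1z_2$ one has $m_0=1$ but $\nabla f(0)=0$.
\item Bounding $|\partial(\phi\circ\pi)|$ by $\sum_k a_k|\mathrm dw_k|/|w_k|$ on a resolution chart ignores that $\nabla_z\phi=(D\pi)^{-\mathsf T}\nabla_w(\phi\circ\pi)$ with $(D\pi)^{-1}=\operatorname{adj}(D\pi)/\operatorname{jac}(\pi)$. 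For $p>2$ the density $|\nabla\phi|^p\,\mathrm dV$ is not the pull-back of a form, so you are left with an uncontrolled factor of size $|\operatorname{jac}\pi|^{2-p}$ along the exceptional divisors. That is precisely where your claim that no exceptional divisor beats $m_0$ would have to be proved.
\end{itemize}
Note also that your sublevel quantity $E_p(t)$ is $+\infty$ for every $p\ge 2$, already for $f=w_1$, so only the layer version of your estimate makes sense.

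\textbf{The missing idea} is an elementary pointwise \emph{upper} bound that makes the resolution unnecessary. From $f=u\prod_j h_j^{m_j}$, logarithmic differentiation gives $|\partial f|/|f|\le C(1+\sum_j|h_j|^{-1})$. Since $|f|^{1/m_0}\le C|h_j|^{m_j/m_0}\le C|h_j|$ once $|h_j|\le 1$, this yields $|\partial f|\le C|f|^{1-1/m_0}$ near $0$.

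You need one more input, even for $p=2$: the first-order level-set energy $\int_{K\cap\{|f|=r\}}|\partial f|\,\mathrm dS\le Cr$. The paper obtains it from continuity of the fiber volumes $t\mapsto\operatorname{Vol}_{2n-2}(f^{-1}(t)\cap B)$. Bounding $p-2$ copies of $|\partial f|$ pointwise and the last one by this energy, the coarea formula gives
\[
\int_{K\cap\{0<|f|<r_0\}}|H|^p\,\mathrm dV\le C\int_0^{r_0}\chi'(\log r)^p\,r^{-p}\,r^{(p-2)(1-1/m_0)}\Bigl(\int_{K\cap\{|f|=r\}}|\partial f|\,\mathrm dS\Bigr)\mathrm dr\le C\int_{-\infty}^{\log r_0}\chi'(s)^p e^{\frac{2-p}{m_0}s}\,\mathrm ds .
\]
In your layer language: the $p$-layer energy is at most $Ce^{\frac{2-p}{m_0}t}$ times the $2$-layer energy, and the latter is bounded.
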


For the Sobolev regularity $W^{2,p}$ of $v$, the residual divisor term  in Theorem \ref{thm:laplace-intro} must first vanish. Once this is imposed, the criterion is established for $1<p<\infty$, while the endpoint $p=1$ requires a different sharp criterion, as shown in the third main theorem.

\begin{theorem}\label{thm:w2-intro}
The following assertions hold.
\begin{enumerate}[label=\textup{(\roman*)}]
\item For $1<p<\infty$,
\begin{equation}\label{eq:w2p-criterion-intro}
v\in W^{2,p}_{\mathrm{loc}}(U)
\quad\Longleftrightarrow\quad
\ell_\chi=0
\quad\text{and}\quad
\int_{-\infty}^{A}|\chi''(s)|^p
 e^{\frac{2(1-p)}{m_0}s}\,\mathrm{d} s<\infty.
\end{equation}
\item At the endpoint $p=1$,
\begin{equation}\label{eq:w21-criterion-intro}
v\in W^{2,1}_{\mathrm{loc}}(U)
\quad\Longleftrightarrow\quad
\chi'\in L^1((-\infty,A))
\quad\Longleftrightarrow\quad
\lim_{s\to-\infty}\chi(s)>-\infty.
\end{equation}
\end{enumerate}
\end{theorem}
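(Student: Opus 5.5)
We prove the two parts separately. Both rest on Theorems~\ref{thm:laplace-intro} and~\ref{thm:w1-intro}, together with a direct estimate of the full Hessian of $v$ away from $Z(f)$.

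\emph{Part (i).} Let $1<p<\infty$. For necessity, suppose $v\in W^{2,p}_{\mathrm{loc}}(U)$. Then $\Delta v$ is an $L^p_{\mathrm{loc}}$ function, and by \eqref{eq:full-distribution-laplace-intro} it equals $G+2\pi\ell_\chi\mu_f$. The measure $\mu_f$ is supported on the hypersurface $Z(f)$, which has Lebesgue measure zero, and $\mu_f\neq0$. Hence $\ell_\chi=0$ and $\Delta v=G\in L^p_{\mathrm{loc}}$. The integral condition then follows from Theorem~\ref{thm:laplace-intro}.

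For sufficiency, assume $\ell_\chi=0$ and the integral condition. Theorem~\ref{thm:laplace-intro} gives $\Delta v=G\in L^p_{\mathrm{loc}}$, and $v\in L^p_{\mathrm{loc}}$ because it is plurisubharmonic with analytic singularities. Note that $\chi(\log|f|)$ is dominated by $|\log|f||$ up to constants, since $\chi'$ is bounded. Interior Calder\'on--Zygmund estimates for the Laplacian then yield $v\in W^{2,p}_{\mathrm{loc}}$. A cutoff argument makes this rigorous: $\Delta(\psi v)=\psi\Delta v+2\nabla\psi\cdot\nabla v+v\Delta\psi$. The gradient term lies in $L^q$ with $q<2$ from \cite[Theorem~1.48]{GZ}. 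We then bootstrap through $W^{1,q}\subset L^{r}$ and apply $W^{2,\min(p,\cdot)}$ estimates, reaching $p$ after finitely many steps.

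\emph{Part (ii).} The equivalence $\chi'\in L^1\iff\lim\chi>-\infty$ is immediate, because $\chi(A)-\chi(s)=\int_s^A\chi'$ and $\chi'\ge0$.

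For ($\Rightarrow$), suppose $v\in W^{2,1}_{\mathrm{loc}}$. By the argument of part (i) we get $\ell_\chi=0$. We then test the second derivatives in the direction normal to a smooth point of $Z(f)$ with minimal multiplicity $m_0$. Near such a point take local coordinates with $f=z_1^{m}\cdot$unit. There $v\approx\chi(m\log|z_1|)$, up to a harmless perturbation. For a radial function $w(z_1)=\chi(m\log r)$ in $\mathbb C$, the Hessian contains the term
\[
\partial_{z_1}^2 w\sim \frac{m\chi'(m\log r)}{z_1^2}.
\]
Unlike the Laplacian, this term does not cancel. Its $L^1$ norm on a disc is comparable to $\int_0 \chi'(m\log r)\,\frac{dr}{r}=\frac1m\int_{-\infty}\chi'(s)\,ds$. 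Since $\chi''\ge0$, the $\chi''$ contributions cannot cancel it. This forces $\chi'\in L^1$. The unit factor contributes lower-order terms, bounded by $\chi'(\log|f|)|\nabla\log|f||\in L^1$.

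For ($\Leftarrow$), assume $\chi'\in L^1$. Since $\chi'$ is non-decreasing and integrable, $\ell_\chi=0$, and also $\chi''\ge0$ with $\int\chi''<\infty$. The classical Hessian satisfies
\[
|D^2v|\lesssim \chi''(\log|f|)|\nabla\log|f||^2+\chi'(\log|f|)|D^2\log|f||.
\]
The first term is $L^1_{\mathrm{loc}}$ by Theorem~\ref{thm:laplace-intro} with $p=1$, since $\int\chi''<\infty$. The main obstacle is the second term: we must show $\chi'(\log|f|)\,|\nabla\log|f||^2\in L^1_{\mathrm{loc}}$, which bounds $|D^2\log|f||\le|f''|/|f|+|\nabla\log|f||^2$. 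The remaining piece $|f''|/|f|$ is handled the same way via a log-resolution.

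For this we use a log-resolution $\pi$ of $f$ and the coarea or level-set energy estimates already developed for Theorem~\ref{thm:w1-intro}. These should give
\[
\int_{\{\log|f|\in(s,s+1)\}}|\nabla\log|f||^2\le C,
\]
uniformly in $s$; this is the $p=2$ threshold case, with weight $e^{0}$. Summing $\chi'(s)$ against these bounds over unit intervals yields $\lesssim\int\chi'<\infty$. The same dyadic level-set decomposition, with the estimate $\int_{\{s<\log|f|<s+1\}}|f''|/|f|\le C$ in resolution charts, handles the remaining piece.

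Finally, to show the classical Hessian is the weak Hessian, we cut off near $Z(f)$ with $\theta_\varepsilon(\log|f|)$. The resulting boundary terms are of the form $\chi'(\log|f|)|\nabla\log|f||^2\,\mathbf 1_{\{\log|f|\approx\log\varepsilon\}}$. By the same level-set estimate these are bounded by $\chi'(\log\varepsilon)\to0$, which tends to zero because $\chi'$ is integrable and monotone.
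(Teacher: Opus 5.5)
Your overall architecture matches the paper's. Part (i) goes through $\Delta v=G+2\pi\ell_\chi\mu_f$, with the Lebesgue decomposition for necessity and Calder\'on--Zygmund for sufficiency. Part (ii) splits the classical Hessian into $\chi''(\log|f|)|\nabla\log|f||^2$, $\chi'(\log|f|)|\nabla\log|f||^2$ and $\chi'(\log|f|)|D^2f|/|f|$, then identifies the weak derivatives by a cutoff. Your smooth cutoff $\theta_\varepsilon(\log|f|)$ is a fine substitute for the paper's domains $\{|f|>\varepsilon_j\}$.

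However, the decisive step of ($\Leftarrow$) in (ii) is only asserted. You need
\[
\int_{K\cap\{s<\log|f|<s+1\}}\frac{|D^2f|}{|f|}\,\mathrm{d}V\le C\qquad\text{uniformly as } s\to-\infty,
\]
which is what Proposition~\ref{prop:second-energy} provides via the coarea formula. Saying it is ``handled the same way via a log-resolution'' is not an argument, and there are three reasons it is not routine:
\begin{itemize}
\item It is not the same as the first-order bound, which comes from Proposition~\ref{prop:first-energy} (continuity of fibre volumes) with no resolution at all.
\item $|D^2f|/|f|$ is not pointwise dominated by $|\partial f|^2/|f|^2$; take $f=z_1^2+z_2^3$ near $\{z_1=0\}$.
\item The obvious valuative information is not enough. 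Work in an snc chart with $F=f\circ\pi=\prod_\ell y_\ell^{a_\ell}$. The inequality $\nu_E(f_{ij})+2A_U(E)\ge\nu_E(f)$ only yields $|\operatorname{jac}(\pi)^2(f_{ij}\circ\pi)|\lesssim\prod_\ell|y_\ell|^{a_\ell-2}$. At a crossing of $k$ components with $a_\ell\ge2$, all exponents then lie on the critical line $b_\ell+2=a_\ell$, where the level-set integral grows like $|\log t|^{k-1}$.
\end{itemize}

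The missing idea is the termwise expansion. Applying $(D\pi)^{-1}=\operatorname{adj}(D\pi)/\operatorname{jac}(\pi)$ twice writes $\operatorname{jac}(\pi)^2(f_{ij}\circ\pi)$ as a combination, with holomorphic coefficients, of the $F_{pq}$ and of $F_q\operatorname{adj}(D\pi)_{pi}\,\partial_p\operatorname{jac}(\pi)/\operatorname{jac}(\pi)$. The latter is holomorphic because $y_p$ divides $\operatorname{adj}(D\pi)_{pi}$ along an exceptional $E_p$ (the rank of $d\pi|_{TE_p}$ drops). Each term is then bounded by a monomial with $b_\ell+2\ge a_\ell$ and equality for at most one $\ell$ (arising only from $F_{pp}$). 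Lemma~\ref{lem:monomial-delta} bounds the monomial $\delta$-integral uniformly in $t$ exactly under this at-most-one-equality condition. Without this bookkeeping your sufficiency proof at $p=1$ is incomplete, and this is the main technical content of the theorem.

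Second, your necessity argument in (ii) rests on a false claim. For $w=\chi(m\log|z_1|)$,
\[
\partial_{z_1}^2w=\frac{m^2\chi''(m\log r)-2m\chi'(m\log r)}{4z_1^2}.
\]
The $\chi''$ and $\chi'$ contributions have \emph{opposite} signs and can cancel. For $\chi(s)=e^{2s/m}$ one gets $w=|z_1|^2$ and $\partial_{z_1}^2w\equiv0$. So the $L^1$ norm of this component is not comparable to $\int\chi'$. There are two easy fixes:
\begin{itemize}
\item Use the tangential second derivative $D^2w(e_\theta,e_\theta)=\frac{m}{r^2}\chi'(m\log r)$, which contains no $\chi''$ (Lemma~\ref{lem:radial-model}).
\item Use the inequality $2m\chi'(m\log r)/r^2\le\Delta w+4|\partial_{z_1}^2w|$.
\end{itemize}
Likewise, the unit in $f=z_1^m\cdot(\text{unit})$ is not a perturbation bounded by $\chi'(\log|f|)|\nabla\log|f||$, since it shifts the argument of $\chi'$ and $\chi''$. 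Instead, take an $m$-th root of the unit and change coordinates so that $f=w_1^m$ exactly, as the paper does.
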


Two points are worth to note. First, although the zero set $Z(f)$ may be singular, the conditions in \eqref{eq:laplace-criterion-intro} and \eqref{eq:w1-criterion-intro} depend on $f$ only through the smallest multiplicity $m_0$ of $\operatorname{Div}(f)$.  Second, the case $p=1$ in Theorem~\ref{thm:w2-intro} is different from the case $1<p<\infty$.  For $1<p<\infty$, the condition for $v\in W^{2,p}_{\mathrm{loc}}(U)$ is expressed in terms of $\ell_\chi$ and the $L^p$-integrability of the classical Laplacian $G$.  But for $p=1$, the condition is 
\[
v\in W^{2,1}_{\mathrm{loc}}(U)
\quad\Longleftrightarrow\quad
\chi'\in L^1((-\infty,A)).
\]
In particular, the condition $G\in L^1_{\mathrm{loc}}(U)$ alone does not imply that $v\in W^{2,1}_{\mathrm{loc}}(U)$.

 We briefly describe the main ideas of the proofs.  The necessary conditions follow from the behavior of $f$ near a smooth point of an irreducible component of $\operatorname{Div}(f)$.  At such a point, after a change of holomorphic  coordinates, one may write $f=w_1^m$ locally.
The problem is then reduced to monomial case.  To prove the sufficient conditions, we use the coarea formula and estimates for the level sets of $|f|$.  The proof of the $W^{2,1}_{\mathrm{loc}}$ criterion also requires an estimate involving the second derivatives of $f$.  For this purpose, we take a log-resolution of $\operatorname{Div}(f)$ and reduce the required estimate to an integral involving monomials.  The distribution identity part are identified by integration by parts on the domains $\{|f|>\varepsilon\}$
and then letting $\varepsilon$ tend to zero.    The level-set estimates used here are related to those obtained in \cite{PSWW}.  

As an immediate consequence, the two endpoint examples considered in \cite{PSWW} are recovered from Theorem~\ref{thm:w2-intro}. The first example shows a universal phenomenon for the failure of $L^1$ regularity in Calder\'on-Zygmund theory.

\begin{corollary}[\cite{PSWW} Theorem~1.4, Theorem~1.5]\label{cor:psww}
Let $f$ be as above and assume after shrinking $U$ that $\log|f|<-e$.

\begin{enumerate}[label=\textup{(\roman*)}]
\item For
$
v_1=-\log(-\log|f|),
$
one has
\[
v_1\in W^{1,2}_{\mathrm{loc}}(U),
\qquad
\Delta v_1\in L^1_{\mathrm{loc}}(U),
\qquad
v_1\notin W^{2,1}_{\mathrm{loc}}(U).
\]

\item For
$
v_2=\frac{1}{\log(-\log|f|)},
$
one has
\[
v_2\in W^{1,2}_{\mathrm{loc}}(U),
\qquad
\Delta v_2\in L^1_{\mathrm{loc}}(U),
\qquad
v_2\in W^{2,1}_{\mathrm{loc}}(U).
\]
\end{enumerate}
\end{corollary}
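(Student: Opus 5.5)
The plan is to derive both parts of Corollary~\ref{cor:psww} directly from Theorems~\ref{thm:laplace-intro}, \ref{thm:w1-intro} and \ref{thm:w2-intro}. Set $A=-e$. For each $v_i$ we write $v_i=\chi_i(\log|f|)$ and check that $\chi_i$ is $C^2$, non-decreasing and convex on $(-\infty,-e]$. We then evaluate the one-dimensional integrals in \eqref{eq:laplace-criterion-intro} with $p=1$, in \eqref{eq:w1-criterion-intro} with $p=2$, and in \eqref{eq:w21-criterion-intro}. At $p=2$ and $p=1$ the exponential weights become trivial, namely $e^{0}=1$, so $m_0$ plays no role.

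For (i), take $\chi_1(s)=-\log(-s)$. Then
\[
\chi_1'(s)=\frac{1}{-s}>0,\qquad \chi_1''(s)=-\frac{1}{s^2}<0 .
\]
So $\chi_1$ is concave, and Theorem~\ref{thm:laplace-intro} does not apply verbatim. Either the intended function is $v_1=\log(-\log|f|)$ with the opposite sign, or we must note that $-v_1$ satisfies the hypotheses. Since Sobolev membership and $L^1$-integrability of $\Delta v$ are invariant under $v\mapsto -v$, we may work with $\tilde\chi_1(s)=\log(-s)$. This function is decreasing rather than non-decreasing, so as written the hypotheses still fail.

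I would handle this by observing that the proofs of the criteria only use the formulas $\nabla v=\chi'\nabla\log|f|$ and $\Delta v=\chi''|\nabla\log|f||^2$ off $Z(f)$, together with level-set estimates. These go through for any $C^2$ function $\chi$ whose derivatives have a fixed sign near $-\infty$. If that is not acceptable, I would simply state the corollary for $v_1=-\log(-\log|f|)$, which is psh but with $\chi_1$ concave, and invoke the theorems in the form that depends only on $|\chi'|$ and $|\chi''|$.

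With this in hand the three checks for (i) are as follows.
\begin{itemize}
\item $\int_{-\infty}^{-e}|\chi_1'|^2\,ds=\int_{-\infty}^{-e} s^{-2}\,ds<\infty$, which gives $W^{1,2}_{\mathrm{loc}}$.
\item $\int_{-\infty}^{-e}|\chi_1''|\,ds<\infty$, and $\ell_\chi=\lim_{s\to-\infty}\chi_1'(s)=0$. Hence by \eqref{eq:full-distribution-laplace-intro}, $\Delta v_1=G\in L^1_{\mathrm{loc}}$.
\item $\int_{-\infty}^{-e}|\chi_1'|\,ds=\int_{-\infty}^{-e}\frac{ds}{-s}=\infty$, equivalently $\chi_1\to-\infty$. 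Hence $v_1\notin W^{2,1}_{\mathrm{loc}}$ by \eqref{eq:w21-criterion-intro}.
\end{itemize}

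For (ii), take $\chi_2(s)=1/\log(-s)$ and put $t=-s\ge e$, so that $\log t\ge 1$. Then
\[
\chi_2'(s)=\frac{1}{t\log^2 t}>0,
\]
so $\chi_2$ is increasing. Moreover
\[
\chi_2''(s)=\frac{\log t+2}{t^2\log^3 t}>0,
\]
so $\chi_2$ is convex and the hypotheses hold genuinely. The checks are:
\begin{itemize}
\item $\int_{-\infty}^{-e}|\chi_2'|^2\,ds\lesssim\int_e^\infty t^{-2}\,dt<\infty$, which gives $W^{1,2}_{\mathrm{loc}}$.
\item $\int_{-\infty}^{-e}\chi_2''\,ds<\infty$ and $\ell_\chi=0$, which gives $\Delta v_2\in L^1_{\mathrm{loc}}$.
\item $\int_{-\infty}^{-e}\chi_2'\,ds=\int_e^\infty\frac{dt}{t\log^2 t}<\infty$, equivalently $\chi_2\to0>-\infty$. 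Hence $v_2\in W^{2,1}_{\mathrm{loc}}$ by \eqref{eq:w21-criterion-intro}.
\end{itemize}

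The main obstacle is the sign and convexity issue in part (i). All the integral checks are elementary. Justifying the application of the theorems to $v_1$ requires either the sign-flip remark above or noting that the criteria depend only on $|\chi'|$ and $|\chi''|$.
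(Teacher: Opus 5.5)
Your route is the paper's route. The corollary is read off from Theorems~\ref{thm:laplace-intro}, \ref{thm:w1-intro} and \ref{thm:w2-intro} with $\chi_1(s)=-\log(-s)$ and $\chi_2(s)=1/\log(-s)$; the paper simply points to its table. Your integral checks are all correct, including the use of $\ell_\chi=0$ to identify the distributional Laplacian with $G$, and part (ii) is fine as written.

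Part (i), however, contains a sign error, and you build a spurious ``main obstacle'' on it. From $\chi_1'(s)=\frac{1}{-s}=-\frac{1}{s}$ one gets $\chi_1''(s)=\frac{1}{s^2}>0$, not $-\frac{1}{s^2}$. Your own formula already shows that $\chi_1'$ increases on $(-\infty,-e]$. More simply, $\chi_1$ is convex because $s\mapsto\log(-s)$ is concave. So $\chi_1$ is $C^2$, non-decreasing and convex on $(-\infty,-e]$, the three theorems apply verbatim, and the statement needs no reinterpretation: $v_1=-\log(-\log|f|)$ is the familiar psh function.

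As written, though, your argument for (i) depends on applying the criteria to an arbitrary $C^2$ function $\chi$ with fixed-sign derivatives, ``in the form that depends only on $|\chi'|$ and $|\chi''|$''. The paper proves no such version, and its proofs do use convexity and monotonicity:
\begin{itemize}
\item The bound $0\le\chi'\le\chi'(A)$ gives the linear growth \eqref{eq:chi-linear-growth}, which kills the boundary terms in the integrations by parts on $\Omega_\varepsilon$.
\item Plurisubharmonicity of $v$ is what gives $v\in L^1_{\mathrm{loc}}$.
\item In the endpoint proof, $\ell_\chi=0$ and $\int_{-\infty}^A\chi''=\chi'(A)$ are deduced from $\chi'\in L^1$ using that $\chi'$ is nonnegative and non-decreasing.
\end{itemize}
For a genuinely concave increasing $\chi$, $\chi'$ need not even stay bounded near $-\infty$, so that extension would need its own proof. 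Delete the sign-flip detour rather than repair it: correct $\chi_1''$, and your proof is complete and identical to the paper's.
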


Indeed, the two cases in Corollary \ref{cor:psww} correspond respectively to
\[
\chi(s)=-\log(-s)
\qquad\text{and}\qquad
\chi(s)=\frac{1}{\log(-s)}.
\]
In the first case $\chi'\notin L^1((-\infty,A))$, whereas in the second case $\chi'\in L^1((-\infty,A))$.  Thus the different endpoint behavior of the two examples is exactly characterized by the criterion in Theorem~\ref{thm:w2-intro} (ii).

Another concrete consequence is obtained from the choice $\chi(s) = e^{\gamma s}$ where $\gamma > 0$, for which $v = |f|^\gamma$. Our criteria give the sharp thresholds
\[
|f|^\gamma \in W_{\mathrm{loc}}^{1,p} \iff \gamma > \frac{p - 2}{pm_0}, \quad 2 \leq p < \infty,
\]
and
\[
|f|^\gamma \in W_{\mathrm{loc}}^{2,p} \iff \gamma > \frac{2(p - 1)}{pm_0}, \quad 1 \leq p < \infty.
\]
At the endpoint, notice that $|f|^\gamma \in W_{\mathrm{loc}}^{1,2}$ and $|f|^\gamma \in W_{\mathrm{loc}}^{2,1}$ for every $\gamma > 0$.

One fundamental problem in pluripotential theory is to define complex Monge--Amp\`ere operator for unbounded plurisubharmonic functions. Beyond locally bounded setting of Bedford-Taylor theory \cite{BT82}, different smooth decreasing approximations may produce different limiting measures. The class $\mathcal D(U)$ denotes the classical local domain, in which the Monge-Amp\`ere measure of every smooth decreasing plurisubharmonic approximation converges to the same limit. Section~\ref{sec:MA} recalls the precise definition and its relation to Cegrell's class $\mathcal E$. The following application characterizes elements in $\mathcal D(U)$ for $v$ by an integral condition on $\chi$, independent of multiplicities and singularities of $\operatorname{Div}(f)$. In dimension $2$, this condition coincides with the $W^{1,2}_{\mathrm{loc}}$ criterion in Theorem \ref{thm:w1-intro}.

\begin{theorem}\label{prop:app-D-criterion}
Assume that $\chi\in C^\infty((-\infty,A])$, the dimension $n$ is at least 2 and after
subtracting a constant that $\chi\le-2$ on $(-\infty,A]$.
Then
\begin{equation}\label{eq:app-D-criterion}
v=\chi(\log|f|)\in\mathcal D(U)
\quad\Longleftrightarrow\quad
\int_{-\infty}^{A}
(-\chi(s))^{n-2}\chi'(s)^2\,\mathrm{d}s<\infty.
\end{equation}
When these conditions hold,
\[
(dd^cv)^n=0,
\qquad
\nu(v,x)=0\quad\text{for every }x\in U,
\]
where $\nu(v,x)$ denotes the Lelong number of $v$ at $x$.
In particular, if $U$ is a sufficiently small ball, the same
integral characterizes $v\in\mathcal E(U)$.
\end{theorem}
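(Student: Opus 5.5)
The plan is to reduce membership in $\mathcal D(U)$ to Błocki's characterization. For $n\ge2$, a plurisubharmonic $v$ lies in $\mathcal D(U)$ if and only if for every $x\in U$ there is a neighborhood $V$ with smooth psh $v_j\downarrow v$ on $V$ such that
\[
|v_j|^{n-p-2}\,dv_j\wedge d^cv_j\wedge(dd^cv_j)^p\wedge\omega^{n-p-1},\qquad p=0,\dots,n-2,
\]
are locally weakly bounded. The natural approximants are
\[
v_j=\chi\bigl(\tfrac12\log(|f|^2+\varepsilon_j)\bigr),
\]
or equivalently $\chi_j(\log|f|)$ with $\chi_j$ obtained by replacing $\chi$ on $(-\infty,-j]$ by a convex extension. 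These decrease to $v$ because $\chi$ is non-decreasing.

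The key computation is structural. Each $v_j$ has the form $\phi(\psi)$ with $\psi=\tfrac12\log(|f|^2+\varepsilon)$. Then
\[
dd^c v_j=\phi'(\psi)\,dd^c\psi+\phi''(\psi)\,d\psi\wedge d^c\psi ,
\]
and $dd^c\psi$ is itself a rank-one-type form: $|f|^2+\varepsilon=|F|^2$ with $F=(f,\sqrt\varepsilon)$, so $dd^c\psi$ comes from a map to $\mathbb C^2$ and has rank at most $1$. Since $d\psi\wedge d^c\psi$ also has rank $1$, we get $(dd^cv_j)^p=0$ for $p\ge2$ off a negligible set. For $p=1$ the term $dv_j\wedge d^cv_j\wedge dd^cv_j$ is a product of rank-one forms built from $df$, so it vanishes except where $dd^c\psi$ contributes. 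A cleaner route is to work on $\{f\neq0\}$, where $\log|f|$ is pluriharmonic and every mixed term with $p\ge1$ vanishes identically. Only $p=0$ survives:
\[
|v|^{n-2}\,dv\wedge d^cv\wedge\omega^{n-1}\simeq(-\chi)^{n-2}\chi'^2\,|\nabla\log|f||^2\,dV.
\]
So the criterion should be exactly the local integrability of this density. By the coarea formula and the level-set estimates used for Theorem~\ref{thm:w1-intro} with $p=2$ (where the weight $e^{\frac{2-p}{m_0}s}$ becomes $1$, which explains independence from $m_0$), this is equivalent to
\[
\int(-\chi)^{n-2}\chi'^2\,ds<\infty .
\]
Here $\int_{\{|f|=e^s\}\cap K}|\nabla\log|f||\,d\sigma$ is comparable to a constant for $s\ll0$. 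The upper bound comes from the level-set estimate; the lower bound comes from the local model $f=w_1^m$ at a smooth point of $\operatorname{Div}(f)$.

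For sufficiency I will take $\chi_j$ convex, smooth, non-decreasing, and $\chi_j\downarrow\chi$, with $\chi_j=\chi$ on $[-j,A]$ and $\chi_j$ affine of slope $\chi'(-j)$ below $-j$. I then set $v_j=\chi_j(\tfrac12\log(|f|^2+e^{-2j'}))$ for a suitably faster $j'$, and verify that the $p\ge1$ terms are nonnegative and controlled. To handle the $\varepsilon$-regularization precisely, I would use the identity $dd^c\tfrac12\log(|f|^2+\varepsilon)=\frac{\varepsilon}{|f|^2+\varepsilon}\,dd^c\tfrac12\log|f|^2$ off $Z$, plus the positive rank-one term $\frac{\varepsilon\, i\,df\wedge d\bar f}{(|f|^2+\varepsilon)^2}$. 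Wedging with $d\psi\wedge d^c\psi\propto i\,df\wedge d\bar f$ kills everything, so only the $p=0$ term matters, with uniform bounds from the coarea estimate. Necessity follows from Błocki's theorem: $v\in\mathcal D$ implies $|v|^{n-2}dv\wedge d^cv\wedge\omega^{n-1}$ is locally finite, which forces the integral to converge by the lower bound.

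The consequences are then routine. $(dd^cv)^n$ is the limit of $(dd^cv_j)^n=0$. Lelong numbers vanish because $\chi(s)/s\to0$: convergence of $\int(-\chi)^{n-2}\chi'^2$ together with $\chi'$ non-decreasing forces $\ell_\chi=0$, since otherwise $-\chi\sim\ell_\chi|s|$ and the integrand is not integrable. For a small ball, $\mathcal D=\mathcal E$ locally by Błocki/Cegrell, after modifying $v$ to be maximal near the boundary. I expect the main obstacle to be the regularization step: making the approximants decrease while keeping uniform control of $|v_j|^{n-2}$ near $Z(f)$, where $\chi_j$ must dominate $\chi$ yet not blow up the weights.
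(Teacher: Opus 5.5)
Your overall strategy matches the paper's. You apply B{\l}ocki's criterion to $v_\varepsilon=\chi\bigl(\tfrac12\log(|f|^2+\varepsilon^2)\bigr)$, and you note that $dv_\varepsilon\wedge d^cv_\varepsilon$ and $dd^cv_\varepsilon$ are both scalar multiples of $i\,df\wedge d\bar f$. It is this proportionality to one and the same form, not ``each has rank one'', that makes every term with $k\ge1$ and $(dd^cv_\varepsilon)^n$ vanish identically. The remaining ingredients also match: the coarea/first-energy bound for $k=0$, necessity from the model $f=w_1^m$ at a smooth point, and $\ell_\chi=0$ for the Lelong numbers.

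\textbf{The gap.} The step that fails is the sufficiency regularization you actually commit to. Extending $\chi$ below $-j$ by its tangent line of slope $\chi'(-j)$ gives $\chi_j\le\chi$ by convexity, and in fact $\chi_j\uparrow\chi$, not $\chi_j\downarrow\chi$. Since $\tfrac12\log(|f|^2+e^{-2j'})\ge -j'$, the modification is invisible when $j'\le j$. When $j'$ outruns $j$, it pushes $v_j$ below $v$: at points with $e^{-j'}\ll|f|<e^{-j}$ one has $v_j\approx\chi_j(\log|f|)<\chi(\log|f|)=v$ wherever $\chi$ is strictly convex. For example, take $\chi(s)=-\log(-s)$, $j=10$, $j'=20$, $|f|=e^{-15}$. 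So $(v_j)$ is not a decreasing approximation of $v$, and B{\l}ocki's theorem cannot be applied to it. Likewise, $\chi_j(\log|f|)$ is not smooth across $Z(f)$, so it is not ``equivalent'' to your first family.

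\textbf{The repair.} Drop $\chi_j$ and use your first family, which decreases to $v$ simply because $\chi$ is non-decreasing. The obstacle you anticipate then disappears. Put $s(r)=\tfrac12\log(r^2+\varepsilon^2)$. Coarea together with $\int_{K\cap\{|f|=r\}}|\partial f|\,dS\le Cr$ gives
\[
\int_{K\cap\{|f|<r_0\}}(-v_\varepsilon)^{n-2}|\nabla v_\varepsilon|^2\,dV
\le C\int_0^{r_0}\bigl[-\chi(s(r))\bigr]^{n-2}\chi'(s(r))^2\,\frac{r^3\,dr}{(r^2+\varepsilon^2)^2}.
\]
Moreover $\frac{r^3\,dr}{(r^2+\varepsilon^2)^2}=\frac{r^2}{r^2+\varepsilon^2}\,ds\le ds$. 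Thus the weight $(-\chi)^{n-2}$ and $\chi'^2$ are evaluated at the same $s$, and the bound is $C\int_{-\infty}^{A}(-\chi)^{n-2}\chi'^2\,ds$, uniformly in $\varepsilon$.

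\textbf{Two smaller points.} For necessity, B{\l}ocki's theorem bounds the energies of the approximants. You pass to $v$ on $K\setminus Z(f)$ by Fatou's lemma, using $C^1$ convergence $v_\varepsilon\to v$ off $Z(f)$. For the $\mathcal E$ statement, no modification of $v$ near the boundary is needed, since on a bounded hyperconvex domain $\mathcal D=\mathcal E$ for negative psh functions.
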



The paper is organized as follows. In Section~\ref{sec:preliminaries}, we introduce some conventions and reduce necessity to a one-variable radial model.
Section~\ref{sec:resolution} develops the required divisorial estimates and
proves the level-set energy bounds.  The classical and distributional
Laplacians are treated in Section~\ref{sec:laplace}.  Section~\ref{sec:sobolev}
proves the first- and second-order Sobolev criteria, with a separate argument
at the endpoint $p=1$.  In Section~\ref{sec:examples}  we deal with the two applications. 

\section{Preliminaries}\label{sec:preliminaries}

\subsection{Conventions}

Throughout the paper, $U\subset\mathbb{C}^n$ is a sufficiently small neighborhood of
the origin, $f\in\mathcal{O}(U)$ is nonzero with $f(0)=0$, and
$Z(f)=\{f=0\}\neq\emptyset$.  We fix $A\in\mathbb{R}$ and a convex non-decreasing function
$\chi\in C^2(({-\infty},A])$, shrinking $U$ so that $\log|f|\le A$.  We write
$u_f=\log|f|$ and $v=\chi(u_f)$, with the canonical upper-semicontinuous
extension across $Z(f)$.  The symbols $\nabla$, $\Delta$, $D^2$, and
$\mathrm{d}$ always denote the real Euclidean differential objects under the
identification $\mathbb{C}^n\simeq\mathbb{R}^{2n}$.

Write $z_j=x_j+iy_j$ and
\[
f_j=\frac{\partial f}{\partial z_j},\qquad
f_{jk}=\frac{\partial^2f}{\partial z_j\partial z_k},\qquad
|\partial f|^2=\sum_{j=1}^n|f_j|^2.
\]
With the standard Euclidean metric, $|\nabla|f||=|\partial f|$ at points where $f\ne0$, and $|\nabla f|^2=2|\partial f|^2$ when $f$ is viewed as a
complex-valued map. 

By the convexity and monotonicity, the limit
\[
\ell_\chi=\lim_{s\to-\infty}\chi'(s)\in[0,\infty)
\]
exist.  The composition rule for plurisubharmonic functions shows that the
upper semicontinuous extension of $v$ is psh.  It is not
identically $-\infty$, because it is finite on $U\setminus Z(f)$, and hence it is
locally integrable. Moreover $v \in \mathrm{W}_{loc}^{1,p}(U)$, for any $1 \leq p < 2$ (\cite{GZ} Theorem~1.48 ).    Notice also that convexity gives a linear growth bound
\begin{equation}\label{eq:chi-linear-growth}
|\chi(s)|\le C(1+|s|),\qquad s\le A.
\end{equation}
To verify this bound, use $0\le\chi'(t)\le\chi'(A)$ for $t\le A$:
\[
0\le\chi(A)-\chi(s)
=\int_s^A\chi'(t)\,\mathrm{d} t
\le\chi'(A)(A-s).
\]
Every plurisubharmonic function which is not identically equal to $-\infty$ belongs to $L^q_{\mathrm{loc}}$ for every $1\le q<\infty$; see \cite[Theorem~1.48]{GZ}. Hence
$v=\chi(\log|f|)\in L^q_{\mathrm{loc}}(U)$ for every finite $q$.

\subsection{Differential formulas out of the zero divisor}

Put $u_f=\log|f|$.  On $U\setminus Z(f)$ the function $u_f$ is pluriharmonic,
and
\begin{equation}\label{eq:uf-derivatives}
\frac{\partial u_f}{\partial z_j}=\frac{f_j}{2f},
\qquad
\frac{\partial^2u_f}{\partial z_j\partial z_k}
=\frac{f_{jk}}{2f}-\frac{f_j f_k}{2f^2},
\qquad
\frac{\partial^2u_f}{\partial z_j\partial\overline z_k}=0.
\end{equation}
The chain rule shows
\begin{equation}\label{e92901}
\begin{split}
\nabla v
 &=\chi'(u_f)\nabla u_f,\\
D^2v
 &=\chi''(u_f)\,\,\mathrm{d} u_f\otimes\,\mathrm{d} u_f
   +\chi'(u_f)D^2u_f,\\
\Delta v
 &=\chi''(u_f)|\nabla u_f|^2.
 \end{split}
\end{equation}
Here $D^2v$ is a quatratic form and $\mathrm{d}u_f\otimes\mathrm{d}u_f$ denotes the tensor
$(X,Y)\mapsto\mathrm{d}u_f(X)\mathrm{d}u_f(Y)$.
In particular,
\begin{equation} \label{estimate of gradient}
    |\nabla v|\approx |\chi'(\log|f|)|\frac{|\partial f|}{|f|}
\end{equation}
and
\begin{equation}\label{eq:hessian-size-estimate}
|D^2v|\le C\left[
 (|\chi''(\log|f|)|+|\chi'(\log|f|)|)
     \frac{|\partial f|^2}{|f|^2}
 +|\chi'(\log|f|)|\frac{|D^2f|}{|f|}
\right].
\end{equation}
Indeed, \eqref{eq:uf-derivatives} and the equivalence of real and complex
tensor norms imply
\[
|\nabla u_f|\le C\frac{|\partial f|}{|f|},
\qquad
|D^2u_f|\le C\left(
\frac{|D^2f|}{|f|}
+\frac{|\partial f|^2}{|f|^2}
\right).
\]
Substituting into the first two identities in \eqref{e92901}, we get both estimates.

\subsection{The smooth one-variable model}\label{sec:model}

At a smooth point of the component $\{h_j=0\}$ which belongs to no other
component, such point exists for any open neighborhood of $0$, as the regular part $Z(f)_{reg}$ of is dense in $Z(f)$ see \cite{Demailly}. Then there are holomorphic coordinates $w=(w_1,\ldots,w_n)$ in which
\begin{equation}\label{eq:local-model-unit}
f(w)=a(w)w_1^{m_j},\qquad a(0)\ne0.
\end{equation}
After shrinking the coordinate neighborhood, the unit has a holomorphic
$m_j$-th root $b$ with $b^{m_j}=a$.  The map
\[
(w_1,w_2,\ldots,w_n)\longmapsto
(b(w)w_1,w_2,\ldots,w_n)
\]
has nonzero Jacobian at $p$. In the new coordinate, $f=w_1^{m_j}$.  Thus the model
\[
V_m(w)=\chi(m\log|w_1|)
\]
verifies necessary conditions appearing below. 

To justify this reduction, write $z=\Phi(w)$ for the local
coordinate change. On relatively compact neighborhoods, $D\Phi$ and $(D\Phi)^{-1}$ are bounded, and
$|\det_{\mathbb R}D\Phi|$ is bounded above and away from zero.
Thus, on $\{w_1\ne0\}$,
\[
|\nabla_z v|(\Phi(w))\approx|\nabla_w V_m(w)|,
\qquad
G(\Phi(w))
\approx
\chi''(m\log|w_1|)\frac{m^2}{|w_1|^2}.
\]
Consequently, the corresponding local $L^p$ integrability
conditions are equivalent in the two coordinate.

\begin{lemma}\label{lem:radial-model}
Let $m\ge1$ and $V_m(w)=\chi(m\log|w_1|)$.  For every $1\le p<\infty$,
the zero extension of the classical Laplacian from $\{w_1\ne0\}$ satisfies
\begin{equation}\label{eq:model-laplace}
\Delta V_m\in L^p_{\mathrm{loc}}
\quad\Longleftrightarrow\quad
\int_{-\infty}^{A}|\chi''(s)|^p
 e^{\frac{2(1-p)}{m}s}\,\mathrm{d} s<\infty.
\end{equation}
For every $1\le p<\infty$,
\begin{equation}\label{eq:model-gradient}
\nabla V_m\in L^p_{\mathrm{loc}}
\quad\Longleftrightarrow\quad
\int_{-\infty}^{A}|\chi'(s)|^p
 e^{\frac{2-p}{m}s}\,\mathrm{d} s<\infty.
\end{equation}
Finally,
\begin{equation}\label{eq:model-w21-necessary}
V_m\in W^{2,1}_{\mathrm{loc}}\quad\Longrightarrow\quad
\chi'\in L^1(({-\infty},A)).
\end{equation}
\end{lemma}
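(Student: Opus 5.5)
The plan is to reduce everything to one complex variable by Fubini and then to the variable $s=m\log|w_1|$. Work on a polydisc $\{|w_1|<r\}\times D'$ with $D'$ bounded. Since $V_m$ depends only on $w_1$, local $L^p$-integrability of any function of $|w_1|$ on such a product is equivalent to integrability over $\{|w_1|<r\}\subset\mathbb C$. Write $w_1=\rho e^{i\theta}$. On $\{w_1\neq0\}$ we have $\log|w_1|$ harmonic and $|\nabla\log|w_1||=1/\rho$. Hence
\[
\Delta V_m=\chi''(m\log\rho)\frac{m^2}{\rho^2},\qquad |\nabla V_m|=\chi'(m\log\rho)\frac{m}{\rho}.
\]
Then
\[
\int_{|w_1|<r}|\Delta V_m|^p=2\pi m^{2p}\int_0^r|\chi''(m\log\rho)|^p\rho^{1-2p}\,\mathrm d\rho .
\]
Substitute $s=m\log\rho$, so that $\rho=e^{s/m}$ and $\mathrm d\rho=\frac1m e^{s/m}\mathrm ds$. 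This gives
\[
\rho^{1-2p}\,\mathrm d\rho=\tfrac1m e^{\frac{2(1-p)}{m}s}\,\mathrm ds,
\]
and therefore \eqref{eq:model-laplace} up to the compact piece $s\in[m\log r,A]$. That piece is harmless because $\chi\in C^2((-\infty,A])$ makes the integrand bounded there. The same computation with $|\chi'|^p\rho^{1-p}$ gives the weight $e^{\frac{2-p}{m}s}$, which is \eqref{eq:model-gradient}.

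For \eqref{eq:model-w21-necessary}, assume $V_m\in W^{2,1}_{\mathrm{loc}}$. Restricting to a.e. slice (ACL characterization), the one-variable function $\phi(w_1)=\chi(m\log|w_1|)$ lies in $W^{2,1}$ near $0\in\mathbb C$. On $\{w_1\ne0\}$ its weak second derivatives agree with the classical ones, and these are $L^1$ functions. The idea is to extract from the Hessian a term behaving like $\chi'/\rho^2$, which the Laplacian misses. The radial second derivative is
\[
\partial_\rho^2\phi=\frac{m^2\chi''-m\chi'}{\rho^2},
\]
and the tangential part $\rho^{-1}\partial_\rho\phi=m\chi'/\rho^2$ is the complement of it in the Laplacian. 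In Cartesian coordinates the Hessian has eigenvalues $\partial_\rho^2\phi$ and $\rho^{-1}\partial_\rho\phi$. Hence
\[
|D^2\phi|\ge \frac{m\chi'(m\log\rho)}{\rho^2}\ge0 .
\]
Integrating, we get $\int_0^r \chi'(m\log\rho)\rho^{-1}\,\mathrm d\rho<\infty$. The substitution $s=m\log\rho$ turns this into $\int_{-\infty}^{m\log r}\chi'(s)\,\mathrm ds<\infty$, i.e. $\chi'\in L^1$.

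The only real subtlety, which I expect to be the main obstacle, is justifying that the weak Hessian of $V_m$ coincides with the classical one a.e. Equivalently, we must rule out a singular part concentrated on $\{w_1=0\}$. This is automatic here: $W^{2,1}_{\mathrm{loc}}$ means the distributional second derivatives are $L^1_{\mathrm{loc}}$ functions. Any $L^1$ function restricted to $\{w_1\ne0\}$ equals the classical derivative there, and $\{w_1=0\}$ is Lebesgue-null. Thus $\|D^2V_m\|_{L^1}$ dominates the integral of the classical Hessian off $\{w_1=0\}$, and the eigenvalue bound applies. No approximation argument is needed for the necessity directions.
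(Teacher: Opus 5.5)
Your proposal is correct and follows essentially the same route as the paper. You use polar integration on a product polydisc with the substitution $s=m\log\rho$ for both the Laplacian and gradient criteria. For the $W^{2,1}$ necessity you bound $|D^2V_m|$ below by the tangential second derivative $m\chi'(m\log\rho)/\rho^{2}$; the paper obtains this by evaluating the second directional derivative along the unit tangent vector $e_\theta$, whereas you use the eigenvalue decomposition, which amounts to the same thing. The ACL slicing step is unnecessary: as you note at the end, the weak Hessian agrees with the classical one off the null set $\{w_1=0\}$, and integrating over the product domain already suffices.
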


\begin{proof}
Write $w_1=re^{i\theta}$ and set $F(r)=\chi(m\log r)$. Direct differentiation shows
\[
F'(r)=\frac{m}{r}\chi'(m\log r),
\qquad
F''(r)=\frac{m^2\chi''(m\log r)-m\chi'(m\log r)}{r^2}.
\]
Since $V_m$ depends only on $w_1$, its Laplacian reduces to the
Laplacian in the $w_1$-plane. Write $w_1=x+iy$ and
$r=(x^2+y^2)^{1/2}$. On $\{w_1\ne0\}$, we have
\begin{align*}
\Delta V_m
&=\frac{\partial^2}{\partial x^2}F(r)
  +\frac{\partial^2}{\partial y^2}F(r)\\
&=F''(r)\frac{x^2+y^2}{r^2}
  +F'(r)\left(\frac{2}{r}-\frac{x^2+y^2}{r^3}\right)\\
&=F''(r)+\frac{F'(r)}{r}\\
&=\frac{m^2}{r^2}\chi''(m\log r).
\end{align*}
The remaining variables $w_2,\ldots,w_n$ contribute only a fixed
positive volume factor when integrating over a product polydisc.
Consequently, on a sufficiently small product polydisc
$P=D(0,r_0)\times P'$, polar integration implies
\begin{align*}
\|\Delta V_m\|_{L^p(P)}^p
&=
2\pi m^{2p}\operatorname{Vol}(P')
\int_0^{r_0}
|\chi''(m\log r)|^p r^{1-2p}\,\mathrm{d}r\\
&=
2\pi m^{2p-1}\operatorname{Vol}(P')
\int_{-\infty}^{m\log r_0}
|\chi''(s)|^p
e^{\frac{2(1-p)}{m}s}\,\mathrm{d}s,
\end{align*}
where the second equality follows from the change of variables
$s=m\log r$. Since changing the finite upper endpoint does not affect
convergence, \eqref{eq:model-laplace} follows.

Similarly, $|\nabla V_m|=|F'(r)|$, and polar integration on the
same product polydisc $P=D(0,r_0)\times P'$ gives
\begin{align*}
\|\nabla V_m\|_{L^p(P)}^p
&=2\pi m^p\operatorname{Vol}(P')
  \int_0^{r_0}
  |\chi'(m\log r)|^p r^{1-p}\,\mathrm{d}r\\
&=2\pi m^{p-1}\operatorname{Vol}(P')
  \int_{-\infty}^{m\log r_0}
  |\chi'(s)|^p e^{\frac{2-p}{m}s}\,\mathrm{d}s.
\end{align*}
Then \eqref{eq:model-gradient} follows.

For $w_1=re^{i\theta}$, let $e_\theta=ie^{i\theta}$ be the unit vector
tangent to the circle $\{|w_1|=r\}$. Along the straight line
$w_1+t e_\theta$, we have
\[
|w_1+t e_\theta|=\sqrt{r^2+t^2},
\]
and therefore
\[
D^2V_m(e_\theta,e_\theta)
=
\left.\frac{\mathrm{d}^2}{\mathrm{d}t^2}\right|_{t=0}
F\bigl(\sqrt{r^2+t^2}\bigr)
=
\frac{F'(r)}{r}.
\]
Since the norm of the Hessian dominates every second directional
derivative and $\chi'\ge0$, it follows that
\[
|D^2V_m|
\ge \frac{F'(r)}{r}
=\frac{m}{r^2}\chi'(m\log r).
\]
If $V_m\in W^{2,1}_{\mathrm{loc}}$, then its classical Hessian away from
$\{w_1=0\}$ is integrable.  Polar integration therefore yields
\[
\infty>
\int_0^{r_0}|D^2V_m|r\,\mathrm{d} r
\ge
m\int_0^{r_0}\chi'(m\log r)\frac{\mathrm{d} r}{r}
=\int_{-\infty}^{m\log r_0}\chi'(s)\,\mathrm{d} s,
\]
which is exactly \eqref{eq:model-w21-necessary}.
\end{proof}

\subsection{Coarea formula and level-set notation}

For a non-negative measurable function $\Phi$, we have the following coarea formula
\cite{EvansGariepy,Federer}
\begin{equation}\label{eq:coarea}
\int_{K\cap\{0<|f|<r_0\}}\Phi\,\mathrm{d}V
=
\int_0^{r_0}
\left(
\int_{K\cap\{|f|=r\}}
\frac{\Phi}{|\partial f|}\,\mathrm{d}S
\right)\mathrm{d}r,
\end{equation}
where $K\Subset U$ and $r_0>0$ is sufficiently small.
By \cite[Lemma~2.3]{PSWW}, these level sets are smooth
real-analytic hypersurfaces near $K$ for every $0<r\le r_0$,
so the level-set integrals are understood in the usual sense.

In the local integrability estimates below, it suffices to
integrate over $K\cap\{0<|f|<r_0\}$: the integrands in this paper are bounded on $K\cap\{|f|\ge r_0\}$, whose
contribution is therefore finite. The number $r_0$ may depend
on $K$ and will be decreased when necessary.  We also use the Dirac delta notation for the same formula
\begin{equation}\label{eq:delta-coarea}
\int_{K\cap\{|f|=r\}}
\frac{\Psi}{|\partial f|}\,\mathrm{d}S
=
\int_K\Psi\,\delta(|f|-r)\,\mathrm{d}V.
\end{equation}
To justify this identity in the distributional sense with respect to $r$, let $\Psi$ be smooth and take
$\eta\in C_0^\infty((0,r_0))$. By the definition of the Dirac delta, we have
\begin{align*}
&\int_0^{r_0}\eta(r)
\left(
\int_K\Psi(z)\delta(|f(z)|-r)\,\mathrm{d}V(z)
\right)\mathrm{d}r\\
&\qquad=
\int_K\Psi(z)\eta(|f(z)|)\,\mathrm{d}V(z)\\
&\qquad=
\int_0^{r_0}\eta(r)
\left(
\int_{K\cap\{|f|=r\}}
\frac{\Psi}{|\partial f|}\,\mathrm{d}S
\right)\mathrm{d}r,
\end{align*}
where the last equality follows from \eqref{eq:coarea}.
Thus \eqref{eq:delta-coarea} is the distributional formulation
of the coarea formula.

For distributional integration by parts, fix a relatively
compact smooth domain $\Omega\Subset U$ and set
\begin{equation}\label{eq:truncated-domain}
\Omega_\varepsilon=\Omega\cap\{|f|>\varepsilon\}.
\end{equation}
By Sard's theorem, choose $\varepsilon_j\rightarrow0$ such that
each $\varepsilon_j$ is a regular value of both $|f|$ on
$\Omega\setminus Z(f)$ and its restriction to
$\partial\Omega\setminus Z(f)$.
The moving boundary then meets $\partial\Omega$ transversely,
so $\Omega_{\varepsilon_j}$ has piecewise smooth Lipschitz
boundary. All subsequent integrations by parts on
$\Omega_\varepsilon$ and the associated limits are understood
along this sequence, with $\varepsilon=\varepsilon_j$.
Since the test functions are supported away from
$\partial\Omega$, only the boundary
$\Omega\cap\{|f|=\varepsilon\}$ contributes.

\section{Divisorial estimates and small-level-set energies}
\label{sec:resolution}

The Hessian estimate \eqref{eq:hessian-size-estimate}, together with the
coarea formula, shows that the endpoint $W^{2,1}$ argument requires, in
addition to the first-order energy estimates, uniform control of
\[
\int_{K\cap\{|f|=p\}}
\frac{|D^2f|}{|\partial f|}\,\mathrm{d}S
\]
as $p\to0$. Near a smooth point of $Z(f)$, one can choose holomorphic
coordinates in which $f=w_1^m$, as explained in
Section~\ref{sec:model}, and estimate the integral directly.
To obtain a uniform bound near the singular points of $Z(f)$,
we pass to a log-resolution of $\operatorname{Div}(f)$ and
compare the vanishing orders of $f$, its derivatives, and
the Jacobian of the resolution.

\subsection{Divisorial orders of derivatives}

After shrinking $U$, choose a log-resolution
\[
\pi:Y\longrightarrow U
\]
of $\operatorname{Div}(f)$. Thus $Y$ is smooth, $\pi$ is proper
and bimeromorphic, its restriction over $U\setminus Z(f)$ is
biholomorphic, and $\operatorname{Div}(f\circ\pi)$ has simple
normal crossings; see \cite{Hironaka,Kollar}.

Let
\[
J_f=(f_1,\ldots,f_n)\subset\mathcal O_U
\]
be the Jacobian ideal of $f$, where
$f_j=\partial f/\partial z_j$.
For a prime divisor $E\subset Y$ and a nonzero holomorphic
function $g$ on $U$, set$\nu_E(g):=\operatorname{ord}_E(g\circ\pi).$
We write
\[
\alpha_E=\nu_E(f),\qquad
\beta_E=\nu_E(J_f):=\min_{1\le j\le n}\nu_E(f_j),\qquad
A_U(E)=1+\nu_E(K_Y-\pi^*K_U).
\]
Let $\kappa=\nu_E(K_Y-\pi^*K_U)$ and choose
\[
x\in E_{\mathrm{reg}}
\setminus\operatorname{Supp}(K_Y-\pi^*K_U-\kappa E).
\]
We write $\operatorname{jac}(\pi)=\det D\pi$.
There are local coordinates $y=(y_1,\ldots,y_n)$ centered
at $x$ such that
\[
E=\{y_1=0\},\qquad
\operatorname{jac}(\pi)=y_1^\kappa j_0(y),\qquad j_0(x)\ne0.
\]
In these coordinates, $A_U(E)=\kappa+1$.
If $D_j$ is the strict transform of $\{h_j=0\}$, then
\[
\alpha_{D_j}=m_j,\qquad
\beta_{D_j}=m_j-1,\qquad
A_U(D_j)=1.
\]

Proposition~\ref{prop:derivative-discrepancy} shows that
differentiation by a holomorphic vector field decreases
the vanishing order along $E$ by at most $A_U(E)$.
Corollary~\ref{cor:first-second-valuations} applies this
bound to the first and second derivatives of $f$.
Proposition~\ref{prop:exceptional-strictness} shows that
these inequalities are strict when $E$ is exceptional.
Note that the local order computations and the rank argument along
exceptional divisors will be used in the proof of
Lemma~\ref{lem:hessian-exponents}.

\begin{proposition}\label{prop:derivative-discrepancy}
Let $X$ be a complex manifold, let $E$ be a prime divisor on a smooth
birational model $\pi:Y\to X$, and let $\xi$ be a local holomorphic vector
field on $X$.  For every nonzero holomorphic function $g$, we have
\begin{equation}\label{eq:derivative-discrepancy}
\nu_E(\xi (g))+A_X(E)\ge \nu_E(g).
\end{equation}
\end{proposition}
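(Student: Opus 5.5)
The plan is to reduce to a local computation at a general point of $E$ and compare the pullback of $\xi(g)$ with derivatives of $g\circ\pi$ in coordinates on $Y$. Since $\nu_E$ is determined at a generic point of $E$, we may work near a point $x\in E_{\mathrm{reg}}$ chosen as in the paragraph preceding the statement, with coordinates $y=(y_1,\dots,y_n)$ centered at $x$, $E=\{y_1=0\}$, and $\operatorname{jac}(\pi)=y_1^\kappa j_0(y)$, $j_0(x)\neq0$, $\kappa=A_X(E)-1$. Write $G=g\circ\pi$; then $\nu_E(g)=\operatorname{ord}_{y_1}G$, generically along $E$.

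The key identity is the chain rule $dG=\pi^*dg$. In matrix form, $\nabla_y G=(D\pi)^T(\nabla_z g\circ\pi)$. Inverting with the adjugate gives
\[
\operatorname{jac}(\pi)\,(\nabla_z g\circ\pi)=\operatorname{adj}(D\pi)^T\,\nabla_y G .
\]
The entries of $\operatorname{adj}(D\pi)$ are holomorphic on $Y$. Now write $\xi=\sum_j a_j\partial_{z_j}$ with $a_j$ holomorphic. Then
\[
y_1^\kappa j_0\,(\xi(g)\circ\pi)=\sum_{j}(a_j\circ\pi)\,\bigl(\operatorname{adj}(D\pi)^T\nabla_yG\bigr)_j .
\]
The right-hand side is a holomorphic combination of the partial derivatives $\partial G/\partial y_k$, so its order along $y_1=0$ is at least $\min_k\operatorname{ord}_{y_1}(\partial_{y_k}G)$.

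Next we bound these derivative orders. If $G=y_1^{a}H$ with $a=\nu_E(g)$ and $H\not\equiv0$ on $E$, then $\partial_{y_1}G$ has order $\ge a-1$, and for $k\ge2$, $\partial_{y_k}G=y_1^a\partial_{y_k}H$ has order $\ge a$. Hence the right side has order $\ge a-1$. Since $j_0$ is a unit,
\[
\kappa+\nu_E(\xi(g))\ge a-1,
\]
that is, $\nu_E(\xi(g))+A_X(E)\ge\nu_E(g)$, which is \eqref{eq:derivative-discrepancy}. If $\xi(g)\equiv0$, its order is $+\infty$ and there is nothing to prove.

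The only delicate points are bookkeeping. First, $\nu_E$ must be computable at a generic point of $E$; this is where the choice of $x$ outside the other components of the relative canonical divisor matters, since it is what makes $j_0$ a unit there. Second, one must keep the correct orientation of the adjugate identity so that only holomorphic coefficients appear. I expect no genuine obstacle beyond verifying that the adjugate entries are holomorphic, hence of order $\ge0$; this is immediate because $D\pi$ is holomorphic.
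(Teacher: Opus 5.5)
Your proof is correct and is essentially the paper's argument. The paper pulls $\xi$ back to the meromorphic field with coefficients $(D\pi)^{-1}(a\circ\pi)=\operatorname{adj}(D\pi)(a\circ\pi)/\operatorname{jac}(\pi)$, which have $E$-order at least $-\kappa$, and you run the same adjugate computation on the transposed identity $\nabla_y G=(D\pi)^{T}(\nabla_z g\circ\pi)$. One small advantage of your version is that $\operatorname{adj}(D\pi)^{T}(D\pi)^{T}=\operatorname{jac}(\pi)\,I$ holds everywhere on the chart, so your identity is between holomorphic functions. You therefore do not need the paper's step of extending $\widetilde\xi(g\circ\pi)=(\xi g)\circ\pi$ from the biholomorphic locus as meromorphic functions.
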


\begin{proof}
Choose $x\in E_{\mathrm{reg}}$ outside the intersections of $E$ with all other irreducible components of $\operatorname{Div}(g\circ\pi)$ and $\operatorname{Div}(\operatorname{jac}(\pi))$.  Choose holomorphic coordinates $y=(y_1,\ldots,y_n)$ centered at $x$
such that $E=\{y_1=0\}$, and write
\[
g\circ\pi=y_1^\alpha u(y),\qquad u(x)\ne0,
\]
where $\alpha=\nu_E(g)$.  Take coordinate
$z=(z_1,\ldots,z_n)$ on $X$ near the image of this point $\pi(x)$ and write
\[
\xi=\sum_{i=1}^n a_i(z)\frac{\partial}{\partial z_i},
\qquad
D\pi=\left(\frac{\partial z_i}{\partial y_p}\right).
\]
If $\kappa=\nu_E(\operatorname{jac}(\pi))$, then $A_X(E)=\kappa+1$.  On the locus where $\pi$ is
biholomorphic, the meromorphic pullback of $\xi$ is
\[
\widetilde\xi=\pi^*\xi
=\sum_{p=1}^n B_p(y)\frac{\partial}{\partial y_p},
\qquad
(B_1,\ldots,B_n)^{\mathsf T}
=(D\pi)^{-1}(a_1\circ\pi,\ldots,a_n\circ\pi)^{\mathsf T}.
\]
Since $(D\pi)^{-1}=\frac{\operatorname{adj}(D\pi)}{\operatorname{jac}(\pi)}$, where $\operatorname{adj}(D\pi)$ is the adjugate matrix of $D\pi$, 
and every elements of $\operatorname{adj}(D\pi)$ is holomorphic, each coefficient
satisfies $\nu_E(B_p)\ge-\kappa$.  On the other hand,
\[
\frac{\partial(g\circ\pi)}{\partial y_1}
=y_1^{\alpha-1}
\left(\alpha u+y_1\frac{\partial u}{\partial y_1}\right),
\]
so its $E$-order is at least $\alpha-1$, while for $p\ge2$,
\[
\frac{\partial(g\circ\pi)}{\partial y_p}
=y_1^\alpha\frac{\partial u}{\partial y_p}
\]
has $E$-order at least $\alpha$.  It follows term by term that
\[
\nu_E\left(
B_1\frac{\partial(g\circ\pi)}{\partial y_1}
\right)\ge\alpha-\kappa-1,
\qquad
\nu_E\left(
B_p\frac{\partial(g\circ\pi)}{\partial y_p}
\right)\ge\alpha-\kappa
\quad (p\ge2).
\]
On the open set where $\pi$ is biholomorphic, it follows from the definition of
$\widetilde\xi=\pi^*\xi$ and the chain rule that
\[
\widetilde\xi(g\circ\pi)=(\xi (g))\circ\pi.
\]
The left-hand side is meromorphic on the whole chart, whereas the
right-hand side is holomorphic. Since they agree on a dense open
subset, they represent the same meromorphic function on the chart.
Consequently, their orders along $E$ coincide. We
obtain
\[
\nu_E(\xi (g))
=\nu_E\bigl(\widetilde\xi(g\circ\pi)\bigr)
\ge\alpha-\kappa-1
=\nu_E(g)-A_X(E).
\]
This completes the proof.
\end{proof}

\begin{corollary}
\label{cor:first-second-valuations}
For every prime divisor $E$ over $U$,
\begin{equation}\label{eq:first-second-valuations}
\beta_E+A_U(E)\ge\alpha_E,
\qquad
\nu_E(f_{ij})+2A_U(E)\ge\alpha_E.
\end{equation}
\end{corollary}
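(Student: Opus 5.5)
The plan is to apply Proposition~\ref{prop:derivative-discrepancy} directly, once for the first-order bound and twice for the second-order bound, taking $X=U$ and $g=f$ or $g=f_j$.

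For the first inequality, fix $j$ and apply Proposition~\ref{prop:derivative-discrepancy} with the constant coefficient vector field $\xi=\partial/\partial z_j$ on $U$. If $f_j\equiv0$, then $\nu_E(f_j)=+\infty$ and there is nothing to prove. Otherwise \eqref{eq:derivative-discrepancy} gives
\[
\nu_E(f_j)+A_U(E)\ge\nu_E(f)=\alpha_E .
\]
Since $\beta_E=\min_j\nu_E(f_j)$, choosing $j$ to realize the minimum yields $\beta_E+A_U(E)\ge\alpha_E$. Because $f$ is nonconstant, at least one $f_j$ is nonzero, so $\beta_E$ is finite.

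For the second inequality, fix $i,j$. If $f_{ij}\equiv0$, the bound is trivial. Otherwise $f_j\not\equiv0$, since $f_{ij}=\partial_{z_i}f_j$. Applying Proposition~\ref{prop:derivative-discrepancy} with $\xi=\partial/\partial z_i$ and $g=f_j$ gives
\[
\nu_E(f_{ij})+A_U(E)\ge\nu_E(f_j).
\]
Combining this with the first step, $\nu_E(f_j)+A_U(E)\ge\alpha_E$, we obtain
\[
\nu_E(f_{ij})+2A_U(E)\ge\alpha_E .
\]

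The only point needing care is the applicability of the proposition. The prime divisor $E$ over $U$ must lie on a smooth model $\pi:Y\to U$, and the proposition is stated for an arbitrary such model, so it also covers the strict transforms $D_j$ as well as the exceptional divisors. Beyond this, and the convention $\nu_E(0)=+\infty$ for vanishing derivatives, there is no real obstacle.
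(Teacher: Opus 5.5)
Your proposal is correct and follows the paper's proof essentially verbatim: apply Proposition~\ref{prop:derivative-discrepancy} once with $g=f$ to get $\nu_E(f_j)+A_U(E)\ge\alpha_E$ and take the minimum, then apply it again with $g=f_j$ and chain the two inequalities. You also handle identically vanishing derivatives through the convention $\nu_E(0)=+\infty$, just as the paper does.
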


\begin{proof}
For each $i$, applying Proposition~\ref{prop:derivative-discrepancy} to
$g=f$ and $\xi=\partial/\partial z_i$ implies
\[
\nu_E(f_i)+A_U(E)\ge\nu_E(f)=\alpha_E.
\]
Taking the minimum over $i$ proves
$\beta_E+A_U(E)\ge\alpha_E$.  Applying the same proposition once more, now
to $g=f_i$ and $\xi=\partial/\partial z_j$, yields
\[
\nu_E(f_{ij})+A_U(E)\ge\nu_E(f_i)
\ge\alpha_E-A_U(E).
\]
Thus $\nu_E(f_{ij})+2A_U(E)\ge\alpha_E$.  If one of the derivatives
vanishes identically, its vanishing order is $+\infty$; hence the same
inequality still holds under the usual convention
$+\infty+c=+\infty\ge\alpha_E$.
\end{proof}

For exceptional divisors, the inequalities in Corollary~\ref{cor:first-second-valuations} are strict, as the following proposition shows.

\begin{proposition}\label{prop:exceptional-strictness}
If $E$ is exceptional divisor of resolution $\pi$, then
\[
\nu_E(f_i)+A_U(E)>\alpha_E,
\qquad
\nu_E(f_{ij})+2A_U(E)>\alpha_E
\]
for all $i,j$.
\end{proposition}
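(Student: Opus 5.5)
The plan is to sharpen the single lossy step in the proof of Proposition~\ref{prop:derivative-discrepancy}. In that proof the bound $\nu_E(\xi(g))\ge\alpha-\kappa-1$ came only from the term $B_1\,\partial(g\circ\pi)/\partial y_1$. There we used $\nu_E(B_1)\ge-\kappa$ and $\nu_E(\partial_{y_1}(g\circ\pi))\ge\alpha-1$. The terms with $p\ge2$ already have order $\ge\alpha-\kappa$. So it suffices to show the following: when $E$ is exceptional, the first coefficient of $\widetilde\xi=\pi^*\xi$ satisfies
\[
\nu_E(B_1)\ge 1-\kappa .
\]
This would give the strengthened inequality $\nu_E(\xi(g))\ge\nu_E(g)-A_U(E)+1$ for every nonzero holomorphic $g$ and every holomorphic vector field $\xi$.

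To prove this, I write $B_1=\operatorname{jac}(\pi)^{-1}\sum_i \operatorname{adj}(D\pi)_{1i}\,(a_i\circ\pi)$. The entry $\operatorname{adj}(D\pi)_{1i}$ is, up to sign, the $(n-1)\times(n-1)$ minor of $D\pi$ obtained by deleting row $i$ and column $1$. It is therefore a maximal minor of the $n\times(n-1)$ matrix whose columns are $\partial\pi/\partial y_2,\dots,\partial\pi/\partial y_n$. Restricted to $E=\{y_1=0\}$, these columns are the partial derivatives of $\pi|_E$ in the coordinates $y_2,\dots,y_n$ of $E$. They span the image of $d(\pi|_E)$.

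Since $E$ is exceptional, $\pi(E)$ is an analytic subset of codimension at least $2$ in $U$. Hence $\pi|_E$ has rank at most $n-2$ everywhere on $E$. Otherwise it would have rank $n-1$ at some point, and by the rank theorem its image would contain a piece of hypersurface. Consequently every maximal minor of these columns vanishes identically on $E$, so $\nu_E(\operatorname{adj}(D\pi)_{1i})\ge1$. Since $a_i\circ\pi$ is holomorphic, this yields $\nu_E(B_1)\ge1-\kappa$, as claimed. The step I expect to need the most care is this rank/dimension argument: that $\pi(E)$ has codimension $\ge2$, which is the standard consequence of $\pi$ being an isomorphism over the complement of a codimension-two locus in the normal (indeed smooth) base $U$, and that this forces all these minors to vanish along $E$ rather than merely at a point.

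With the strengthened inequality, the rest is formal, in the spirit of Corollary~\ref{cor:first-second-valuations}. Taking $g=f$ and $\xi=\partial/\partial z_i$ gives $\nu_E(f_i)\ge\alpha_E-A_U(E)+1$, which is the first claim. Taking $g=f_i$ and $\xi=\partial/\partial z_j$ gives
\[
\nu_E(f_{ij})\ge\nu_E(f_i)-A_U(E)+1\ge\alpha_E-2A_U(E)+2>\alpha_E-2A_U(E).
\]
If $f_i\equiv0$ or $f_{ij}\equiv0$, the order is $+\infty$ and the inequalities hold trivially. As in the proof of Proposition~\ref{prop:derivative-discrepancy}, the generic point $x\in E$ is chosen depending on $g$. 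This is harmless, since $\nu_E$ is determined at any generic point of $E$.
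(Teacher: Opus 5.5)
Your proposal is correct and is essentially the paper's argument. The paper likewise shows that the cofactors $\operatorname{adj}(D\pi)_{1i}$ are divisible by $y_1$, because $d(\pi|_E)$ has rank at most $n-2$ when $\dim\pi(E)\le n-2$. This yields $\pi^*\xi=y_1^{-\kappa}\bigl(a_1y_1\partial_{y_1}+\sum_{p\ge2}a_p\partial_{y_p}\bigr)$, hence $\nu_E(\xi g)\ge\nu_E(g)-A_U(E)+1$, and both inequalities then follow by iterating. The only cosmetic difference is that the paper obtains the rank bound at a generic point of $E$ and extends the vanishing of the minors by analytic continuation, whereas you get it at every point of $E$ via the constant rank theorem.
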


\begin{proof}
Choose a smooth point $x\in E$ lying on no other component of either
$\operatorname{Div}(f\circ\pi)$ or the Jacobian divisor, and at which
$d\pi|_{TE}$ has its generic rank. Choose local coordinates
$y=(y_1,\ldots,y_n)$ centered at $x$ such that $E=\{y_1=0\}$. We may write
\[
\operatorname{jac}(\pi)=y_1^\kappa j_0(y),\qquad j_0(x)\ne0.
\]
Then $\kappa=\operatorname{ord}_E(K_Y-\pi^*K_U)$, and hence
$A_U(E)=\kappa+1$.  We first prove that the pullback of every 
holomorphic vector field $\xi$ on $U$ has the form
\begin{equation}\label{eq:exceptional-vector-field}
\pi^*\xi
=y_1^{-\kappa}\left(
a_1(y)y_1\frac{\partial}{\partial y_1}
+\sum_{p=2}^n a_p(y)\frac{\partial}{\partial y_p}
\right),
\end{equation}
where the $a_p$ are holomorphic for all $p \in \{1,2,\dots,n\}$.

It is enough to check this for a coordinate vector field
$\partial/\partial z_i$.  On the biholomorphic locus,
\[
\pi^*\frac{\partial}{\partial z_i}
=\sum_{p=1}^n ((D\pi)^{-1})_{p i}
\frac{\partial}{\partial y_p},
\qquad
((D\pi)^{-1})_{p i}
=\frac{\operatorname{adj}(D\pi)_{p i}}{y_1^\kappa j_0(y)}.
\]
The coefficient of $\partial/\partial y_1$ involves the cofactor
$\operatorname{adj}(D\pi)_{1i}$.  This cofactor is, up to sign, an
$(n-1)\times(n-1)$ minor of the matrix obtained by deleting the column
corresponding to the normal direction $\partial/\partial y_1$.  Since $E=\{y_1=0\}$, its tangent space is spanned by
$\partial/\partial y_2,\ldots,\partial/\partial y_n$.
Thus deleting the first column of $D\pi$ gives the matrix of
\[
d\pi_x|_{T_xE}:T_xE\longrightarrow T_{\pi(x)}U.
\]
Because $E$ is exceptional, $\dim_{\mathbb C}\pi(E)\le n-2$,
so this matrix has rank at most $n-2$ at a generic point of $E$.
Its $(n-1)\times(n-1)$ minors therefore vanish on a dense open
subset of $E$, and hence on all of $E$ because it is holomorphic. Consequently
\[
\operatorname{adj}(D\pi)_{1i}=y_1 b_i(y)
\]
for a holomorphic function $b_i$.  No additional factor is needed for the
other cofactors.  Dividing by $y_1^\kappa j_0(y)$ proves
\eqref{eq:exceptional-vector-field}.

Now write $f\circ\pi=y_1^\alpha u(y),$
where $u$ is a holomorphic unit near $x$.  Applying
\eqref{eq:exceptional-vector-field} gives
\begin{align*}
(\xi f)\circ\pi
&=y_1^{-\kappa}\left[
a_1y_1\frac{\partial}{\partial y_1}(y_1^\alpha u)
+\sum_{p=2}^n
a_p\frac{\partial}{\partial y_p}(y_1^\alpha u)
\right]\\
&=y_1^{\alpha-\kappa}\left[
a_1\left(\alpha u+y_1\frac{\partial u}{\partial y_1}\right)
+\sum_{p=2}^n a_p\frac{\partial u}{\partial y_p}
\right].
\end{align*}
Therefore
\[
\nu_E(\xi f)\ge\alpha-\kappa
=\alpha-A_U(E)+1.
\]
Taking $\xi=\partial/\partial z_i$ proves
\[
\nu_E(f_i)+A_U(E)\ge\alpha_E+1>\alpha_E.
\]

The preceding computation applies to any nonzero holomorphic function.
Applying it to $f_i$ and $\xi=\partial/\partial z_j$, and then using
the first-order estimate, we obtain
\[
\nu_E(f_{ij})
\ge \nu_E(f_i)-A_U(E)+1
\ge \alpha_E-2A_U(E)+2.
\]
Thus
\[
\nu_E(f_{ij})+2A_U(E)\ge\alpha_E+2>\alpha_E.
\]
If $f_i$ vanishes identically, the conclusion follows under the
convention $\nu_E(0)=+\infty$.
\end{proof}

\subsection{A monomial delta integral}

 This subsection is devoted to an elementary calculus result that will be used in the proof of Proposition \ref{prop:second-energy}.

\begin{lemma}\label{lem:monomial-delta}
Let $a_i>0$, $b_i>-2$, $0<R<1$, and
\[
I(t)=\int_{(0,R)^k}\prod_{i=1}^k r_i^{b_i+1}
        \delta(\prod_{i=1}^k r_i^{a_i}-t)\,\mathrm{d} r_1\cdots\,\mathrm{d} r_k.
\]
Put $\lambda_i=(b_i+2)/a_i$.
If $\lambda_i\ge1$ for all $i$ and $\#\{i \in \{1,...,k\}:\lambda_i =1\} \leq 1$, then
$I(t)\le C$ for $0<t<t_0$.
\end{lemma}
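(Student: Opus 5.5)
The plan is to make a sequence of explicit changes of variables that turn $I(t)$ into an integral of an exponential over a slice of a hyperplane, and then bound that slice integral by a product of one-dimensional exponential integrals. The key step is the observation that the factor $t^{-1}$ from the delta function cancels exactly against the weight. That cancellation is where $\lambda_i\ge1$ enters. The hypothesis that at most one $\lambda_i$ equals $1$ is what makes the remaining integral converge.

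\emph{Step 1 (power substitution).} Substitute $s_i=r_i^{a_i}$. Then $r_i^{b_i+1}\,\mathrm{d}r_i=a_i^{-1}s_i^{\lambda_i-1}\,\mathrm{d}s_i$, and
\[
I(t)=\Bigl(\prod_i a_i^{-1}\Bigr)\int_{\prod_i(0,R_i)}\prod_{i=1}^k s_i^{\lambda_i-1}\,\delta\Bigl(\prod_i s_i-t\Bigr)\,\mathrm{d}s,\qquad R_i=R^{a_i}<1.
\]
If $k=1$, the delta function evaluates the integrand at $s_1=t$, so $I(t)=a_1^{-1}t^{\lambda_1-1}\le a_1^{-1}$ for $t<1$. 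This settles $k=1$, and from now on we take $k\ge2$.

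\emph{Step 2 (logarithmic substitution).} Put $s_i=e^{-x_i}$, so that $x_i>c_i:=-\log R_i>0$ and $\mathrm{d}s_i=s_i\,\mathrm{d}x_i$. The integrand becomes $e^{-\sum\lambda_ix_i}\,\delta(e^{-\sum x_i}-t)$. Set $L=-\log t$. The function $g(x)=e^{-\sum x_i}$ has $|\nabla g|=\sqrt{k}\,t$ on the hyperplane $\Sigma_L=\{\sum x_i=L\}$. By the coarea interpretation \eqref{eq:delta-coarea}, this gives
\[
I(t)=\frac{C}{\sqrt{k}\,t}\int_{\Sigma_L\cap\{x_i>c_i\}}e^{-\sum_i\lambda_ix_i}\,\mathrm{d}\sigma .
\]
Now write $\lambda_i=1+\mu_i$ with $\mu_i\ge0$. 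On $\Sigma_L$ we have $e^{-\sum x_i}=t$, so the factor $t^{-1}$ cancels and
\[
I(t)=C'\int_{\Sigma_L\cap\{x_i>c_i\}}e^{-\sum_i\mu_ix_i}\,\mathrm{d}\sigma .
\]

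\emph{Step 3 (bounding the slice integral).} By hypothesis, at most one $\mu_i$ vanishes. Relabel so that $\mu_i>0$ for all $i\ge2$. Parametrize $\Sigma_L$ by $(x_2,\dots,x_k)$, solving $x_1=L-\sum_{i\ge2}x_i$; the surface element is $\sqrt{k}\,\mathrm{d}x_2\cdots\mathrm{d}x_k$. Since $\mu_1x_1\ge0$, we can drop the factor $e^{-\mu_1x_1}$. Also dropping the constraint $x_1>c_1$, we obtain
\[
I(t)\le C'\sqrt{k}\prod_{i=2}^k\int_{c_i}^\infty e^{-\mu_ix_i}\,\mathrm{d}x_i=C''\prod_{i=2}^k\frac{e^{-\mu_ic_i}}{\mu_i}.
\]
This bound is independent of $t$, which proves the lemma for any $t_0\le1$.

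The main point needing care is the rigorous handling of the delta function in Step 2. I would justify it in the distributional sense in $t$, exactly as for \eqref{eq:delta-coarea}. Pair $I(t)$ with a test function $\eta(t)$ and apply the coarea formula to $g$. This shows that the displayed slice integral is a representative of $I(t)$ for almost every $t$; the slice integral is also continuous in $L$, so it is the natural pointwise value. The convergence itself is elementary. It fails precisely when two of the $\mu_i$ vanish, since then the slice integral grows like $L$; this is why the hypothesis $\#\{i:\lambda_i=1\}\le1$ is needed.
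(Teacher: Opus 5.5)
Your proof is correct and is essentially the paper's argument. Your variables $x_i=-\log\bigl(r_i^{a_i}\bigr)$ are exactly the paper's $y_i=a_ix_i$, and the mechanism is the same: the factor $t^{-1}$ cancels on the slice $\sum x_i=L$, the slice is projected onto $(x_2,\dots,x_k)$, and the result is bounded by a product of one-dimensional exponential integrals. The differences are minor. You treat $k=1$ separately, and your single bound also covers the case where all $\lambda_i>1$, which the paper handles instead with a separate decay estimate $e^{-\eta T}(1+T)^{k-1}$.
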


\begin{proof}
Put
\[
x_i=-\log r_i,\qquad T=-\log t,\qquad L=-\log R.
\]
Then $x_i>L$, $r_i=e^{-x_i}$, and
$r_i^{b_i+1}\,\mathrm{d} r_i=e^{-(b_i+2)x_i}\,\mathrm{d} x_i$ after reversing
the limits.  Moreover,
\[
\delta\left(e^{-\sum_i a_i x_i}-e^{-T}\right)
=e^T\delta\left(\sum_i a_i x_i-T\right),
\]
because the derivative of $s\mapsto e^{-s}$ at $s=T$ has absolute value
$e^{-T}$.  Under the change of variables $y_i=a_ix_i$, we have
\[
\mathrm{d}x_1\cdots\mathrm{d}x_k
=\frac{1}{a_1\cdots a_k}\,
\mathrm{d}y_1\cdots\mathrm{d}y_k,
\qquad
\sum_{i=1}^k(b_i+2)x_i
=\sum_{i=1}^k\lambda_i y_i.
\]
Moreover, $x_i>L$ becomes $y_i>a_iL$. Therefore,
\begin{align*}
I(t)
&=e^T\int_{(L,\infty)^k}
e^{-\sum_i(b_i+2)x_i}
\delta\left(\sum_i a_ix_i-T\right)
\,\mathrm{d}x_1\cdots\mathrm{d}x_k\\
&=\frac{e^T}{a_1\cdots a_k}
\int_{\{y_i>a_iL\}}
e^{-\sum_i\lambda_i y_i}
\delta\left(\sum_i y_i-T\right)
\,\mathrm{d}y_1\cdots\mathrm{d}y_k.
\end{align*}
Since
\[
\nabla\left(\sum_{i=1}^k y_i\right)=(1,\ldots,1),
\qquad
\left|\nabla\left(\sum_{i=1}^k y_i\right)\right|=\sqrt{k},
\]
by the coarea formula, we have
\[
I(t)
=\frac{e^T}{\sqrt{k}\,a_1\cdots a_k}
\int_{\substack{\sum_i y_i=T\\y_i>a_iL}}
e^{-\sum_i\lambda_i y_i}\,
\mathrm{d}S.
\]
  Since $\sum_i y_i=T$, this becomes
\begin{equation}\label{eq:monomial-simplex}
I(t)=\frac{1}{\sqrt{k}\,a_1\cdots a_k}
\int_{\substack{\sum_i y_i=T\\y_i>a_iL}}
e^{-\sum_i(\lambda_i-1)y_i}\,
\mathrm{d}S.
\end{equation}

Assume first that $\lambda_i\ge1$ for all $i$ and that equality holds for
exactly one index.  Relabel so that $\lambda_1=1$.
The orthogonal projection of the hyperplane $\sum_i y_i=T$ onto the
$(y_2,\ldots,y_k)$-coordinate changes surface measure by the fixed factor
$\sqrt{k}$.  
Then we have
\begin{align*}
I(t)
&\le C
\int_{\substack{y_i>a_iL,\ i\ge2\\
\sum_{i=2}^k y_i<T-a_1L}}
e^{-\sum_{i=2}^k(\lambda_i-1)y_i}\,
\mathrm{d} y_2\cdots\mathrm{d} y_k\\
&\le C\prod_{i=2}^k
\int_{a_iL}^{\infty}e^{-(\lambda_i-1)y_i}\,\mathrm{d} y_i
<\infty.
\end{align*}
The bound is independent of $T$.

If all $\lambda_i>1$, let
$\eta=\min_i(\lambda_i-1)>0$.  The integrand in
\eqref{eq:monomial-simplex} is at most $e^{-\eta T}$.  The $(k-1)$-dimensional measure of the integration region is bounded by
$C(1+T)^{k-1}$.  Thus
\[
I(t)\le C e^{-\eta T}(1+T)^{k-1},
\]
which is uniformly bounded for sufficiently large $T$. The proof is completed.
\end{proof}

If $\lambda_i=1$ for exactly $q\ge2$ indices and $\lambda_i>1$ for
the remaining indices, then the integral in
\eqref{eq:monomial-simplex} grows on the order of
$T^{q-1}=|\log t|^{q-1}$ as $t\to0$. Hence the uniform bound in
Lemma~\ref{lem:monomial-delta} can fail when equality holds for more
than one index.

\subsection{Second-order level-set energy estimates}

This subsection supplies the uniform second-order level-set bound used in the
endpoint proof of Theorem~\ref{thm:w2-intro}.  On a sufficiently small resolution chart, choose local holomorphic
coordinates $y=(y_1,\ldots,y_n)$ in which $f\circ\pi$ has monomial
form. We write
\begin{equation} \label{eq:snc-chart}
    F:=f\circ\pi=y_1^{a_1}\cdots y_k^{a_k},
    \qquad
    \operatorname{jac}(\pi)
    =j_0(y)y_1^{\kappa_1}\cdots y_k^{\kappa_k},
\end{equation}
where $j_0$ is a unit. We use $i,j$ for the $z$-coordinate indices,
$p,q$ for the $y$-coordinate indices, and $\ell$ for the divisor
components, with $1\le i,j,p,q\le n$ and $1\le\ell\le k$.
Here we calculate the expression of $\operatorname{jac}(\pi)^2(f_{ij}\circ\pi)$. Fix $1\leq i,j\leq n$. Since
\[
    (D\pi)^{-1}
    =\frac{\operatorname{adj}(D\pi)}{\operatorname{jac}(\pi)}
\]
on the biholomorphic locus, the lifted coordinate vector field is
\[
    \pi^*\left(\frac{\partial}{\partial z_i}\right)
    =
    \sum_{p=1}^n
    ((D\pi)^{-1})_{p i}\frac{\partial}{\partial y_p}
    =
    \sum_{p=1}^n
    \frac{\operatorname{adj}(D\pi)_{p i}}{\operatorname{jac}(\pi)}\frac{\partial}{\partial y_p}.
\]
Consequently,
\[
    (f_j)\circ\pi
    =
    \sum_{q=1}^n
    \frac{\operatorname{adj}(D\pi)_{q j}}{\operatorname{jac}(\pi)}F_q.
\]
Applying the lifted vector field corresponding to
$\partial/\partial z_i$ and using the product rule, we obtain
\begin{align*}
    (f_{ij})\circ\pi
    &=
    \sum_{p=1}^n
    \frac{\operatorname{adj}(D\pi)_{p i}}{\operatorname{jac}(\pi)}
    \partial_p
    \left(
        \sum_{q=1}^n
        \frac{\operatorname{adj}(D\pi)_{q j}}{\operatorname{jac}(\pi)}F_q
    \right)\\
    &=
    \sum_{p,q=1}^n
    \frac{\operatorname{adj}(D\pi)_{p i}\operatorname{adj}(D\pi)_{q j}}{\operatorname{jac}(\pi)^2}F_{pq}
    +
    \sum_{p,q=1}^n
    \frac{\operatorname{adj}(D\pi)_{p i}}{\operatorname{jac}(\pi)}
    \partial_p\left(\frac{\operatorname{adj}(D\pi)_{q j}}{\operatorname{jac}(\pi)}\right)F_q.
\end{align*}
Since
\[
    \partial_p\left(\frac{\operatorname{adj}(D\pi)_{q j}}{\operatorname{jac}(\pi)}\right)
    =
    \frac{\partial_p \operatorname{adj}(D\pi)_{q j}}{\operatorname{jac}(\pi)}
    -
    \frac{\operatorname{adj}(D\pi)_{q j}\,\partial_p \operatorname{jac}(\pi)}{\operatorname{jac}(\pi)^2},
\]
it follows that
\begin{align*}
    (f_{ij})\circ\pi
    &=
    \frac{1}{\operatorname{jac}(\pi)^2}
    \sum_{p,q=1}^n
    \operatorname{adj}(D\pi)_{p i}\operatorname{adj}(D\pi)_{q j}F_{pq}\\
    &\quad+
    \frac{1}{\operatorname{jac}(\pi)^2}
    \sum_{p,q=1}^n
    \operatorname{adj}(D\pi)_{p i}(\partial_p \operatorname{adj}(D\pi)_{q j})F_q\\
    &\quad-
    \frac{1}{\operatorname{jac}(\pi)^3}
    \sum_{p,q=1}^n
    \operatorname{adj}(D\pi)_{p i}\operatorname{adj}(D\pi)_{q j}(\partial_p \operatorname{jac}(\pi))F_q.
\end{align*}
Multiplying by $\operatorname{jac}(\pi)^2$, we have
\[
    \operatorname{jac}(\pi)^2(f_{ij}\circ\pi)
    =
    \sum_{p,q=1}^n
    \left(
        T_{pq}^{(1)}
        +T_{pq}^{(2)}
        +T_{pq}^{(3)}
    \right),
\]
where
\[
\begin{aligned}
    T_{pq}^{(1)}
        &:=\operatorname{adj}(D\pi)_{p i}\operatorname{adj}(D\pi)_{q j}F_{pq},\\
    T_{pq}^{(2)}
        &:=\operatorname{adj}(D\pi)_{p i}(\partial_p \operatorname{adj}(D\pi)_{q j})F_q,\\
    T_{pq}^{(3)}
        &:=-\operatorname{adj}(D\pi)_{p i}\operatorname{adj}(D\pi)_{q j}
          \frac{\partial_p \operatorname{jac}(\pi)}{\operatorname{jac}(\pi)}F_q.
\end{aligned}
\]
The first two terms are holomorphic outside of exceptional divisor.  The third term is initially defined only on the
biholomorphic locus, but its apparent poles along the exceptional
divisor are removable.  Indeed, since
$
    \operatorname{jac}(\pi)=j_0(y)\prod_{\ell=1}^k y_\ell^{\kappa_\ell},
$
we have
$
    \frac{\partial_p \operatorname{jac}(\pi)}{\operatorname{jac}(\pi)}
    =
    \frac{\partial_p j_0}{j_0}
    +
    \frac{\kappa_p}{y_p}.
$
If $\kappa_p=0$, this expression is holomorphic.  If
$\kappa_p>0$, then $E_p=\{y_p=0\}$ is exceptional, and the rank
argument from Proposition~\ref{prop:exceptional-strictness} gives
$
    y_p\mid \operatorname{adj}(D\pi)_{p i}.
$
Consequently,
\[
    \operatorname{adj}(D\pi)_{p i}
    \frac{\partial_p\operatorname{jac}(\pi)}{\operatorname{jac}(\pi)}
    =
    \frac{\operatorname{adj}(D\pi)_{p i}}{y_p}
    \left(
        y_p\frac{\partial_p j_0}{j_0}
        +\kappa_p
    \right),
\]
which is holomorphic because
$\operatorname{adj}(D\pi)_{p i}/y_p$ is holomorphic.  Thus every $T_{pq}^{(\alpha)}$ extends
holomorphically to the whole chart.  We continue to denote these
extensions by $T_{pq}^{(\alpha)}$.

The following lemma gives the required termwise
control.
\begin{lemma}\label{lem:hessian-exponents}
Fix a relatively compact chart as above.
There exists a constant $C>0$, for every $\alpha\in\{1,2,3\}$ and every nonzero term
$T_{pq}^{(\alpha)}$, there exist integers
$b_{pq,\ell}^{(\alpha)}>-2$, $1\le\ell\le k$, such that
\[
\bigl|T_{pq}^{(\alpha)}(y)\bigr|
\le
C\prod_{\ell=1}^k
|y_\ell|^{b_{pq,\ell}^{(\alpha)}}.
\]
Moreover, $b_{pq,\ell}^{(\alpha)}+2\ge a_\ell$
for every $\ell$, and equality can occur for at most one index $\ell$
for each fixed term $T_{pq}^{(\alpha)}$. The constant $C$ is independent of $y$ and can be chosen uniformly for all $1\le i,j,p,q\le n$ and $\alpha\in\{1,2,3\}$.
\end{lemma}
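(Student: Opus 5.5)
The plan is to read off each exponent $b_{pq,\ell}^{(\alpha)}$ as a lower bound for the vanishing order of the holomorphic function $T_{pq}^{(\alpha)}$ along $E_\ell=\{y_\ell=0\}$. The basic fact is elementary. If a holomorphic $g$ on the chart satisfies $\operatorname{ord}_{E_\ell}(g)\ge c_\ell\ge0$ for each $\ell$, then, since the $y_\ell$ are coordinates, $g=\prod_\ell y_\ell^{c_\ell}h$ with $h$ holomorphic. After shrinking to a relatively compact subchart, $|h|$ is bounded.

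There are finitely many terms, indexed by $i,j,p,q,\alpha$, so a single constant $C$ works for all of them. I will set $b_{pq,\ell}^{(\alpha)}=\max(c_\ell,0)$, where $c_\ell$ is the order bound computed below. These are nonnegative integers, hence $>-2$. Raising a negative bound to $0$ only enlarges $b+2$, and it cannot create an equality, because $a_\ell\ge1$ and $0+2>a_\ell$ would be needed to fail. It therefore suffices to show $c_\ell+2\ge a_\ell$, with equality for at most one $\ell$.

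The order bookkeeping uses three inputs.
\begin{enumerate}[label=\textup{(\alph*)}]
\item Entries of $\operatorname{adj}(D\pi)$ are holomorphic, so they have order $\ge0$. If $E_\ell$ is exceptional ($\kappa_\ell>0$), the rank argument of Proposition~\ref{prop:exceptional-strictness} gives $y_\ell\mid\operatorname{adj}(D\pi)_{\ell i}$, i.e.\ order $\ge1$ when the row index is $\ell$.
\item Since $F=\prod y_\ell^{a_\ell}$, we have $\operatorname{ord}_{E_\ell}F_q\ge a_\ell-\delta_{q\ell}$ and $\operatorname{ord}_{E_\ell}F_{pq}\ge a_\ell-\delta_{p\ell}-\delta_{q\ell}$.
\item $\partial_p$ lowers order along $E_\ell$ by at most $\delta_{p\ell}$. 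The removable-singularity computation preceding the lemma shows that $\operatorname{adj}(D\pi)_{pi}\,\partial_p\operatorname{jac}(\pi)/\operatorname{jac}(\pi)$ is holomorphic, so it has order $\ge0$.
\end{enumerate}

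With these inputs I bound each term as follows.

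For $T^{(1)}$, the order is at least $a_\ell-\delta_{p\ell}-\delta_{q\ell}$, plus $1$ if $p=\ell$ and $E_\ell$ is exceptional.

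For $T^{(2)}$, the factor $F_q$ loses $\delta_{q\ell}$. The factor $\partial_p\operatorname{adj}(D\pi)_{qj}$ can lose $\delta_{p\ell}$ only from an order that is itself $\ge\delta_{q\ell}$ when $E_\ell$ is exceptional. So the worst case again has total order $\ge a_\ell-2$, and this worst case requires $p=q=\ell$. In that case, if $E_\ell$ is exceptional, the factor $\operatorname{adj}(D\pi)_{\ell i}$ contributes an extra $+1$.

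For $T^{(3)}$, the holomorphic factor from (c) has order $\ge0$, and $\operatorname{adj}(D\pi)_{qj}F_q$ has order $\ge a_\ell-\delta_{q\ell}$. So the order is $\ge a_\ell-1$, and equality never occurs.

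In all three cases $c_\ell+2\ge a_\ell$, and equality forces $p=q=\ell$ with $E_\ell$ non-exceptional. For a fixed term, $p$ is fixed, so at most one $\ell$ can satisfy $p=\ell$. This gives the "at most one index" claim.

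The step I expect to need the most care is $T^{(2)}$. There one must check that the derivative $\partial_\ell$ falling on $\operatorname{adj}(D\pi)_{qj}$ does not combine with the loss from $F_q$ to give order $a_\ell-2$ along an exceptional $E_\ell$. My plan is to handle this exactly as above: the only way to reach $a_\ell-2$ is $p=q=\ell$, and then the prefactor $\operatorname{adj}(D\pi)_{\ell i}$ is divisible by $y_\ell$. This is the same divisibility that made $T^{(3)}$ holomorphic in the first place.
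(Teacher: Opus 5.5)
Your proposal is correct and follows essentially the paper's route: the exponents come from the monomial form of $F_q$ and $F_{pq}$, and all remaining factors, including $\operatorname{adj}(D\pi)_{pi}\,\partial_p\operatorname{jac}(\pi)/\operatorname{jac}(\pi)$, are simply holomorphic and bounded. With this bookkeeping, equality $b+2=a_\ell$ can only occur for $T^{(1)}$ with $p=q=\ell$; the paper takes $b^{(1)}_{pq,\ell}=a_\ell-\delta_{p\ell}-\delta_{q\ell}$ and $b^{(2)}_{pq,\ell}=b^{(3)}_{pq,\ell}=a_\ell-\delta_{q\ell}$. Your extra $+1$ from exceptional divisibility and the truncation at $0$ are harmless but unnecessary, and your worry about $T^{(2)}$ is moot: $\partial_p\operatorname{adj}(D\pi)_{qj}$ is holomorphic, so $T^{(2)}$ has order at least $a_\ell-\delta_{q\ell}$ directly and equality never occurs there.
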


\begin{proof}
Since $F=\prod_{\ell=1}^k y_\ell^{a_\ell},$ for $1\leq q \leq k$ we have
\[
|F_q|
\le C\prod_{\ell=1}^k
|y_\ell|^{a_\ell-\delta_{q\ell}},
\qquad
|F_{pq}|
\le C\prod_{\ell=1}^k
|y_\ell|^{a_\ell-\delta_{p\ell}-\delta_{q\ell}}.
\]
All the remaining factors in $T_{pq}^{(1)}$ and
$T_{pq}^{(2)}$ are holomorphic. The computation preceding this
lemma shows that $ \operatorname{adj}(D\pi)_{p i}\frac{\partial_p \operatorname{jac}(\pi)}{\operatorname{jac}(\pi)} $
is also holomorphic, so the remaining factors in
$T_{pq}^{(3)}$ are holomorphic as well. On the fixed relatively compact chart, these holomorphic
factors are bounded. Since there are only finitely many
choices of $i,j,p,q$ and $\alpha$, their bounds can be
absorbed into a single constant $C$, depending only on
$f$, $\pi$, and the chosen coordinate chart.

For $\alpha=1$, set
\[
b_{pq,\ell}^{(1)}
=
a_\ell-\delta_{p\ell}-\delta_{q\ell},
\]
while for $\alpha=2,3$, set
\[
b_{pq,\ell}^{(\alpha)}
=
a_\ell-\delta_{q\ell}.
\]
The required pointwise bounds are deduced from these choices. Since $a_\ell\ge1$,
all the exponents are greater than $-2$. Moreover,
\[
b_{pq,\ell}^{(\alpha)}+2\ge a_\ell.
\]
For $\alpha=2,3$ the inequality is always strict. For $\alpha=1$, equality
requires $p=q=\ell$, which can occur for at most one index $\ell$.
\end{proof}

Lemma 3.5 provides the precise termwise divisorial control needed to estimate the second derivatives on a resolution chart. Combining this local estimate with the monomial delta integral of Lemma 3.4, we now obtain the uniform second-order level-set bound.

\begin{proposition}\label{prop:second-energy}
For every $K\Subset U$, there exist $C,t_0>0$ independent of $t$, such that, for every
$0<t<t_0$,
\begin{equation}\label{eq:second-energy}
\int_{K\cap\{|f|=t\}}\frac{|D^2f|}{|\partial f|}\,\mathrm{d}S\le C.
\end{equation}
\end{proposition}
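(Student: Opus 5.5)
The plan is to write the level-set integral in the delta notation \eqref{eq:delta-coarea}, pull it back to the log-resolution $\pi:Y\to U$, and reduce it on each resolution chart to the monomial integral of Lemma~\ref{lem:monomial-delta}, using the exponents from Lemma~\ref{lem:hessian-exponents}. By \eqref{eq:delta-coarea},
\[
\int_{K\cap\{|f|=t\}}\frac{|D^2f|}{|\partial f|}\,\mathrm dS=\int_K |D^2 f|\,\delta(|f|-t)\,\mathrm dV .
\]
This holds first in the distributional sense in $t$. Since the level sets are smooth real-analytic hypersurfaces near $K$ for $0<t\le r_0$, the left side depends continuously on $t$, so a distributional bound $\le C$ on $(0,t_0)$ gives a pointwise bound.

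\textbf{Step 1: pull back to the resolution.} Since $\pi$ is biholomorphic off $Z(f)$ and $\{f=0\}$ is null, we have
\[
\int_K |D^2f|\,\delta(|f|-t)\,\mathrm dV=\int_{\pi^{-1}(K)}|D^2f\circ\pi|\,|\operatorname{jac}(\pi)|^2\,\delta(|F|-t)\,\mathrm dV_y ,
\]
where $F=f\circ\pi$ and $|\operatorname{jac}(\pi)|^2$ is the real Jacobian. The set $\pi^{-1}(K)$ is compact because $\pi$ is proper. Cover it by finitely many relatively compact charts of the form \eqref{eq:snc-chart}, chosen small enough to lie in polydiscs of radius $R<1$ in the $y_1,\dots,y_k$ variables, and insert a subordinate partition of unity. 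Charts where $F$ is a unit ($k=0$) do not meet $\{|F|=t\}$ for small $t$ and can be discarded. The units in $F$ can be absorbed into the coordinates, as in \eqref{eq:snc-chart}. On a remaining chart, $|D^2f\circ\pi|\le C\sum_{i,j}|f_{ij}\circ\pi|$, and hence
\[
|D^2 f\circ\pi|\,|\operatorname{jac}(\pi)|^2\le C\sum_{i,j,p,q,\alpha}|T^{(\alpha)}_{pq}|\le C\sum_{\text{terms}}\prod_{\ell=1}^k|y_\ell|^{b_\ell},
\]
by the decomposition preceding Lemma~\ref{lem:hessian-exponents} and that lemma itself.

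\textbf{Step 2: reduce to Lemma~\ref{lem:monomial-delta}.} Fix one term. Use polar coordinates $y_\ell=r_\ell e^{i\theta_\ell}$ for $\ell\le k$, so that $\mathrm dV=\prod_\ell r_\ell\,\mathrm dr_\ell\,\mathrm d\theta_\ell\cdot \mathrm dV(y_{k+1},\dots,y_n)$. On the chart $|F|=\prod r_\ell^{a_\ell}$, so the delta factor depends only on the radii. Integrating out the angles and the bounded variables $y_{k+1},\dots,y_n$ gives a fixed constant, and the term is bounded by
\[
C\int_{(0,R)^k}\prod_\ell r_\ell^{b_\ell+1}\,\delta\Bigl(\prod_\ell r_\ell^{a_\ell}-t\Bigr)\,\mathrm dr=C\,I(t).
\]
Lemma~\ref{lem:hessian-exponents} gives $b_\ell>-2$, $\lambda_\ell=(b_\ell+2)/a_\ell\ge1$, and equality for at most one $\ell$. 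Lemma~\ref{lem:monomial-delta} then yields $I(t)\le C$ for $0<t<t_0$. Summing over the finitely many terms and charts completes the argument.

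\textbf{Main obstacle.} The delicate point is justifying the manipulations of $\delta$ under the change of variables $\pi$ and under the cutoff from the partition of unity. I would handle this by testing against $\eta\in C^\infty_0((0,t_0))$, exactly as in the justification of \eqref{eq:delta-coarea}. Then every step becomes an identity between ordinary integrals of the form $\int\Psi\,\eta(|f|)\,\mathrm dV$, where the change of variables and Fubini are standard. The polar reduction is likewise an identity after pairing with $\eta$. One checks at the end that the resulting bound $\int\eta(t)\,(\cdots)\,\mathrm dt\le C\|\eta\|_{L^1}$ is uniform in $\eta\ge0$, which by continuity gives \eqref{eq:second-energy}.
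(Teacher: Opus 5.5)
Your proposal is essentially the paper's proof. You pull back through the log-resolution, bound $|\operatorname{jac}(\pi)^2(f_{ij}\circ\pi)|$ by the terms $T^{(\alpha)}_{pq}$ with the exponents of Lemma~\ref{lem:hessian-exponents}, and integrate out the angles and $y_{k+1},\dots,y_n$ to reach the monomial integral of Lemma~\ref{lem:monomial-delta}. You then sum over finitely many charts after discarding those away from $\{F=0\}$; the paper uses a finite cover of $\pi^{-1}(K)\cap\{F=0\}$ plus positivity instead of a partition of unity, which amounts to the same thing.

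One remark should be dropped. The map $t\mapsto\int_{K\cap\{|f|=t\}}|D^2f|/|\partial f|\,\mathrm{d}S$ need not be continuous for an arbitrary compact $K$: for $K=\overline B\cap\{|f|\le t_1\}$ it vanishes for $t>t_1$ but generally not for $t<t_1$. You do not need continuity, however. The transport of the measure $\mathrm{d}S/|\nabla|f||$ under the biholomorphism $\pi$ off $Z(f)$, the polar reduction, and the computation in Lemma~\ref{lem:monomial-delta} all hold pointwise for each small $t>0$, and this is how the paper reads the $\delta$-notation.
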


\begin{proof}
Since $|D^2f|\le C\sum_{i,j=1}^n|f_{ij}|$, it suffices to estimate
\[
\int_{K\cap\{|f|=t\}}
\frac{|f_{ij}|}{|\partial f|}\,\mathrm{d}S
=\int_K|f_{ij}|\,\delta(|f|-t)\,\mathrm{d}V
\]
for each fixed pair $i,j$.

We first work on a single relatively compact polydisc
$P$ on the resolution, normalized as in \eqref{eq:snc-chart}, with $F=f\circ\pi=\prod_{\ell=1}^k y_\ell^{a_\ell}$.
All the exponents and constants in the following local computation
refer to this fixed chart. The expansion preceding
Lemma~\ref{lem:hessian-exponents} shows
\[
|f_{ij}\circ\pi|\,|\operatorname{jac}(\pi)|^2
=\left|\operatorname{jac}(\pi)^2(f_{ij}\circ\pi)\right|
\le\sum_{\alpha=1}^3\sum_{p,q=1}^n|T_{pq}^{(\alpha)}|.
\]
For each nonzero term, Lemma~\ref{lem:hessian-exponents} yields
\[
|T_{pq}^{(\alpha)}(y)|
\le C\prod_{\ell=1}^k
|y_\ell|^{b_{pq,\ell}^{(\alpha)}}.
\]
Writing $y_\ell=\rho_\ell e^{i\theta_\ell}$ and integrating in
the angular variables and in $y_{k+1},\ldots,y_n$, we obtain
\begin{align*}
&\int_P|T_{pq}^{(\alpha)}|
\delta(|F|-t)\,\mathrm{d}V_y\\
&\qquad\le C\int_{(0,R)^k}
\prod_{\ell=1}^k\rho_\ell^{b_{pq,\ell}^{(\alpha)}+1}
\delta\!\left(\prod_{\ell=1}^k\rho_\ell^{a_\ell}-t\right)
\,\mathrm{d}\rho_1\cdots\mathrm{d}\rho_k,
\end{align*}
where $0<R<1$ is fixed by the chosen polydisc.
For this term, the same lemma gives
\[
\frac{b_{pq,\ell}^{(\alpha)}+2}{a_\ell}\ge1,
\qquad
\#\left\{\ell:
\frac{b_{pq,\ell}^{(\alpha)}+2}{a_\ell}=1\right\}\le1.
\]
Lemma~\ref{lem:monomial-delta} therefore bounds this integral
uniformly for all sufficiently small $t>0$. Summing over the
finitely many nonzero terms, we conclude that
\[
\int_P|f_{ij}\circ\pi|\,|\operatorname{jac}(\pi)|^2
\delta(|F|-t)\,\mathrm{d}V_y\le C_P,
\]
where $C_P$ depends on $f$, $\pi$, and the chosen coordinate
polydisc, but is independent of $t$. Since there are finitely many
pairs $i,j$, the bound and the range of $t$ may be chosen uniformly
in these indices.

We now pass to $K$. By properness of $\pi$, the set
$\pi^{-1}(K)\cap\{f\circ\pi=0\}$ is compact and is covered by
finitely many polydiscs.
The complement of their union in $\pi^{-1}(K)$ is compact and
disjoint from $\{f\circ\pi=0\}$. Hence, for all sufficiently
small $t>0$, the set
\[
\pi^{-1}(K)\cap\{|f\circ\pi|=t\}
\]
is contained in that union. Since the integrands are nonnegative,
by the change-of-variables formula and the sum of the finitely many
local bounds, we have
\[
\int_K|f_{ij}|\,\delta(|f|-t)\,\mathrm{d}V
\le C,
\qquad 0<t<t_0,
\]
for constants $C,t_0>0$ independent of $t$ and of $i,j$.
Summing over $i,j$ proves \eqref{eq:second-energy}.
\end{proof}

\begin{remark}
Proposition  \ref{prop:second-energy} will be used specifically in the proof of the endpoint $W^{2,1}$-criterion in Theorem \ref{thm:w2-intro} (ii). Indeed, the Hessian estimate \eqref{eq:hessian-size-estimate} contains the term$|\chi'(\log |f|)|\frac{|D^2f|}{|f|},$which is not controlled by the first-order level-set estimates alone. Proposition \ref{prop:second-energy}, together with the coarea formula, reduces its $L^1$-integrability to the condition $\int_{-\infty}^{A}\chi'(s) \mathrm{d}s<\infty.$Thus the second-order level-set estimate is precisely what allows us to treat the full Hessian directly at the endpoint $p=1$.
\end{remark}

\subsection{First-order level-set energy estimates}

We next establish first-order energy estimates on small level and
sublevel sets of $|f|$. These estimates provide the bounds needed in
the coarea arguments and control the integrals over the moving boundary
$\{|f|=\varepsilon\}$ when integration by parts on
$\{|f|>\varepsilon\}$ is followed by the limit $\varepsilon\to0$.
They are derived from the continuity of the currents of integration
over the fibers $\{f=t\}$.

\begin{proposition}\label{prop:first-energy}
Let $B\Subset U$ be a concentric Euclidean ball chosen as above. Then
\[
\begin{aligned}
 \int_{B\cap\{|f|=r\}}|\partial f|\,dS
   =2\pi\mu_0(B)\,r+o(r) ,
 \quad \quad
 \int_{B\cap\{|f|<r\}}|\partial f|^2\,dV
   =\pi\mu_0(B)\,r^2+o(r^2),
\end{aligned}
\]
as $r\rightarrow 0$. 

Consequently, for every compact subset $K\Subset U$, there exist
constants $C>0$ and $r_0>0$ such that, for $0<r<r_0$,
\[
\begin{aligned}
 \int_{K\cap\{|f|=r\}}|\partial f|\,dS &\le Cr ,
 \quad \quad
 \int_{K\cap\{|f|<r\}}|\partial f|^2\,dV &\le Cr^2.
\end{aligned}
\]
\end{proposition}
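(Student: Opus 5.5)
The key observation is that $|\partial f|$ on the level set, weighted by the coarea density, is exactly the fiber measure: the coarea formula \eqref{eq:coarea} gives
\[
\int_{B\cap\{|f|=r\}}|\partial f|\,\mathrm{d}S
=\frac{\mathrm{d}}{\mathrm{d}r}\int_{B\cap\{|f|<r\}}|\partial f|^2\,\mathrm{d}V ,
\]
so both asymptotics follow from one another, and we plan to prove the second one. Here $\mu_0$ should be the mass of the divisor current $[\operatorname{Div}(f)]$, i.e. $\mu_0=\mu_f$ normalized by $\Delta\log|f|=2\pi\mu_f$, with $B$ chosen so that $\mu_f(\partial B)=0$. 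We will use the complex-valued map $f:B\to\mathbb C$. For $w\in\mathbb C$ let $[f=w]$ be the current of integration over the fiber with multiplicities, and put $\sigma_w(B)$ for its $(2n-2)$-dimensional mass in $B$. The complex coarea formula (the Jacobian of $f$ as a map to $\mathbb R^2$ is $|\partial f|^2$) gives
\[
\int_{B\cap\{|f|<r\}}|\partial f|^2\,\mathrm{d}V=\int_{|w|<r}\sigma_w(B)\,\mathrm{d}A(w).
\]
Actually the Jacobian should be computed in the normalization $|\nabla f|^2=2|\partial f|^2$; we will keep track of this constant, which only rescales $\mu_0$.

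The main step is continuity of $w\mapsto\sigma_w(B)$ at $w=0$, with limit $\sigma_0(B)=\mu_0(B)$. By the Poincar\'e--Lelong formula, $[f=w]=dd^c\log|f-w|$, and $\log|f-w|\to\log|f|$ in $L^1_{\mathrm{loc}}$ as $w\to0$ (these are uniformly bounded above, and there is convergence off $Z(f)$ together with local uniform integrability). Hence $[f=w]\to[f=0]$ weakly as currents. Since $\operatorname{Div}(f)$ counts multiplicities, this matches $\mu_f$. Masses on $B$ are $\int_B[f=w]\wedge\omega^{n-1}/(n-1)!$, and weak convergence of positive measures $[f=w]\wedge\omega^{n-1}$ gives convergence of $B$-masses because the limit puts no mass on $\partial B$ (this is where the choice of concentric ball is used: all but countably many radii work). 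We then get $\sigma_w(B)=\mu_0(B)+o(1)$, and integrating over the disc $|w|<r$ gives $\pi\mu_0(B)r^2+o(r^2)$.

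For the level-set statement we integrate the same identity over the circle $|w|=r$ rather than the disc. The coarea formula along $|f|=r$ with $|\nabla|f||=|\partial f|$ expresses $\int_{B\cap\{|f|=r\}}|\partial f|\,\mathrm{d}S$ as $\int_{|w|=r}\sigma_w(B)\,|\mathrm{d}w|$, which is $2\pi r\,\mu_0(B)+o(r)$ by uniform continuity at $0$. The bounds for general compact $K$ then follow by covering $K$ with finitely many such balls, since the integrands are nonnegative. We expect the main obstacle to be the weak convergence of the fiber currents, with the mass convergence up to $\partial B$ being the delicate point, together with checking the normalization constants relating $\mathrm{d}S$, $|\partial f|$ and the fiber area.
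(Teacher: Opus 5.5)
Your proposal is correct and follows essentially the paper's route: continuity of the fiber masses $w\mapsto\sigma_w(B)$ at $w=0$ via Poincar\'e--Lelong and $L^1_{\mathrm{loc}}$ convergence of $\log|f-w|$, convergence of the $B$-masses because the limit charges no mass on $\partial B$, and the coarea formula from the image side over the circle $|w|=r$ (the paper obtains the sublevel asymptotic by integrating the level-set one in $r$, which is equivalent to your disc integral). A few minor fixes: the Jacobian of $f$ as a map to $\mathbb R^2$ is exactly $|\partial f|^2$ (since $\nabla\operatorname{Re}f\perp\nabla\operatorname{Im}f$ with $|\nabla\operatorname{Re}f|=|\partial f|$), so nothing rescales $\mu_0$; the sublevel asymptotic does not by itself imply the level-set one, which is harmless because you prove the circle version directly; and identifying $\sigma_w(B)$ with $\operatorname{Vol}_{2n-2}(\{f=w\}\cap B)$ for every small $w\neq0$ on the circle uses the regularity of small nonzero fibers, which you rely on implicitly and the paper takes from Lemma~2.3 of PSWW.
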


\begin{proof}
After shrinking $U$, we may assume that $U$ is a Euclidean ball
centered at the origin and that $f$ is holomorphic on a neighborhood of
$\overline U$.  Fix $K\Subset U$, and choose a concentric Euclidean ball $B$
such that $K\Subset B\Subset U$.
For $t\in\mathbb C$, set
\[
F_t:=\{z\in U:f(z)=t\},
\qquad
\mu_t:=[F_t]\wedge
\frac{\omega_{\mathrm{Euc}}^{n-1}}{(n-1)!},
\]
where $\omega_{\mathrm{Euc}}$ is the standard Euclidean K\"ahler form.
Thus $\mu_t$ is the multiplicity-weighted Euclidean volume measure on
the fiber $F_t$.

We first observe that
$
t\longmapsto \mu_t(B)
$
is continuous for $t$ sufficiently close to $0$. Indeed, if
$t_j\to t$, then
\[
\log|f-t_j|\longrightarrow \log|f-t|
\qquad\text{in }L^1_{\mathrm{loc}}(U).
\]
By the Poincar\'e--Lelong formula,
$
[F_{t_j}]\longrightarrow [F_t]
$
in the sense of currents, and hence
$
\mu_{t_j}\rightharpoonup\mu_t
$
weakly as locally finite measures.

For a Euclidean ball $B$, one has $\mu_t(\partial B)=0$.  Indeed, write
$[F_t]=\sum_jm_{j,t}[Z_{j,t}]$ in terms of reduced irreducible components and
multiplicities.  If $n\ge2$, let $\rho(z)=|z|^2-R^2$, so that
$\partial B=\{\rho=0\}$. The restriction of $\rho$ to the regular
part of each positive-dimensional component $Z_{j,t}$ is strictly
plurisubharmonic and therefore cannot vanish identically on any
nonempty open subset. Since this restriction is real analytic, its
zero set has zero $(2n-2)$-dimensional measure. The singular part of
$Z_{j,t}$ also has zero $(2n-2)$-dimensional measure, and hence $\mu_t(\partial B)=0.$  If $n=1$, choose the radius so that $F_0\cap\partial B=\varnothing$; the same remains true for small $t$.  Thus $\mu_t(\partial B)=0$ in either case, including for nonreduced fibers.
Therefore, by the Portmanteau theorem,
\[
\mu_t(B)
\le
\liminf_{j\to\infty}\mu_{t_j}(B)
\le
\limsup_{j\to\infty}\mu_{t_j}(B)
\le
\mu_t(\overline B)
=
\mu_t(B).
\]
Consequently,
$
\mu_{t_j}(B)\longrightarrow\mu_t(B).
$

We now establish uniform two-sided bounds for the volumes of
$F_t\cap B$ for all sufficiently small nonzero values of $t$. Since $f(0)=0$ and $f$ is not identically zero, the divisor
$\operatorname{Div}(f)$ is nonempty in $B$. Writing
\[
\operatorname{Div}(f)
=
\sum_{j=1}^{N} m_j Z_j
\]
locally in $B$, where the $Z_j$ are the irreducible components and
$m_j\geq 1$, we have
\[
\mu_0(B)
=
\sum_{j=1}^{N}
m_j
\int_{Z_j\cap B}
\frac{\omega_{\mathrm{Euc}}^{n-1}}{(n-1)!}.
\]
At least one of the components $Z_j$ passes through the origin.
Since every nonempty analytic hypersurface has positive
$(2n-2)$-dimensional volume on a sufficiently small neighborhood of
each of its points, while analytic hypersurfaces have locally finite
volume, it follows that $0<\mu_0(B)<\infty$.

By the continuity of the fiber mass at $t=0$ proved above,
\[
\mu_t(B)\longrightarrow \mu_0(B)=\mu_0(B)
\qquad\text{as }t\to0.
\]
Hence, taking for instance $\eta=\mu_0(B)/2$ in the definition of
continuity, there exists $r_1>0$ such that
\[
|\mu_t(B)-\mu_0(B)|<\frac{\mu_0(B)}{2}
\qquad\text{whenever }|t|<r_1.
\]
Consequently,
\begin{equation}
\frac{\mu_0(B)}{2}
\leq
\mu_t(B)
\leq
\frac{3\mu_0(B)}{2},
\qquad |t|<r_1.
\label{eq:fiber-mass-two-sided}
\end{equation}

On the other hand, after decreasing $r_1$ if necessary, the regularity
of sufficiently small nonzero level sets
\cite[Lemma~2.3]{PSWW} ensures that
\[
\partial f(z)\neq0
\qquad
\text{whenever }z\in\overline B
\quad\text{and}\quad
0<|f(z)|<r_1.
\]
Thus every $t$ with $0<|t|<r_1$ is a regular value of $f$ on
$\overline B$, and
\[
F_t\cap B=\{f=t\}\cap B
\]
is a smooth complex hypersurface. In particular, the divisor of
$f-t$ is reduced along $F_t\cap B$, so the current of integration
$[F_t]$ carries multiplicity one there. Therefore
\[
\mu_t(B)
=
\int_{F_t\cap B}
\frac{\omega_{\mathrm{Euc}}^{n-1}}{(n-1)!}
=
\operatorname{Vol}_{2n-2}(F_t\cap B).
\]
Combining this identity with \eqref{eq:fiber-mass-two-sided}, we obtain
\begin{equation}
0<
\frac{\mu_0(B)}{2}
\leq
\operatorname{Vol}_{2n-2}(F_t\cap B)
\leq
\frac{3\mu_0(B)}{2}
<\infty,
\qquad
0<|t|<r_1.
\label{eq:fiber-volume-two-sided}
\end{equation}
Thus the volumes of all sufficiently small nonzero fibers are uniformly bounded away from zero.

We now apply the coarea formula from the image side. Define
\[
I_B(r)
:=
\int_{B\cap\{|f|=r\}}
|\partial f|\,dS,
\qquad
0<r<r_0.
\]
Since $B\cap\{|f|=r\}$ is a smooth real hypersurface and
\[
f:
B\cap\{|f|=r\}
\longrightarrow
\{w\in\mathbb C:|w|=r\}
\]
is a submersion, the coarea formula gives
\[
I_B(r)
=
\int_{|w|=r}
\operatorname{Vol}_{2n-2}(F_w\cap B)
\,dS.
\]
Equivalently,
\[
I_B(r)
=
\int_{|w|=r}\mu_w(B)\,dS.
\]
Writing $w=re^{i\theta}$, so that
$dS(w)=r\,d\theta$, we obtain
\[
I_B(r)
=
r\int_0^{2\pi}
\mu_{re^{i\theta}}(B)\,d\theta.
\]
Since $t\mapsto\mu_t(B)$ is continuous at the origin,
\[
\sup_{\theta\in[0,2\pi]}
\left|
\mu_{re^{i\theta}}(B)-\mu_0(B)
\right|
\longrightarrow0
\qquad
\text{as }r\to0^+.
\]
It follows that
\[
\frac{I_B(r)}{r}
\longrightarrow
2\pi\mu_0(B),
\]
or equivalently,
\[
I_B(r)
=
2\pi\mu_0(B)\,r+o(r)
\approx r.
\]

For completeness, let
\[
J_B(r)
:=
\int_{B\cap\{|f|<r\}}
|\partial f|^2\,dV.
\]
Applying the scalar coarea formula \eqref{eq:coarea}, together with
$|\nabla|f||=|\partial f|$, yields
\[
J_B(r)
=
\int_0^r I_B(s)\,ds.
\]
Consequently,
\[
J_B(r)
=
\pi\mu_0(B)\,r^2+o(r^2)
\approx r^2.
\]

Now let $K\Subset U$. Choosing the ball $B$ above so that
$K\Subset B\Subset U$, the positivity of the integrands gives
\[
\begin{aligned}
 \int_{K\cap\{|f|=r\}}|\partial f|\,dS
 \le I_B(r), 
 \quad \quad
 \int_{K\cap\{|f|<r\}}|\partial f|^2\,dV
 \le J_B(r).
\end{aligned}
\]
Thus the asymptotics on $B$ yield uniform upper bounds on every
compact subset $K\Subset U$. This completes the proof.
\end{proof}

For comparison, \cite{PSWW} establishes the corresponding upper bounds of orders $O(r)$ and $O(r^2)$. Proposition 3.8 shows in addition that, for the particular nondegenerate ball $B$ chosen above, these bounds have nonzero leading terms and therefore give the exact orders of the two energy quantities. This stronger conclusion is specific to $B$; for an arbitrary compact subset $K\Subset U$, the corresponding lower bounds may fail, and only the upper bounds are asserted.

In addition to the energy estimates above, we need the following
unweighted bounds for the Hausdorff measure of small level sets and the volume of the corresponding sublevel sets.

\begin{proposition}[\cite{PSWW}~Theorem 1.2]\label{area estimate}
Under the same hypotheses as in Proposition~\ref{prop:first-energy}, there
exist constants $r_0>0$, $\gamma\in(0,1]$, and $\tau\in(0,2]$ such that
\[
\mathcal{H}^{2n-1}\bigl(\{z\in U:|f|=r\}\bigr)=O(r^\gamma),
\qquad
\operatorname{Vol}\bigl(\{z\in U:|f|<r\}\bigr)=O(r^\tau)
\]
for $0<r<r_0$. Here $\mathcal{H}^{2n-1}$ means $2n-1$ dimensional Hausdorff measure. 
\end{proposition}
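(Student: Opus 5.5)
The plan is to obtain both bounds from a Łojasiewicz-type inequality for $|f|$ near the origin, combined with the coarea formula \eqref{eq:coarea}. The level-set estimate will then follow from the sublevel-set estimate via a uniform bound on the mean curvature or topology of the level sets.

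\textbf{Step 1: volume of sublevel sets.} I would show that there are $C,\tau>0$ with $\operatorname{Vol}(\{|f|<r\})\le Cr^\tau$. Two routes are available.

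The direct route is the Łojasiewicz-type growth bound. After a local change of coordinates (Weierstrass preparation), $f$ is, up to a unit, a Weierstrass polynomial of degree $d$ in $z_n$. Then for fixed $z'$,
\[
|f(z',z_n)|\ge c\prod_{k=1}^d|z_n-\zeta_k(z')|,
\]
so the one-dimensional set $\{z_n:|f|<r\}$ lies in a union of $d$ discs of radius $(r/c)^{1/d}$. Fubini then gives $\operatorname{Vol}\le Cr^{2/d}$, i.e. $\tau=2/d\le 2$.

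Alternatively, the log canonical threshold gives $\int|f|^{-2c}<\infty$ for some $c>0$, and Chebyshev's inequality yields $\tau=2c$. The Fubini argument is more elementary, so I would use it, noting that shrinking $U$ is harmless.

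\textbf{Step 2: area of level sets.} The hypersurface $\{|f|=r\}$ is handled by slicing. In the same Weierstrass coordinates, for each fixed $z'$ the curve $\{z_n:|f(z',z_n)|=r\}$ is a real-algebraic curve in $z_n$ of degree at most $2d$, namely $|P|^2=r^2|u|^{-2}$ with a unit factor. It lies inside the $d$ discs above.

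The length of such a lemniscate-type curve inside discs of radius $\rho=(r/c)^{1/d}$ should be $O(\rho)$. This follows from a Crofton/Cauchy formula: a generic line meets the curve in at most $O(d)$ points, provided the unit is handled, either by absorbing it into $P$ via a $d$-th root or by working with real-analytic curves of bounded complexity. Slicing by lines gives length $\le C\rho$.

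However, slicing in one direction does not control $\mathcal H^{2n-1}$ directly, since the surface may be tangent to the slices. So I would instead apply Crofton's formula in $\mathbb R^{2n}$. The measure $\mathcal H^{2n-1}(\{|f|=r\}\cap U)$ is an average, over the affine lines $L$, of $\#(L\cap\{|f|=r\})$. For a complex line $L$, the restriction $f|_L$ is a one-variable holomorphic function. For a real line, $|f|^2-r^2$ restricted to $L$ is real analytic, and I would bound its number of zeros uniformly in $r$ and $L$ using a Gabrielov/Khovanskii-type uniform finiteness for the analytic family (or a Bezout bound after polynomial approximation).

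The lines that meet $\{|f|<r\}$ form a set of measure controlled by the Step 1 set. More precisely, the relevant lines pass within distance $\rho$ of $Z(f)$, whose measure in line space is $O(\rho)$ because $Z(f)$ has real codimension two. This gives $\mathcal H^{2n-1}=O(r^{1/d})$, with $\gamma=1/d\le1$.

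\textbf{Main obstacle.} The hard step is the uniform bound on intersection counts of lines with $\{|f|=r\}$ for arbitrary real lines. I would first try uniform finiteness of zeros for the analytic family $(L,r)\mapsto |f|_L|^2-r^2$ over a compact parameter space, which follows from o-minimality of globally subanalytic sets. The bound on the line measure near $Z(f)$ is standard.
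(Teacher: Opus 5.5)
The paper gives no proof of its own here; it imports the statement from \cite{PSWW}. So there is no internal argument to match, and the question is only whether your proposal stands on its own. It does: it is a correct independent proof, modulo three points you should tighten.

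\textbf{Points to tighten.}
First, uniform finiteness in $\mathbb R_{\mathrm{an}}$ gives the count bound you need. Apply it to the definable family $(L,r)\mapsto L\cap\{|f|=r\}\cap\overline U$, which is definable because $f$ is analytic near $\overline U$. It bounds the number of connected components of each fibre by a constant $M$, uniformly in $L$ and $r$. Lines containing a segment of the level set form a null set, because a compact piece of the smooth hypersurface $\{|f|=r\}$ has finite area. The area formula averaged over directions (Crofton) then gives
$\mathcal H^{2n-1}(\{|f|=r\}\cap U)\le CM\,\mu\{L:\ L\cap\{|f|\le r\}\cap U\neq\varnothing\}$.

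Second, the $O(\rho)$ bound on this line measure needs more than ``real codimension two''. You need that $Z(f)$, in the chart, is covered by $O(\rho^{2-2n})$ balls of radius $\rho$. This follows from a packing argument using Lelong's lower density bound $\mathcal H^{2n-2}(Z(f)\cap B(x,\rho))\ge c\rho^{2n-2}$ and finite total volume. Combine it with the fact that lines meeting a ball of radius $2\rho$ have measure $c\rho^{2n-1}$.

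Third, drop the ``Bezout after polynomial approximation'' alternative. Truncating $f$ does not control zero counts uniformly as $r\to0$. The remark about complex lines plays no role.

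\textbf{A shorter route using the paper's own tools.} The {\L}ojasiewicz gradient inequality for the real-analytic function $|f|^2$ at points of $Z(f)\cap\overline U$ gives $|\nabla|f|^2|\ge c|f|^{2\theta_0}$ with $\theta_0<1$. Since $|\nabla|f|^2|=2|f|\,|\partial f|$, after a finite cover this becomes $|\partial f|\ge c|f|^{\theta}$ near $Z(f)$, with $\theta=2\theta_0-1\in[0,1)$. Proposition~\ref{prop:first-energy} then gives
$\mathcal H^{2n-1}(\{|f|=r\})\le Cr^{-\theta}\int_{\{|f|=r\}}|\partial f|\,dS\le Cr^{1-\theta}$.
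The coarea formula \eqref{eq:coarea} gives
$\operatorname{Vol}(\{|f|<r\})\le C\int_0^r s^{-2\theta}\, s\,ds\le Cr^{2-2\theta}$.
This yields exactly the stated ranges, $\gamma=1-\theta\in(0,1]$ and $\tau=2\gamma\in(0,2]$, with no integral geometry or o-minimality.

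\textbf{What each route buys.} Yours does not depend on the energy estimate. It also produces explicit exponents $\gamma=1/d$ and $\tau=2/d$, with $d$ the Weierstrass degree; these are sharp for $f=z_1^d$. The price is the uniform-finiteness input for real lines. For the uses in this paper, namely killing boundary terms like $(1+|\log\varepsilon|)\varepsilon^\gamma$, any positive exponents suffice, so either route is adequate.
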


\begin{remark}\label{rem:weighted-versus-unweighted}
Proposition~\ref{prop:first-energy} controls  level and sublevel set energies, with linear and quadratic main terms for the chosen ball
$B$.  On the other hand, Proposition~\ref{area estimate} controls the
Hausdorff area and volume, with decay exponents
$\gamma$ and $\tau$.  The level and sublevel set energy estimates are used to
identify weak derivatives. 
\end{remark}

\section{The Laplacian and its residual divisor measure}\label{sec:laplace}

We now prove Theorem~\ref{thm:laplace-intro}.  Necessity is proved by the
monumial case at a smooth point of a component of multiplicity $m_0$.
Sufficiency combines a pointwise gradient bound with the level-set energy estimate. We then pass from the classical
Laplacian on $U\setminus Z(f)$ to the full distributional Laplacian
on $U$.

\begin{lemma}\label{lem:gradient-f}
After shrinking $U$, there is a constant $C$ such that
\begin{equation}\label{eq:gradient-f}
|\partial f(z)|\le C|f(z)|^{1-1/m_0},
\qquad z\in U.
\end{equation}
\end{lemma}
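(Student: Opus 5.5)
The plan is to avoid the log-resolution entirely and work directly with the factorization \eqref{eq:factorization}. After shrinking $U$, we may assume that $u,h_1,\ldots,h_N$ and their first derivatives are bounded on $U$, that $|u|$ is bounded below by a positive constant, and, since $h_j(0)=0$, that $|h_j|\le1$ on $U$ for every $j$. The case $m_0=1$ is immediate: the exponent $1-1/m_0$ vanishes and \eqref{eq:gradient-f} just says that $|\partial f|$ is bounded on $U$. So the real content is for $m_0\ge2$.

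On $U\setminus Z(f)$, I would take the logarithmic derivative of \eqref{eq:factorization}:
\[
\partial f=f\Bigl(\frac{\partial u}{u}+\sum_{j=1}^N m_j\frac{\partial h_j}{h_j}\Bigr),
\qquad\text{so}\qquad
|\partial f|\le C\Bigl(|f|+\sum_{j=1}^N\frac{|f|}{|h_j|}\Bigr).
\]
The first term is harmless: $|f|\le C$ and $1-1/m_0\le1$, so $|f|\le C|f|^{1-1/m_0}$. For the $j$-th term, I will show that $|f|\le C|h_j|^{m_0}$. The bound $|h_i|\le1$ gives
\[
|f|=|u|\prod_i|h_i|^{m_i}\le C|h_j|^{m_j}\le C|h_j|^{m_0},
\]
using $m_j\ge m_0$ and $|h_j|\le1$. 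Hence $|h_j|^{-1}\le C^{1/m_0}|f|^{-1/m_0}$, and therefore
\[
\frac{|f|}{|h_j|}\le C'|f|^{1-1/m_0}.
\]
Summing over $j$ proves \eqref{eq:gradient-f} on $U\setminus Z(f)$.

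It remains to treat points of $Z(f)$ when $m_0\ge2$. There the right-hand side of \eqref{eq:gradient-f} vanishes, so I need $\partial f=0$ on $Z(f)$. This follows by continuity from the estimate just proved. The set $U\setminus Z(f)$ is dense in $U$, and at points of it near a given $z_0\in Z(f)$ we have $|\partial f|\le C|f|^{1-1/m_0}$, which tends to $0$ as we approach $z_0$ because $1-1/m_0>0$. Since $\partial f$ is continuous, $\partial f(z_0)=0$, and the inequality holds on all of $U$.

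I do not expect a genuine obstacle. The only points needing care are the normalization $|h_j|\le1$ (which is what allows replacing $m_j$ by the smaller exponent $m_0$) and the separate handling of $Z(f)$. If one preferred the resolution machinery of Section~\ref{sec:resolution}, the valuative inequality of Corollary~\ref{cor:first-second-valuations} would not suffice on its own. It would require $\alpha_E\ge m_0A_U(E)$, which can fail for exceptional divisors; this is why the elementary factorization argument is the right route.
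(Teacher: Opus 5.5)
Your proof is correct and follows the paper's own argument: you take the logarithmic derivative of the factorization \eqref{eq:factorization}, use the normalization $|h_k|\le1$ to get $|h_j|^{-1}\le C|f|^{-1/m_0}$, and treat $Z(f)$ separately. Your continuity-and-density argument for $\partial f=0$ on $Z(f)$ when $m_0\ge2$ supplies a justification that the paper only asserts.
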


\begin{proof}
On $U\setminus Z(f)$, logarithmic differentiation of
\eqref{eq:factorization} gives
\begin{equation}\label{eq:log-differentiate-f}
\frac{\partial_i f}{f}
=\frac{\partial_i u}{u}+\sum_{j=1}^{N}m_j\frac{\partial_i h_j}{h_j}.
\end{equation}
Shrink $U$ so that $u, u^{-1}$, the first derivatives of $u$ and $h_j$, and
$|h_j|$ are bounded, with $|h_j|\le1$.  For every fixed $j$,
\[
|f|^{1/m_0}
=|u|^{1/m_0}\prod_k|h_k|^{m_k/m_0}
\le C|h_j|^{m_j/m_0}\le C|h_j|.
\]
Thus $|h_j|^{-1}\le C|f|^{-1/m_0}$.  Equation
\eqref{eq:log-differentiate-f} now yields
\[
\frac{|\partial_i f|}{|f|}
\le C(1+|f|^{-1/m_0})
\le C|f|^{-1/m_0},
\]
after shrinking $U$ once more. This proves \eqref{eq:gradient-f} on $U\setminus Z(f)$ . If $m_0=1$, \eqref{eq:gradient-f} is obviously holds for entire $U$, if $m_0 > 1$, both side of \eqref{eq:gradient-f} are 0.  The proof is completed. 
\end{proof}

\begin{proof}[Proof the $L^p$ criterion of Theorem~\ref{thm:laplace-intro}]
Suppose first that $G\in L^p_{\mathrm{loc}}(U)$.  Choose an irreducible component
$\{h_j=0\}$ with $m_j=m_0$ and a smooth point belonging to no other
component.  By the coordinate reduction \eqref{eq:local-model-unit}, we may
assume locally that $f=w_1^{m_0}$.  Lemma~\ref{lem:radial-model} then gives
\[
\int_{-\infty}^{A}|\chi''(s)|^p
 e^{\frac{2(1-p)}{m_0}s}\,\mathrm{d} s<\infty.
\]

Conversely, fix $K\Subset U$. By \eqref{eq:uf-derivatives}, \eqref{e92901} and the coarea formula,
\begin{align*}
\int_{K\cap\{0<|f|<r_0\}}|G|^p\,\mathrm{d}V
&\le C\int_{K\cap\{0<|f|<r_0\}}
|\chi''(\log|f|)|^p
\frac{|\partial f|^{2p}}{|f|^{2p}}\,\mathrm{d}V\\
&=C\int_0^{r_0}
|\chi''(\log r)|^p r^{-2p}
\left(
\int_{K\cap\{|f|=r\}}
|\partial f|^{2p-1}\,\mathrm{d}S
\right)\mathrm{d}r.
\end{align*}
Because $2p-2\ge0$, we deduce fro Lemma~\ref{lem:gradient-f} and
Proposition \ref{prop:first-energy} that
\begin{align*}
\int_{K\cap\{0<|f|<r_0\}}|G|^p\,\mathrm{d}V
&\le C\int_0^{r_0}|\chi''(\log r)|^p r^{-2p} r^{(2p-2)(1-1/m_0)}
 \left(\int_{K\cap\{|f|=r\}}|\partial f| \,\mathrm{d}S\right)\,\mathrm{d} r\\
&\le C\int_0^{r_0}|\chi''(\log r)|^p
 r^{-2p+(2p-2)(1-1/m_0)+1}\,\mathrm{d} r\\
&=C\int_0^{r_0}|\chi''(\log r)|^p
 r^{-1-2(p-1)/m_0}\,\mathrm{d} r.
\end{align*}
The change of variables $s=\log r$ turns the last expression into
\[
C\int_{-\infty}^{\log r_0}|\chi''(s)|^p
 e^{\frac{2(1-p)}{m_0}s}\,\mathrm{d} s,
\]
which is finite by hypothesis.  Since $K$ was arbitrary, $G\in L^p_{\mathrm{loc}}(U)$.
\end{proof}

\subsection{The distributional Laplacian}

We first remove the divisor under the condition $\ell_\chi=0$ .

\begin{proposition}
\label{prop:no-residual-measure}
If $\ell_\chi=0$, then
\begin{equation}\label{eq:no-residual-measure}
\Delta v=G
\end{equation}
in the sense of distribution.
\end{proposition}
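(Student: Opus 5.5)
The plan is to test $\Delta v$ against $\varphi\in C_c^\infty(\Omega)$, where $\Omega\Subset U$ is a smooth domain, and to integrate by parts on the truncated domains $\Omega_\varepsilon=\Omega\cap\{|f|>\varepsilon\}$ from \eqref{eq:truncated-domain}. We take $\varepsilon=\varepsilon_j\to0$ along the Sard sequence, so that only the moving boundary $\Omega\cap\{|f|=\varepsilon\}$ contributes. On $\Omega_\varepsilon$ the function $v$ is $C^2$ and its classical Laplacian is $G$. Green's second identity therefore gives
\[
\int_{\Omega_\varepsilon} v\,\Delta\varphi\,\mathrm{d}V
=\int_{\Omega_\varepsilon}\varphi\, G\,\mathrm{d}V
+\int_{\Omega\cap\{|f|=\varepsilon\}}\Bigl(v\,\partial_\nu\varphi-\varphi\,\partial_\nu v\Bigr)\mathrm{d}S ,
\]
where $\nu$ is the unit normal, up to a sign fixed by orientation. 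It is parallel to $\nabla|f|$.

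The first step is to let $\varepsilon\to0$ on the left-hand side. Since $v\in L^1_{\mathrm{loc}}(U)$ and $Z(f)$ has measure zero, dominated convergence gives $\int_{\Omega_\varepsilon}v\Delta\varphi\to\int_\Omega v\Delta\varphi$.

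The second step is to show that both boundary terms vanish in the limit. For the first term, the linear growth bound \eqref{eq:chi-linear-growth} gives $|v|\le C(1+|\log\varepsilon|)$ on $\{|f|=\varepsilon\}$. Proposition~\ref{area estimate} bounds the area of the level set by $O(\varepsilon^\gamma)$ with $\gamma>0$. Hence
\[
\Bigl|\int v\,\partial_\nu\varphi\,\mathrm{d}S\Bigr|\le C(1+|\log\varepsilon|)\,\varepsilon^\gamma\longrightarrow0 .
\]
For the second term, $\nabla v=\chi'(\log|f|)\nabla\log|f|$ and $\nu$ is parallel to $\nabla|f|$, so on the level set
\[
|\partial_\nu v|=\chi'(\log\varepsilon)\,\frac{|\partial f|}{\varepsilon}.
\]
Proposition~\ref{prop:first-energy}, applied with $K=\operatorname{supp}\varphi$, then bounds the term by
\[
C\,\chi'(\log\varepsilon)\,\varepsilon^{-1}\int_{K\cap\{|f|=\varepsilon\}}|\partial f|\,\mathrm{d}S\le C\,\chi'(\log\varepsilon).
\]
This tends to $\ell_\chi=0$. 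This is exactly where the hypothesis $\ell_\chi=0$ is used. In general the same computation produces the residual term $2\pi\ell_\chi\mu_f$, by the asymptotics in Proposition~\ref{prop:first-energy}.

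The third step is the interior term, and the main point is to get $G\in L^1_{\mathrm{loc}}$ without assuming the integral criterion. First take $\varphi\ge0$. Since $G\ge0$, the identity together with the two boundary limits shows that $\int_{\Omega_\varepsilon}\varphi G$ converges to $\int_\Omega v\Delta\varphi$ and, in particular, stays bounded. Monotone convergence then gives $\varphi G\in L^1$ and $\int_\Omega\varphi G=\int_\Omega v\Delta\varphi$. Choosing $\varphi\equiv1$ on a given compact set shows $G\in L^1_{\mathrm{loc}}(U)$. For a general $\varphi$, dominated convergence, now justified by $G\in L^1_{\mathrm{loc}}$, gives $\int_\Omega v\Delta\varphi=\int_\Omega\varphi G$. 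This is \eqref{eq:no-residual-measure}. The only delicate point is to check that the boundary estimates are uniform on $\operatorname{supp}\varphi$ along the Sard sequence, which the cited level-set bounds supply.
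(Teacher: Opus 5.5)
Your proof is correct and follows the paper's argument. You use Green's identity on $\Omega_\varepsilon$ along the Sard sequence, and you kill the two boundary terms the same way: by the linear growth bound \eqref{eq:chi-linear-growth} with Proposition~\ref{area estimate}, and by $\chi'(\log\varepsilon)\to\ell_\chi=0$ with Proposition~\ref{prop:first-energy}. The only difference is that you get $G\in L^1_{\mathrm{loc}}$ from the identity itself, using $G\ge0$ and monotone convergence, whereas the paper cites the $p=1$ case of \eqref{eq:laplace-criterion-intro}; its integral condition $\int_{-\infty}^{A}\chi''(s)\,\mathrm{d}s=\chi'(A)-\ell_\chi<\infty$ holds automatically, so nothing is being assumed there, and your variant is simply a bit more self-contained.
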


\begin{proof}
Let $\varphi\in C_0^\infty(U)$ and choose a smooth domain
$\Omega\Subset U$ such that
$\operatorname{supp}\varphi\Subset\Omega$.
For $\varepsilon=\varepsilon_j$ chosen as above, Green's
formula applies on $\Omega_\varepsilon$.
Since $\varphi$ vanishes near $\partial\Omega$, we obtain
\begin{equation}\label{eq:green-truncated}
\begin{aligned}
\int_{\Omega_\varepsilon}v\Delta\varphi\,\mathrm{d} V
={}&\int_{\Omega_\varepsilon}G\varphi\,\mathrm{d} V\\
&+\int_{\Omega\cap\{|f|=\varepsilon\}}
   (v\partial_\nu\varphi-\varphi\partial_\nu v)\,\mathrm{d}S .
\end{aligned}
\end{equation}
By \eqref{eq:chi-linear-growth} and Proposition  \ref{area estimate},
\[
\left|\int_{\{|f|=\varepsilon\}}v\partial_\nu\varphi\,\mathrm{d}S \right|
\le  C(1+|\log\varepsilon|)\varepsilon^\gamma\to0 
\]
Moreover,
\[
|\partial_\nu v|
\le |\chi'(\log\varepsilon)|\frac{|\partial f|}{\varepsilon}.
\]
Therefore Proposition  \ref{prop:first-energy} yields
\begin{align*}
    \left|\int_{\{|f|=\varepsilon\}}\varphi\partial_\nu v\,\mathrm{d}S \right|
&\le C\frac{|\chi'(\log\varepsilon)|}{\varepsilon}\int_{\{|f|=\varepsilon\}}|\partial f|\,\mathrm{d}S  \\
&\le C\chi'(\log\varepsilon)\longrightarrow0,
\end{align*}
because $\ell_\chi=0$.  Let
$\varepsilon \rightarrow0$.  Since $v\in L^1_{\mathrm{loc}}(U)$,
\[
\int_{\Omega_{\varepsilon}}v\Delta\varphi\,\mathrm{d} V
\longrightarrow\int_Uv\Delta\varphi\,\mathrm{d} V.
\]
The $p=1$ case of
\eqref{eq:laplace-criterion-intro} gives $G\in L^1_{\mathrm{loc}}(U)$, and
therefore
\[
\int_{\Omega_{\varepsilon}}G\varphi\,\mathrm{d} V
\longrightarrow\int_UG\varphi\,\mathrm{d} V.
\]
Both boundary integrals tend to zero by the estimates above.  Passing to the
limit in \eqref{eq:green-truncated} yields
\[
\int_Uv\Delta\varphi\,\mathrm{d} V
=\int_UG\varphi\,\mathrm{d} V
\]
for every test function $\varphi$, which is precisely
\eqref{eq:no-residual-measure}.
\end{proof}

\begin{proof}[Completion of the proof of Theorem~\ref{thm:laplace-intro}]
Define
\[
\widetilde\chi(s)=\chi(s)-\ell_\chi s.
\]
Since $\chi'$ is non-decreasing and tends to $\ell_\chi$ at $-\infty$,
$\widetilde\chi$ is again convex and non-decreasing,
$\widetilde\chi''=\chi''$, and
$\lim_{s\to-\infty}\widetilde\chi'(s)=0$.  Proposition
\ref{prop:no-residual-measure} applied to
$\widetilde v=\widetilde\chi(\log|f|)$ yields
\[
\Delta\widetilde v=G.
\]
Since $v=\widetilde v+\ell_\chi\log|f|$, the Poincar\'e--Lelong formula, traced with
the Euclidean K\"ahler form, implies
\[
\Delta v=G+\ell_\chi\Delta\log|f|=G+2\pi\ell_\chi\mu_f.
\]
The divisor measure $\mu_f$ is nonzero because $f(0)=0$.  Thus the concentrated
term vanishes exactly when $\ell_\chi=0$.
\end{proof}

\begin{remark}\label{rem:divisor-mass}
Formula \eqref{eq:full-distribution-laplace-intro} explains the role of the
boundary condition.  Integrability of the classical function $G$ alone says
nothing about a measure already concentrated on the zero divisor.  The number
$\ell_\chi$ is exactly the coefficient of that measure.
\end{remark}

\section{Sobolev regularity}\label{sec:sobolev}
In this section, we prove Theorem \ref{thm:w1-intro} and Theorem \ref{thm:w2-intro}.
\subsection{First-order regularity for exponents at least two}

\begin{proof}[Proof of Theorem~\ref{thm:w1-intro}]
Suppose first that $v\in W^{1,p}_{\mathrm{loc}}(U)$.  On the open set
$U\setminus Z(f)$, the function is of $C^2$ class, so its weak gradient
coincides there with the classical vector field $H$.  Since
$Z(f)$ has Lebesgue measure zero, the zero extension of $H$ belongs
to $L^p_{\mathrm{loc}}(U)$.  Next, suppose that $H\in L^p_{\rm loc}(U)$.  Choose an irreducible component
$\{h_j=0\}$ with $m_j=m_0$ and a smooth point which lies on no other
component.  By \eqref{eq:local-model-unit}, after taking a holomorphic
$m_0$-th root of the unit and changing the normal coordinate, one has
$f=w_1^{m_0}$.  Thus \eqref{eq:model-gradient} gives
\[
\int_{-\infty}^{A}|\chi'(s)|^p
e^{(2-p)s/m_0}\,\mathrm{d} s<\infty.
\]

Now assume this integral is finite and fix $K\Subset U$.  By coarea formula we get
\begin{align*}
\int_{K\cap\{0<|f|<r_0\}}|H|^p\,\mathrm{d}V
&\le C\int_0^{r_0}|\chi'(\log r)|^p r^{-p}
 \left(\int_{K\cap\{|f|=r\}}|\partial f|^{p-1}\,\mathrm{d}S \right)\,\mathrm{d} r.
\end{align*}
Because $p\ge2$, we may leave one copy of $|\partial f|$ in the level
integral and estimate the remaining $p-2$ copies by
Lemma~\ref{lem:gradient-f}.  Using Proposition  \ref{prop:first-energy}, we obtain
\begin{align*}
\int_{K\cap\{0<|f|<r_0\}}|H|^p\,\mathrm{d}V
&\le C\int_0^{r_0}|\chi'(\log r)|^p r^{-p}r^{(p-2)(1-1/m_0)}
 \left(\int_{K\cap\{|f|=r\}}|\partial f|\,\mathrm{d}S \right)\,\mathrm{d} r\\
&\le C\int_0^{r_0}|\chi'(\log r)|^p
 r^{-p+(p-2)(1-1/m_0)+1}\,\mathrm{d} r\\
&=C\int_0^{r_0}|\chi'(\log r)|^p
 r^{-1-(p-2)/m_0}\,\mathrm{d} r\\
&=C\int_{-\infty}^{\log r_0}|\chi'(s)|^p
 e^{\frac{2-p}{m_0}s}\,\mathrm{d} s<\infty.
\end{align*}

It remains to identify $H$ as the weak gradient.
Let $\varphi\in C_0^\infty(U)$ and choose a smooth domain
$\Omega\Subset U$ with
$\operatorname{supp}\varphi\Subset\Omega$.
For $\varepsilon=\varepsilon_j$ chosen as above,
integration by parts on $\Omega_\varepsilon$ gives
\[
\int_{\Omega_\varepsilon}v\,\partial_j\varphi\,\mathrm{d} V
=-\int_{\Omega_\varepsilon}H_j\varphi\,\mathrm{d} V
+\int_{\Omega\cap\{|f|=\varepsilon\}}
v\varphi\nu_j\,\mathrm{d}S .
\]
There is no contribution from the fixed part $\partial \Omega$ of the boundary because
$\varphi$ vanishes there.  The remain boundary term is bounded by
\[
C\|\varphi\|_{L^\infty}|\chi(\log\varepsilon)|
 \mathcal{H}^{2n-1}(\Omega\cap\{|f|=\varepsilon\}),
\]
which tends to zero by \eqref{eq:chi-linear-growth} and
Proposition  \ref{area estimate}.  Since $v\in L^1_{\mathrm{loc}}$ and
$H\in L^p_{\mathrm{loc}}\subset L^1_{\mathrm{loc}}$, the two volume
integrals converge as $\varepsilon \rightarrow 0$.  Therefore
\[
\int_U v\,\partial_j\varphi\,\mathrm{d} V
=-\int_U H_j\varphi\,\mathrm{d} V.
\]
Thus $H=\nabla v$ distributionally and $v\in W^{1,p}_{\mathrm{loc}}(U)$.
\end{proof}

\subsection{Second-order regularity for $L^p_{loc},1<p<\infty$}

\begin{proposition}\label{prop:w2p}
Let $1<p<\infty$.  Then
\[
v\in W^{2,p}_{\mathrm{loc}}(U)
\quad\Longleftrightarrow\quad
\ell_\chi=0
\quad\text{and}\quad
G\in L^p_{\mathrm{loc}}(U).
\]
\end{proposition}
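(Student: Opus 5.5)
Both directions follow from Theorem~\ref{thm:laplace-intro} combined with local Calder\'on--Zygmund theory. The only delicate point is the necessity of $\ell_\chi=0$ for every $1<p<\infty$, including $p$ close to $1$.

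\emph{Necessity.} Assume $v\in W^{2,p}_{\mathrm{loc}}(U)$. Then every weak second derivative lies in $L^p_{\mathrm{loc}}\subset L^1_{\mathrm{loc}}$, so the distributional Laplacian $\Delta v=\operatorname{tr}D^2v$ is an $L^1_{\mathrm{loc}}$ function. On the open set $U\setminus Z(f)$, where $v$ is $C^2$, the weak Hessian coincides with the classical one. Hence $\Delta v=G$ almost everywhere, because $Z(f)$ is Lebesgue-null. The bound $|G|\le C|D^2v|$ gives $G\in L^p_{\mathrm{loc}}(U)$. Now compare with \eqref{eq:full-distribution-laplace-intro}: $\Delta v=G+2\pi\ell_\chi\mu_f$. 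Since $\Delta v$ and $G$ are the same $L^1_{\mathrm{loc}}$ function, the measure $2\pi\ell_\chi\mu_f$ must vanish as a distribution. The measure $\mu_f$ is nonzero and carried by the Lebesgue-null set $Z(f)$, so it is not absolutely continuous. This forces $\ell_\chi=0$.

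\emph{Sufficiency.} Assume $\ell_\chi=0$ and $G\in L^p_{\mathrm{loc}}(U)$. Proposition~\ref{prop:no-residual-measure} gives $\Delta v=G\in L^p_{\mathrm{loc}}(U)$ distributionally. We already know $v\in L^p_{\mathrm{loc}}(U)$, since plurisubharmonic functions lie in every $L^q_{\mathrm{loc}}$. The local Calder\'on--Zygmund estimate then applies: if $u\in L^1_{\mathrm{loc}}$ and $\Delta u\in L^p_{\mathrm{loc}}$ with $1<p<\infty$, then $u\in W^{2,p}_{\mathrm{loc}}$. The cleanest route is to fix balls $B'\Subset B\Subset U$ and let $w$ be the Newtonian potential of $G\mathbf 1_B$. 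By Calder\'on--Zygmund, $w\in W^{2,p}(B)$ and $\Delta w=G$ on $B$. Then $v-w$ is a harmonic distribution on $B$, hence smooth by Weyl's lemma. It follows that $v=w+(v-w)\in W^{2,p}(B')$. Covering any compact subset by finitely many such balls gives $v\in W^{2,p}_{\mathrm{loc}}(U)$.

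The main step to check is the identification of the weak Laplacian with $G$ in the necessity direction. This is routine, because a $W^{2,p}$ function's weak derivatives are determined almost everywhere and agree with the classical ones on the open set where $v$ is smooth. Note also that the argument does not use $p>1$ in the necessity direction. The restriction $1<p<\infty$ enters only in sufficiency, through Calder\'on--Zygmund, which is exactly why the endpoint $p=1$ needs a separate treatment.
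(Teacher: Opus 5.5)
Your proof is correct and follows essentially the paper's route. Sufficiency is Proposition~\ref{prop:no-residual-measure} plus local Calder\'on--Zygmund theory; you spell out the Newtonian-potential and Weyl-lemma argument that the paper only cites. Necessity compares with \eqref{eq:full-distribution-laplace-intro}, and here you identify the $L^p_{\mathrm{loc}}$ function $\Delta v$ with $G$ directly on the open set $U\setminus Z(f)$, whereas the paper invokes uniqueness of the Lebesgue decomposition to split off $\ell_\chi\mu_f$; this difference is cosmetic.
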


\begin{proof}
Suppose first that $\ell_\chi=0$ and
$G\in L^p_{\mathrm{loc}}(U)$.  Then
by Theorem~\ref{thm:laplace-intro} we have
\[
\Delta v=G
\qquad\text{in }\mathcal D'(U).
\]
Since $v\in L^p_{\mathrm{loc}}(U)$ by
\eqref{eq:chi-linear-growth}, the local Calderón-Zygmund
regularity theorem for the Poisson equation implies that
$v\in W^{2,p}_{\mathrm{loc}}(U)$; see
\cite[Theorem~9.9]{GilbargTrudinger}.

Conversely, suppose that
$v\in W^{2,p}_{\mathrm{loc}}(U)$, and write
$\Delta v=F$ with $F\in L^p_{\mathrm{loc}}(U)$.  By
\eqref{eq:full-distribution-laplace-intro},
\[
F\,\mathrm dV
=
G\,\mathrm dV+2\pi\ell_\chi\mu_f
\]
as locally finite measures.  Here $G\in L^1_{\mathrm{loc}}(U)$
by the $p=1$ part of Theorem~\ref{thm:laplace-intro}, so both
$F\,\mathrm dV$ and $G\,\mathrm dV$ are absolutely continuous
with respect to $\mathrm dV$, whereas $\mu_f$ is singular with
respect to $\mathrm dV$, because $\mu_f$ is supported on the
Lebesgue-null set $Z(f)$.  The uniqueness of the Lebesgue
decomposition \cite[Theorem~6.10]{RudinRealComplex} therefore
gives
\[
F\,\mathrm dV=G\,\mathrm dV,
\qquad
\ell_\chi\mu_f=0.
\]
Since $f(0)=0$, the divisor measure $\mu_f$ is nonzero, and hence
$\ell_\chi=0$.  Moreover, $G=F$ almost everywhere, so
$G\in L^p_{\mathrm{loc}}(U)$.
\end{proof}

Combining Proposition~\ref{prop:w2p} with
Theorem~\ref{thm:laplace-intro} proves part~(i) of
Theorem~\ref{thm:w2-intro}.

\subsection{Second-order regularity for $L^1_{loc}$}

The endpoint argument estimates the full Hessian rather than only its trace.

\begin{proof}[Proof of Theorem~\ref{thm:w2-intro}(ii)]
Assume first that $v\in W^{2,1}_{\mathrm{loc}}(U)$.  Choose a smooth point of
a component of multiplicity $m_0$ which lies on no other component.  As in
\eqref{eq:local-model-unit}, a  change of holomorphic coordinate
reduces $f$ to $w_1^{m_0}$.  Put $r=|w_1|$ and
$F(r)=\chi(m_0\log r)$.  Using Lemma~\ref{lem:radial-model} we obtain
$\chi'\in L^1((-\infty,A))$.

Conversely, suppose $\chi'\in L^1(({-\infty},A))$.  Since $\chi'$ is
nonnegative and non-decreasing,
\begin{equation}\label{eq:ell-zero-from-L1}
\ell_\chi=0,
\qquad
\int_{-\infty}^{A}\chi''(s)\,\mathrm{d} s=\chi'(A)<\infty.
\end{equation}
Fix $K\Subset U$. To estimate the right-hand side of
\eqref{eq:hessian-size-estimate} near $Z(f)$, set
\begin{align*}
I_1&=
\int_{K\cap\{0<|f|<r_0\}}
|\chi''(\log|f|)|
\frac{|\partial f|^2}{|f|^2}\,\mathrm{d}V,\\
I_2&=
\int_{K\cap\{0<|f|<r_0\}}
|\chi'(\log|f|)|
\frac{|\partial f|^2}{|f|^2}\,\mathrm{d}V,\\
I_3&=
\int_{K\cap\{0<|f|<r_0\}}
|\chi'(\log|f|)|
\frac{|D^2f|}{|f|}\,\mathrm{d}V.
\end{align*}
From the $p=1$ case of Theorem~\ref{thm:laplace-intro} and
\eqref{eq:ell-zero-from-L1} we obtain $I_1<\infty$.  For the second term, coarea
and Proposition  \ref{prop:first-energy} imply
\begin{align*}
I_2
&\le C\int_0^{r_0}\chi'(\log r)r^{-2}
 \left(\int_{K\cap\{|f|=r\}}|\partial f|\,\mathrm{d}S \right)\,\mathrm{d} r
\le C\int_{-\infty}^{\log r_0}\chi'(s)\,\mathrm{d} s<\infty.
\end{align*}
For the third term, Proposition~\ref{prop:second-energy} yields
\begin{align*}
I_3
&\le\int_0^{r_0}\frac{\chi'(\log r)}r
 \left(\int_{K\cap\{|f|=r\}}
       \frac{|D^2f|}{|\partial f|}\,\mathrm{d}S \right)\,\mathrm{d} r
\le C\int_{-\infty}^{\log r_0}\chi'(s)\,\mathrm{d} s<\infty.
\end{align*}
Thus every classical second derivative of $v$ on $U\setminus Z(f)$, extended
by zero on $Z(f)$, belongs to $L^1_{\mathrm{loc}}(U)$.

It remains to identify these $L^1$ functions with the weak second
derivatives. Since $0\le\chi'(s)\le\chi'(A)$ and
$\chi'\in L^1((-\infty,A))$, we have
\[
\int_{-\infty}^{A}|\chi'(s)|^2\,\mathrm{d}s
\le\chi'(A)\int_{-\infty}^{A}\chi'(s)\,\mathrm{d}s<\infty.
\]
Theorem~\ref{thm:w1-intro} with $p=2$ therefore shows
$v\in W^{1,2}_{\mathrm{loc}}(U)$, with weak gradient $H$.
Fix $\varphi\in C_0^\infty(U)$ and a smooth domain
$\Omega\Subset U$ containing its support. Let $L_{ij}$ denote the zero extension of the classical derivative
$\partial_iH_j$ from $U\setminus Z(f)$.  The estimates for
$I_1,I_2,I_3$ show that $L_{ij}\in L^1_{\mathrm{loc}}(U)$. For $\varepsilon=\varepsilon_j$ chosen as above, a second integration by parts yields
\[
\int_{\Omega_\varepsilon}L_{ij}\varphi\,\mathrm{d} V
=-\int_{\Omega_\varepsilon}H_j\partial_i\varphi\,\mathrm{d} V
+\int_{\Omega\cap\{|f|=\varepsilon\}}
H_j\varphi\nu_i\,\mathrm{d}S.
\]
By \ref{estimate of gradient} and Proposition~\ref{prop:first-energy}, the boundary terms satisfies
\begin{align*}
\left|
\int_{\Omega\cap\{|f|=\varepsilon\}}
H_j\varphi\nu_i\,\mathrm{d}S
\right|
&\le
C\frac{\chi'(\log\varepsilon)}{\varepsilon}
\int_{\Omega\cap\{|f|=\varepsilon\}}
|\partial f|\,\mathrm{d}S\\
&\le
C\frac{\chi'(\log\varepsilon)}{\varepsilon}\,\varepsilon
=
C\chi'(\log\varepsilon)
\longrightarrow0,
\end{align*}
because a nonnegative non-decreasing function that is integrable at
$-\infty$ must tend to zero there.  Letting $\varepsilon$ tend
to zero and using the $L^1$ convergence of both volume integrals, we have
\[
\int_UL_{ij}\varphi\,\mathrm{d} V
=-\int_UH_j\partial_i\varphi\,\mathrm{d} V.
\]
Thus $L_{ij}=\partial_iH_j=\partial_{ij}v$ in distributions.  All weak
second derivatives belong to $L^1_{\mathrm{loc}}$, and consequently
$v\in W^{2,1}_{\mathrm{loc}}(U)$.

Finally, since $\chi'\ge0$, we have
\[
\int_{-\infty}^{A}\chi'(s)\,\mathrm{d} s
=\chi(A)-\lim_{s\to-\infty}\chi(s).
\]
This proves the last equivalence in \eqref{eq:w21-criterion-intro}.
\end{proof}

\begin{remark}\label{rem:L1-not-CZ}
The proof uses the trace term controlled by $\chi''$ and the tangential
Hessian terms controlled by $\chi'$.  This is the analytic reason why
$\Delta v \in L^1_{\mathrm{loc}}$ does not by itself imply $v\in W^{2,1}_{\mathrm{loc}}$.
\end{remark}

\section{Applications}\label{sec:examples}
In this section, we give two applications of our main results.
\subsection{Counterexamples to Calder\'on-Zygmund theory and beyond}

Assume that $A<-e$ if needed. For $t>0$, set
\[
\log^{(1)}t=\log t,
\qquad
\log^{(j+1)}t=\log\bigl(\log^{(j)}t\bigr).
\]
The following functions are convex and non-decreasing on a sufficiently far left half-line:
\[
\chi_\alpha(s)=-(-s)^\alpha,
\qquad 0<\alpha<1,
\]
\[
\chi_{\log^{(m)}}(s)=-\log^{(m)}(-s),
\qquad m\geq 1,
\]
and
\[
\chi_{\mathrm{inv}}(s)=\frac{1}{\log(-s)}.
\]
Here $\chi_{\log}=\chi_{\log^{(1)}}$, and for the last function we assume $A<-e$.

For the power function,
\[
\chi_\alpha'(s)
   =\alpha(-s)^{\alpha-1},
\qquad
\chi_\alpha''(s)
   =\alpha(1-\alpha)(-s)^{\alpha-2}.
\]
For the logarithmic function,
\[
\chi_{\log}'(s)=\frac{1}{-s},
\qquad
\chi_{\log}''(s)=\frac{1}{(-s)^2}.
\]
More generally, for $m\geq1$,
\[
\chi_{\log^{(m)}}'(s)
 =
 \frac{1}
 {(-s)\displaystyle\prod_{j=1}^{m-1}\log^{(j)}(-s)},
\]
and
\[
\chi_{\log^{(m)}}''(s)
 =
 \frac{1}
 {(-s)^2\displaystyle\prod_{j=1}^{m-1}\log^{(j)}(-s)}
 \left(
  1+
  \sum_{k=1}^{m-1}
  \frac{1}
  {\displaystyle\prod_{j=1}^{k}\log^{(j)}(-s)}
 \right).
\]
As usual, an empty product is interpreted as $1$ and an empty sum as $0$, so these formulas also include the case $m=1$. Direct differentiation shows
\[
\chi_{\mathrm{inv}}'(s)
 =
 \frac{1}{(-s)\log^2(-s)}>0,
\qquad
\chi_{\mathrm{inv}}''(s)
 =
 \frac{\log(-s)+2}
 {(-s)^2\log^3(-s)}>0.
\]
Thus $\chi_{\mathrm{inv}}$ is convex and non-decreasing on $(-\infty,A]$. Moreover,
\[
\lim_{s\to-\infty}\chi_{\mathrm{inv}}(s)=0
\]
and
\[
\int_{-\infty}^{A}\chi_{\mathrm{inv}}'(s)\,ds
 =
 \left[\frac{1}{\log(-s)}\right]_{-\infty}^{A}
 =
 \frac{1}{\log(-A)}
 <\infty.
\]
In particular,
\[
\ell_{\chi_{\mathrm{inv}}}=0,
\qquad
\chi_{\mathrm{inv}}'\in L^1((-\infty,A)).
\]

The criteria yield the following summary of regularity properties:

\begin{table}[ht!]
\centering
\renewcommand{\arraystretch}{1.4}
\setlength{\tabcolsep}{12pt}
\begin{tabular}{l c c c}
\hline\hline
function & $G \in L^1_{\mathrm{loc}}$ & $v \in W^{1,2}_{\mathrm{loc}}$ & $v \in W^{2,1}_{\mathrm{loc}}$ \\
\hline
$-(-s)^{\alpha} \quad (0 < \alpha < 1)$ & yes & iff $\alpha < \frac{1}{2}$ & no \\
$-\log(-s)$ & yes & yes & no \\
$-\log^{(m)}(-s) \quad (m \ge 2)$ & yes & yes & no \\
$\frac{1}{\log(-s)} \quad (s \le A < -e)$ & yes & yes & yes \\
\hline\hline
\end{tabular}
\end{table}

\noindent

\begin{remark}
The counterexamples constructed in \cite{PSWW} are in fact covered by the general criteria obtained in our paper by choosing
$
\chi(s)=-\log(-s).
$
The two logarithmic constructions in \cite{PSWW}, corresponding respectively to failure and validity of the endpoint $W^{2,1}$-regularity, arise as particular choices of the function $\chi$ in our general framework.
\end{remark}

\begin{proof}[Proof of Corollary~\ref{cor:psww}]
    The proof of Corollary~\ref{cor:psww} follows easily from the table above.
\end{proof}

\begin{corollary}\label{cor:w21-implies-w12}
In the above setting, we have
\[
v\in W^{2,1}_{\mathrm{loc}}(U)\quad\Longrightarrow\quad
v\in W^{1,2}_{\mathrm{loc}}(U).
\]
The converse fails even for $f(z)=z$.
\end{corollary}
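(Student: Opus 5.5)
The forward implication follows by chaining the two criteria already established. If $v\in W^{2,1}_{\mathrm{loc}}(U)$, then Theorem~\ref{thm:w2-intro}(ii) gives $\chi'\in L^1((-\infty,A))$. Since $\chi'$ is nonnegative and non-decreasing, $0\le\chi'(s)\le\chi'(A)$ on $(-\infty,A]$. Hence
\[
\int_{-\infty}^{A}\chi'(s)^2\,\mathrm{d}s\le\chi'(A)\int_{-\infty}^{A}\chi'(s)\,\mathrm{d}s<\infty .
\]
This is exactly the criterion \eqref{eq:w1-criterion-intro} with $p=2$, where the weight $e^{\frac{2-p}{m_0}s}$ is identically $1$ and $m_0$ plays no role. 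Theorem~\ref{thm:w1-intro} then yields $v\in W^{1,2}_{\mathrm{loc}}(U)$. The same computation already appears at the end of the proof of Theorem~\ref{thm:w2-intro}(ii), so this direction is immediate.

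For the failure of the converse, I will take $f(z)=z_1$, so $m_0=1$, together with $\chi(s)=-\log(-s)$ on $(-\infty,A]$ with $A<-1$. This function is convex and non-decreasing, with $\chi'(s)=1/(-s)$. Then $\int_{-\infty}^{A}\chi'^2\,\mathrm{d}s=\int_{-\infty}^{A}s^{-2}\,\mathrm{d}s<\infty$, so $v\in W^{1,2}_{\mathrm{loc}}$ by Theorem~\ref{thm:w1-intro}. On the other hand, $\int_{-\infty}^{A}\chi'\,\mathrm{d}s=\int_{-\infty}^{A}\mathrm{d}s/(-s)=\infty$, equivalently $\chi(s)\to-\infty$, so $v\notin W^{2,1}_{\mathrm{loc}}$ by Theorem~\ref{thm:w2-intro}(ii). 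This is the row $-\log(-s)$ of the table.

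One could also verify the example directly through Lemma~\ref{lem:radial-model} with $m=1$, since \eqref{eq:model-w21-necessary} already shows the failure of $W^{2,1}$. There is no real obstacle. The only points to check are that the chosen $\chi$ satisfies the standing hypotheses ($C^2$, convex, non-decreasing on the left half-line, with $\log|z_1|\le A$ after shrinking $U$) and that $p=2$ lies in the admissible range $2\le p<\infty$ of Theorem~\ref{thm:w1-intro}.
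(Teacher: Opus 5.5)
Your proposal is correct and follows the paper's proof essentially verbatim: you deduce $\chi'\in L^1((-\infty,A))$ from Theorem~\ref{thm:w2-intro}(ii), bound $\int_{-\infty}^{A}\chi'(s)^2\,\mathrm{d}s\le\chi'(A)\int_{-\infty}^{A}\chi'(s)\,\mathrm{d}s$, and apply Theorem~\ref{thm:w1-intro} with $p=2$. The converse fails via the same example $\chi(s)=-\log(-s)$ with a linear $f$, exactly as in the paper. Your choice $A<-1$ is harmless, since any $A<0$ suffices for this $\chi$.
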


\begin{proof}
Suppose first that $v\in W_{\mathrm{loc}}^{2,1}(U)$. By Theorem $1.3$ (ii),
\[
\chi'\in L^1((-\infty,A)).
\]
Since $\chi$ is convex and non-decreasing, $\chi'$ is nonnegative and non-decreasing. Hence
\[
0\leq \chi'(s)\leq \chi'(A),
\qquad s\leq A,
\]
and therefore
\[
\int_{-\infty}^{A}|\chi'(s)|^2\,ds
\leq
\chi'(A)\int_{-\infty}^{A}\chi'(s)\,ds
<\infty.
\]
For $p=2$, the weighted condition in Theorem $1.2$ reduces precisely to
\[
\chi'\in L^2((-\infty,A)).
\]
It follows that $v\in W_{\mathrm{loc}}^{1,2}(U)$.

To see that the converse fails, take $f(z)=z$ and, after shrinking $U$, let
\[
\chi(s)=-\log(-s),
\qquad s\leq A<-e.
\]
Then by the table above 
\[
-\log(-\log|z|)\in W_{\mathrm{loc}}^{1,2}(U),
\quad
-\log(-\log|z|)\notin W_{\mathrm{loc}}^{2,1}(U).
\]
Thus the converse fails even for $f(z)=z$.
\end{proof}

\subsection{The local domain of the Monge--Amp\`ere operator}\label{sec:MA}

In this subsection $n\ge2$. For locally bounded psh functions the powers
$(dd^cw)^j$ are understood in the Bedford--Taylor sense, defined
recursively by
\[
(dd^cw)^0=1,\qquad
(dd^cw)^j=dd^c\bigl(w(dd^cw)^{j-1}\bigr).
\]
They are positive closed currents, and agree with the usual wedge
products for smooth functions; see \cite{BT82}. For complex differential forms we use
\[
d^c=\frac{i}{2}(\bar\partial-\partial),\qquad
dd^c=i\partial\bar\partial,\qquad
\omega_0=dd^c|z|^2.
\]
For a real-valued $C^1$ function $w$,
$dw\wedge d^cw=i\partial w\wedge\bar\partial w$ is positive, and
\begin{equation}\label{eq:app-energy-trace}
dw\wedge d^cw\wedge\omega_0^{n-1}
=\frac{|\nabla w|^2}{4n}\,\omega_0^n.
\end{equation}

\begin{definition}\label{def:app-D}
A negative psh function $w$ belongs to $\mathcal D(\Omega)$ if there is
a positive Radon measure $\mu$ on $\Omega$ such that, on every open subset
$V\Subset\Omega$, every sequence $w_j\in
C^\infty(V)\cap\operatorname{PSH}(V)$ decreasing to $w|_V$ satisfies
$(dd^cw_j)^n\rightharpoonup\mu|_V$.  We then write
$(dd^cw)^n=\mu$.
\end{definition}

This is the classical local domain with continuity under decreasing
approximation. The definition requires independence of the approximating
sequences.

\begin{definition}\label{def:app-E}
A bounded domain $\Omega$ is hyperconvex if it admits a continuous
negative psh exhaustion $\rho$, so that
$\{\rho<-c\}\Subset\Omega$ for every $c>0$. On such a domain, define
\[
\mathcal E_0(\Omega)=
\left\{w\in\operatorname{PSH}(\Omega)\cap L^\infty(\Omega):
w\le0,\ \lim_{z\to\partial\Omega}w(z)=0,
\ \int_\Omega(dd^cw)^n<\infty\right\}.
\]
The Cegrell class $\mathcal E(\Omega)$ consists of negative psh functions
$w$ such that every point has a neighborhood $V\Subset\Omega$ and a
sequence $w_j\in\mathcal E_0(\Omega)$ decreasing on $\Omega$ and
converging to $w$ on $V$, with
$\sup_j\int_\Omega(dd^cw_j)^n<\infty$.
\end{definition}

These definitions follow \cite{Cegrell04}. We use the following
characterization, with the normalization of $d^c$ fixed above.

\begin{theorem}[B{\l}ocki, \cite{Blocki06}, Theorems 1.1 and 2.4]
\label{thm:app-blocki}
Let $w<0$ be psh on an open set $\Omega\subset\mathbb C^n$, $n\ge2$.
Then $w\in\mathcal D(\Omega)$ if and only if each point has a neighborhood
$V$ admitting smooth negative psh functions $w_j\downarrow w$ such that,
for every $K\Subset V$ and $k=0,\ldots,n-2$,
\begin{equation}\label{eq:app-blocki-energy}
\sup_j\int_K(-w_j)^{n-k-2}
dw_j\wedge d^cw_j\wedge(dd^cw_j)^k
\wedge\omega_0^{n-k-1}<\infty.
\end{equation}
If $w\in\mathcal D(\Omega)$, these bounds hold for every local smooth
decreasing negative psh approximation. On a bounded hyperconvex domain,
$\mathcal D(\Omega)=\mathcal E(\Omega)$ for negative psh functions.
\end{theorem}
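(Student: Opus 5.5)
The statement is quoted from \cite{Blocki06}, so here we only sketch how we would reconstruct it; the rest of the paper uses it as a black box. The plan has three parts: sufficiency of the energy bounds \eqref{eq:app-blocki-energy}, their necessity (including the assertion that they hold for \emph{every} approximation), and the identification $\mathcal D=\mathcal E$ on hyperconvex domains. The basic tool throughout is integration by parts for smooth psh functions against cut-offs, applied to the currents $(-w_j)^{n-k-2}dw_j\wedge d^cw_j\wedge(dd^cw_j)^k\wedge\omega_0^{n-k-1}$.

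\textbf{Sufficiency.} Assume we have $w_j\downarrow w$ satisfying \eqref{eq:app-blocki-energy}.
\begin{enumerate}[label=(\alph*)]
\item First show that the masses $\int_K(dd^cw_j)^n$ are uniformly bounded. Replace one factor $\omega_0$ at a time by $dd^cw_j$, integrating by parts against a cut-off $\eta$. Each step trades $(-w_j)^{m}\,dw_j\wedge d^cw_j\wedge(dd^cw_j)^{k}\wedge\omega_0^{n-k-1}$ for the analogous term with $m-1$ and $k+1$. The errors carry derivatives of $\eta$ and are absorbed by the Cauchy--Schwarz inequality for positive currents.
\item Next prove that the weak limit does not depend on the sequence. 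Take two admissible sequences $w_j,\widetilde w_j$ and compare them through $\max(w_j,\widetilde w_k-\delta)$. The difference $(dd^cw_j)^n-(dd^c\widetilde w_k)^n$ telescopes into terms $dd^c(w_j-\widetilde w_k)\wedge T$ with $T$ a mixed power. Integrating by parts once and applying Cauchy--Schwarz bounds this by
\[
\Bigl(\int|d(w_j-\widetilde w_k)|^2\text{-type energies}\Bigr)^{1/2}
\]
times uniformly bounded mixed energies.
\item The weights $(-w_j)^{n-k-2}$ are exactly what makes these mixed energies finite. I expect the key point to be that $\int(w_j-\widetilde w_k)\,dd^c\eta\wedge\cdots$ tends to $0$ by monotone convergence. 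This gives a unique limit measure $\mu$.
\end{enumerate}

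\textbf{Necessity.} Suppose $w\in\mathcal D$. I would argue by contradiction. If the $k$-th energy blows up along some decreasing approximation, construct a second approximation, e.g.\ $\max(w_j,c_j\log|z-x|)$ or a rescaled subsequence. By an inductive lower bound, the mass $\int(dd^c\cdot)^n$ near a point is bounded below by the $k$-th energy, so the Monge--Amp\`ere masses of this approximation diverge or oscillate. This contradicts convergence to a common $\mu$. Applying this argument to an arbitrary approximation yields the clause that the bounds hold for every local smooth decreasing negative psh approximation.

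\textbf{The case $\mathcal D=\mathcal E$.} On a bounded hyperconvex domain, combine the above with Cegrell's theory. For $\mathcal E\subset\mathcal D$, use that the Monge--Amp\`ere operator is continuous along decreasing sequences in $\mathcal E$. For $\mathcal D\subset\mathcal E$, glue $w$ locally with $A\rho$ and take $\max$ to produce $\mathcal E_0$ approximants with bounded total mass, the mass bound coming from the local energy estimates.

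\textbf{Main obstacle.} The hardest step is necessity: building a competing approximation whose Monge--Amp\`ere masses detect blow-up of a weighted gradient energy.
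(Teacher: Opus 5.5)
The paper gives no proof of this statement. It is imported from B{\l}ocki and used only as a black box in the proof of Theorem~\ref{prop:app-D-criterion}, which is the appropriate treatment, and you say as much. Read as a reconstruction, however, your sketch has genuine gaps exactly where the theorem has content.

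\textbf{Sufficiency.} The hypothesis gives energy bounds for \emph{one} approximating sequence near each point. Membership $w\in\mathcal D$, by contrast, requires that $(dd^c\widetilde w_k)^n$ converge to the same measure for \emph{every} smooth decreasing $\widetilde w_k$. Your step (b) bounds the difference using ``uniformly bounded mixed energies'', which presupposes bounds for $\widetilde w_k$ that are not assumed. Local weighted energies are not monotone under $w_j\le\widetilde w_k$, so nothing in your sketch transfers the bounds from $w_j$ to $\widetilde w_k$. Step (c) fails as well. The measures $dd^c\eta\wedge T$, with $T$ a mixed power of $dd^cw_j$ and $dd^c\widetilde w_k$, vary with $j$ and $k$, so monotone convergence is not available. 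Proving $\int(w_j-\widetilde w_k)\,dd^c\eta\wedge T\to0$ is precisely where the unboundedness of $w$ must be handled, together with the possible concentration of $(dd^cw)^n$ on $\{w=-\infty\}$, as for $\log|z|$.

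\textbf{Necessity and $\mathcal D\subset\mathcal E$.} The claimed inductive lower bound of a local Monge--Amp\`ere mass by the $k$-th energy is false. For $w=\log|z_1|$ on a small polydisc in $\mathbb C^n$, the approximants $\frac12\log(|z_1|^2+\varepsilon^2)$ satisfy $(dd^c\,\cdot\,)^n\equiv0$ by \eqref{eq:app-rank-one}. Yet their $k=0$ energies tend to $+\infty$, by Fatou's lemma and $\int_{-\infty}^{A}(-s)^{n-2}\,\mathrm{d}s=\infty$. Blow-up of energy is thus invisible to the masses. You do not construct a competing sequence whose masses detect it: $\max(w_j,c_j\log|z-x|)$ is not smooth, and no computation is given.

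Your $\mathcal D\subset\mathcal E$ gluing also fails. The functions $\max(w_j,A\rho)$ decrease to $\max(w,A\rho)\ne w$ near any point where $w$ is unbounded. Letting $A\to\infty$ instead makes the total mass that of $A\rho$ near $\partial\Omega$ when, say, $w\le-1$, namely $A^n\int_\Omega(dd^c\rho)^n$, which is unbounded.

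\textbf{What is missing} is a global ingredient. Take $u\in\mathcal E_0(B)$ with $B=B(x,R)$, and $\psi=(|z-x|^2-R^2)/R^2$. Integration by parts then has no boundary terms, and
\[
\int_B(-u)^m\,du\wedge d^cu\wedge S=\frac{1}{m+1}\int_B(-u)^{m+1}dd^cu\wedge S,
\qquad
\int_B(-u)^q\,dd^c\psi\wedge T\le q\int_B(-u)^{q-1}dd^cu\wedge T,
\]
the second because $-1\le\psi\le0$. Iterating these bounds every energy in \eqref{eq:app-blocki-energy} by $C\int_B(dd^cu)^n$.

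Combine this with two further tools:
\begin{enumerate}
\item the monotonicity of total mass in Cegrell's classes, $u\le v\Rightarrow\int(dd^cu)^n\ge\int(dd^cv)^n$;
\item a localization agreeing with $w$ on a compact set $K$, such as the envelope $\sup\{v\in\operatorname{PSH}^-(B):v\le w\text{ on }K\}$.
\end{enumerate}
Together these are what let bounds pass between different approximating sequences and link $\mathcal D$ with $\mathcal E$. Without them, neither the ``every approximation'' clause nor necessity follows from your plan.
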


\begin{proof}[Proof of Theorem \ref{prop:app-D-criterion}]
For sufficiently small $\varepsilon>0$, set
\[
v_\varepsilon
=\chi\!\left(\frac12\log(|f|^2+\varepsilon^2)\right).
\]
On every relatively compact subdomain of $U$, these functions
are well defined, smooth, negative, and plurisubharmonic for
all sufficiently small $\varepsilon$. Moreover,
$v_\varepsilon\downarrow v$ as $\varepsilon \rightarrow 0$,
with convergence in $C^1$ on compact subsets of
$U\setminus Z(f)$.

Suppose first that $v\in\mathcal D(U)$.
Theorem~\ref{thm:app-blocki} with $k=0$, together with
\eqref{eq:app-energy-trace} and Fatou's lemma, implies
\[
\int_{K\setminus Z(f)}
(-v)^{n-2}|\nabla v|^2\,\mathrm{d}V<\infty
\]
for every $K\Subset U$.
Near a generic smooth point of a divisor component, choose a
holomorphic coordinate in which $f=w_1^m$.
Since a holomorphic coordinate change preserves this local
integrability, integration in the normal variable $r=|w_1|$
yields
\[
\int_0^{r_0}
[-\chi(m\log r)]^{n-2}
\chi'(m\log r)^2\,\frac{\mathrm{d}r}{r}<\infty.
\]
The substitution $s=m\log r$ proves the required condition.

Conversely, assume that the integral in
\eqref{eq:app-D-criterion} is finite.
Direct differentiation gives
\[
dv_\varepsilon\wedge d^cv_\varepsilon
=
\frac{|f|^2}
     {4(|f|^2+\varepsilon^2)^2}
\chi'\!\left(\frac12\log(|f|^2+\varepsilon^2)\right)^2
\,i\,df\wedge d\bar f
\]
and
\begin{align*}
dd^cv_\varepsilon
={}&
\left[
\frac{|f|^2}
     {4(|f|^2+\varepsilon^2)^2}
\chi''\!\left(\frac12\log(|f|^2+\varepsilon^2)\right)
\right.\\
&\left.\qquad+
\frac{\varepsilon^2}
     {2(|f|^2+\varepsilon^2)^2}
\chi'\!\left(\frac12\log(|f|^2+\varepsilon^2)\right)
\right]i\,df\wedge d\bar f.
\end{align*}
Both forms are scalar multiples of $i\,df\wedge d\bar f$,
whose square vanishes. Therefore, since $n\ge2$,
\begin{equation}\label{eq:app-rank-one}
dv_\varepsilon\wedge d^cv_\varepsilon
\wedge(dd^cv_\varepsilon)^k=0
\quad(k\ge1),
\qquad
(dd^cv_\varepsilon)^n=0.
\end{equation}

It remains to verify the energy bound for $k=0$.
Fix $K\Subset U$. By the coarea formula and
Proposition~\ref{prop:first-energy},
\begin{align*}
&\int_{K\cap\{|f|<r_0\}}
(-v_\varepsilon)^{n-2}
|\nabla v_\varepsilon|^2\,\mathrm{d}V\\
&\quad\le C\int_0^{r_0}
\left[-\chi\!\left(\frac12\log(r^2+\varepsilon^2)\right)\right]^{n-2}
\chi'\!\left(\frac12\log(r^2+\varepsilon^2)\right)^2
\frac{r^3\,\mathrm{d}r}{(r^2+\varepsilon^2)^2}\\
&\quad\le C\int_{-\infty}^{A}
(-\chi(s))^{n-2}\chi'(s)^2\,\mathrm{d}s.
\end{align*}
For the last inequality, use
$s=\frac12\log(r^2+\varepsilon^2)$, for which
\[
\frac{r^3}{(r^2+\varepsilon^2)^2}\,\mathrm{d}r
=
\frac{r^2}{r^2+\varepsilon^2}\,\mathrm{d}s
\le\mathrm{d}s.
\]
Here $C$ is independent of $\varepsilon$.
The contribution from $K\cap\{|f|\ge r_0\}$ is uniformly
bounded, since $v_\varepsilon\to v$ in $C^1$ there.
Together with \eqref{eq:app-energy-trace} and
\eqref{eq:app-rank-one}, this verifies all the bounds in
\eqref{eq:app-blocki-energy}.
From Theorem~\ref{thm:app-blocki} we therefore have
$v\in\mathcal D(U)$.

We now prove the two vanishing assertions.
Since $v_\varepsilon$ decreases to $v\in\mathcal D(U)$,
continuity under decreasing smooth approximation implies
\[
(dd^cv_\varepsilon)^n\rightharpoonup(dd^cv)^n.
\]
The left-hand side vanishes identically by
\eqref{eq:app-rank-one}, so $(dd^cv)^n=0$.

Furthermore, since $-\chi\ge2$,
\[
\int_{-\infty}^{A}\chi'(s)^2\,\mathrm{d}s
\le 2^{2-n}
\int_{-\infty}^{A}
(-\chi(s))^{n-2}\chi'(s)^2\,\mathrm{d}s<\infty.
\]
As $\chi'$ is nonnegative and non-decreasing, this implies
$\lim_{s\to-\infty}\chi'(s)=0$.
Consequently, for every $\delta>0$, there exists a constant
$C_\delta$ such that
\[
\chi(s)\ge\delta s-C_\delta,
\qquad s\le A.
\]
Fix $x\in Z(f)$. Since
$v\ge\delta\log|f|-C_\delta$ and
$\log|f(z)|\to-\infty$ as $z\to x$, we have
\[
\limsup_{\substack{z\to x\\z\notin Z(f)}}
\frac{v(z)}{\log|f(z)|}\le\delta.
\]
Apply the second comparison theorem
\cite[Chapter~III, Theorem~7.8]{Demailly}
with $q=1$, $T=1$, $u_1=\log|f|$, $v_1=v$,
and $\varphi(z)=\log|z-x|$.
The required condition on the unbounded loci holds because
both functions are locally bounded outside the analytic
hypersurface $Z(f)$. We obtain
\[
0\le\nu(v,x)
\le\delta\,\nu(\log|f|,x)
=\delta\,\operatorname{ord}_x(f),
\]
where the last equality follows from
\cite[Chapter~III, \S6.10]{Demailly}.
Letting $\delta\rightarrow0$, then we have $\nu(v,x)=0$.
At points outside $Z(f)$, the same conclusion holds
because $v$ is smooth. The proof is completed.
\end{proof}

\end{document}